\documentclass[aop,preprint]{imsart}

\RequirePackage[OT1]{fontenc}
\RequirePackage{amsthm,amsmath}
\usepackage{amsfonts}
\RequirePackage[numbers]{natbib}
\RequirePackage[colorlinks,citecolor=blue,urlcolor=blue]{hyperref}

\usepackage{graphicx}
\usepackage{tikz}
\usepackage{array}
%\usepackage{bbold}
%\usepackage{mnsymbol}
%\usepackage{chngcntr}
%\usepackage{apptools}
%\AtAppendix{\counterwithin{lemma}{section}}
\usetikzlibrary{calc, intersections}
\usetikzlibrary{decorations.markings}
\usetikzlibrary{decorations.pathreplacing}
\input xy
\xyoption{all}

% settings
%\pubyear{2005}
%\volume{0}
%\issue{0}
%\firstpage{1}
%\lastpage{8}
\arxiv{arXiv:1605.05819}

\startlocaldefs
\numberwithin{equation}{section}

\theoremstyle{plain}
\newtheorem{theorem}{Theorem}[section]
\newtheorem{lemma}[theorem]{Lemma}
\newtheorem{proposition}[theorem]{Proposition}
\newtheorem{corollary}[theorem]{Corollary}

\theoremstyle{definition}
\newtheorem{definition}[theorem]{Definition}

\newtheorem{notation}[theorem]{Notation}

\theoremstyle{remark}
\newtheorem{remark}[theorem]{Remark}
\newtheorem{example}[theorem]{Example}

\newtheorem{assumption}[theorem]{Assumption}
\endlocaldefs

% Extra commands
\newcommand{\grad}{{\mathrm{grad}}}
\newcommand{\diag}{{\mathrm{diag}}}
\newcommand{\Ric}{{\mathrm{Ric}}}
\newcommand{\ppi}{\boldsymbol{\pi}}
\newcommand{\Hess}{{\mathrm{Hess}} \nobreak\hspace{.16667em plus .08333em} }

\begin{document}

\begin{frontmatter}
\title{Exponentially concave functions and a new information geometry}
\runtitle{Exponentially concave functions}
%\thankstext{T1}{Footnote to the title with the ``thankstext'' command.}

\begin{aug}
\author{\fnms{Soumik} \snm{Pal}\thanksref{t2,m1}\ead[label=e1]{soumikpal@gmail.com}}
\and
\author{\fnms{Ting-Kam Leonard} \snm{Wong}\thanksref{t2,m2}\ead[label=e2]{tkleonardwong@gmail.com}}
%\author{\fnms{Third} \snm{Author}\thanksref{t1,m2}
%\ead[label=e3]{third@somewhere.com}
%\ead[label=u1,url]{http://www.foo.com}}

%\thankstext{t1}{Some comment}
\thankstext{t2}{This research is partially supported by NSF grants DMS-1308340 and DMS-1612483.}
%\thankstext{t3}{Second supporter of the project}
\runauthor{S.~Pal and T.-K.~L.~Wong}

\affiliation{University of Washington\thanksmark{m1} and University of Southern California\thanksmark{m2}}

\address{University of Washington\\C-547 Padelford Hall\\Seattle, Washington 98195\\USA\\
\printead{e1}}

\address{University of Southern California\\406H Kaprielian Hall\\Los Angeles, California 90089\\USA\\
\printead{e2}}
\end{aug}

\begin{abstract}
A function is exponentially concave if its exponential is concave. We consider exponentially concave functions on the unit simplex. In a previous paper we showed that gradient maps of exponentially concave functions provide solutions to a Monge-Kantorovich optimal transport problem and give a better gradient approximation than those of ordinary concave functions. The approximation error, called L-divergence, is different from the usual Bregman divergence.
 Using tools of information geometry and optimal transport, we show that L-divergence induces a new information geometry on the simplex consisting of a Riemannian metric and a pair of dually coupled affine connections which defines two kinds of geodesics. We show that the induced geometry is dually projectively flat but not flat. Nevertheless, we prove an analogue of the celebrated generalized Pythagorean theorem from classical information geometry. On the other hand, we consider displacement interpolation under a Lagrangian integral action that is consistent with the optimal transport problem and show that the action minimizing curves are dual geodesics. The Pythagorean theorem is also shown to have an interesting application of determining the optimal trading frequency in stochastic portfolio theory.
\end{abstract}

\begin{keyword}[class=MSC]
\kwd[Primary ]{60E05}
%\kwd{60K35}
\kwd[; secondary ]{52A41}
\end{keyword}

\begin{keyword}
\kwd{information geometry}
\kwd{optimal transport}
\kwd{exponential concavity}
\kwd{L-divergence}
\kwd{generalized Pythagorean theorem}
\kwd{functionally generated portfolio}
\kwd{stochastic portfolio theory}
\end{keyword}

\end{frontmatter}

%%%%%%%%%%%%%%%%%%%%
% Main text %%%%%%%%%%%%%%
%%%%%%%%%%%%%%%%%%%%

\section{Introduction} \label{sec:intro}

\begin{definition} [Exponential concavity] \label{def:exp.concave}
Let $\Omega \subset {\mathbb{R}}^d$ be convex. We say that a function $\varphi: \Omega \rightarrow {\mathbb{R}} \cup \{-\infty\}$ is exponentially concave if $\Phi = e^{\varphi}$ is concave. (By convention we set $e^{-\infty} = 0$.)
\end{definition}

Throughout this paper we let $\Omega$ be the open unit simplex
\begin{equation} \label{eqn:simplex}
\Delta_n = \left\{ p = (p_1, \ldots, p_n) \in {\mathbb{R}}^n: p_i > 0, \sum_{i = 1}^n p_i = 1 \right\},
\end{equation}
regarded as the collection of strictly positive probability distributions on a set with $n$ elements. This is due to the applications we have in mind, although many generalizations are possible. An interesting property of exponentially concave functions is that their gradient maps give a better first-order approximation those of than ordinary concave functions. In \cite{PW14}, we introduced the concept of L-divergence. Let $\varphi: \Delta_n \rightarrow {\mathbb{R}}$ be a differentiable exponentially concave function. For $p, q \in \Delta_n$, concavity of $e^{\varphi}$ implies that 
\begin{equation} \label{eq:loglinear}
\varphi(p) + \log\left( 1+ \nabla \varphi(p) \cdot (q - p) \right) \geq \varphi(q),
\end{equation}
where $\nabla \varphi$ is the Euclidean gradient. Clearly this approximation is sharper than the linear approximation of $\varphi$ itself. The L-divergence of $\varphi$ is the error in this approximation:
\begin{equation} \label{eqn:L.divergence}
T\left(q \mid p\right) := \log \left(1 + \nabla \varphi(p) \cdot (q - p) \right) - \left(\varphi(q) - \varphi(p)\right) \geq 0.
\end{equation}

The extra concavity of exponentially concave functions have found several recent applications in analysis, probability and optimization. For example, in \cite{EKS15}, the equivalence of entropic curvature-dimension conditions and Bochner's inequality on metric measure spaces is established using the notion of $(K,N)$ convexity. When $K \geq 0$ and $N < \infty$, the negative of a $(K,N)$ convex function is exponentially concave. Better gradient approximation has also led to better algorithms in optimization and machine learning such as those in \cite{HAK07, JRP08, MZJ15}, although the authors tend to replace the logarithmic term in \eqref{eq:loglinear} by a quadratic approximation. 

One of our primary applications in mind is related to stochastic portfolio theory. In \cite{F02, F99}, the author considers the gradient map of an exponentially concave function as a map from $\Delta_n$ to its closure $\overline{\Delta}_n$. The following restatement can be found in \cite[Proposition 6]{PW14}. Let $\varphi$ be a differentiable exponentially concave function on $\Delta_n$. For $p\in \Delta_n$, define $\ppi(p)\in {\mathbb{R}}^n$ by  
\begin{equation} \label{eqn:fgp.weights}
\ppi_i(p) = p_i \left(1 + \nabla \varphi(p) \cdot (e^{(i)} - p)\right), \quad i = 1, \ldots, n,
\end{equation}
where $e^{(i)} = (0, \ldots, 1, \ldots, 0)$ is the $i$th standard basis of ${\mathbb{R}}^n$. Then, it can be shown that $\ppi: \Delta_n \rightarrow \overline{\Delta}_n$. In keeping with standard definitions in the subject we will call this map the portfolio map. In this vein also see articles \cite{BF08, F01, FKK05, KR16, P16, S14, W14, W15}.

The L-divergence of $\varphi$ should be distinguished from the Bregman divergence of $\varphi$ defined by
\begin{equation} \label{eqn:Bregman.divergence}
D\left(q \mid p\right) := \nabla \varphi(p) \cdot (q - p) - \left(\varphi(q) - \varphi(p)\right).
\end{equation}
Bregman divergence was introduced in \cite{B67} and is widely applied in statistics and optimization. To see the difference consider two fundamental examples. For $q, p\in \Delta_n$, the Kullback-Leibler divergence (also known as relative entropy) is given by 
\[
H\left(q \mid p\right) = \sum_{i = 1}^n q_i \log \frac{q_i}{p_i}.   
\]
It can be shown that the relative entropy is the Bregman divergence of the Shannon entropy $H(p) = -\sum_{i = 1}^n p_i \log p_i$. On the other hand, fix $\pi \in \Delta_n$ and consider the cross entropy $\varphi(p)= \sum_{i=1}^n \pi_i \log p_i$. This is an exponentially concave function of $p$ whose associated portfolio map \eqref{eqn:fgp.weights} is constant: $\ppi(p) \equiv \pi$. The corresponding L-divergence is given by 
\begin{equation} \label{eqn:excess.growth}
T\left( q\mid p \right) = \log\left( \sum_{i=1}^n \pi_i \frac{q_i}{p_i}  \right)- \sum_{i=1}^n \pi_i \log\left( \frac{q_i}{p_i} \right).
\end{equation}
This quantity is sometimes referred to as the free energy in statistical physics. In finance it is called the diversification return in \cite{BF92, CZ14, EH06, W11}, the excess growth rate in \cite{F02, FS82, PW13}, the rebalancing premium in \cite{BNPS12}, and the volatility return in \cite{H14}.

In \cite{PW14} we introduced a Monge-Kantorovich optimal transport problem on ${\mathbb{R}}^{n-1}$ which can be solved using exponentially concave functions on the unit simplex. The cost function is defined for $(\theta, \phi)\in {\mathbb{R}}^{n-1} \times {\mathbb{R}}^{n-1}$ by
\begin{equation} \label{eqn:psi.intro}
c(\theta, \phi):=\psi(\theta-\phi), \quad \text{where}\quad \psi(x) := \log \left(1 + \sum_{i = 1}^{n-1} e^{x_i}\right)
\end{equation}
is strictly convex on ${\mathbb{R}}^{n-1}$. We will recall the details of this transport problem in Section \ref{sec:exponential.coordinate} and its relationship to exponentially concave functions. It suffices to say for now that, given a pair of Borel probability measures $P$ and $Q$ on ${\mathbb{R}}^{n-1}$ the optimal coupling of the two with respect to the above cost can be expressed in terms of the portfolio map of an exponentially concave function on the simplex. A related cost function appears in \cite{O07} in the completely different context of finding polytopes with given geometric data. It also appears to be related to the study of moment measures as introduced in \cite{CEK15} (see page 3836 in particular).  

\subsection{Our contributions}
In this paper we show that information geometry provides an elegant geometric structure underlying exponential concavity, L-divergence and the optimal transport problem. Here is a motivating question which is the starting point of this work. Suppose $\varphi$ is an exponentially concave function on $\Delta_n$ with its associated L-divergence $T(\cdot\mid \cdot)$. Can we geometrically characterize triplets $(p, q, r) \in \left(\Delta_n\right)^3$ such that $T(q\mid p)+ T(r\mid q)\le T(r\mid p)$? The answer to this question determines the optimal frequency of rebalancing the portfolio generated by $\varphi$ (see Section \ref{sec:application.fgp}). Also see Section \ref{sec:pqr.transport} for a transport interpretation of this inequality. 

Using tools of information geometry, we show that exponentially concave functions on $\Delta_n$ and their L-divergences induce a new geometric structure on the simplex $\Delta_n$ regarded as a smooth manifold of probability distributions. Let $\varphi$ be an exponentially concave function on $\Delta_n$. We only require that $\varphi$ is smooth and the Euclidean Hessian of $e^{\varphi}$ is strictly positive definite everywhere (see Assumption \ref{ass:regularity}). The induced geometric structure consists of a Riemannian metric $g$ and a dual pair of affine connections $\nabla$ and $\nabla^*$. These connections define via parallel transports two kinds of geodesic curves on $\Delta_n$ called primal and dual geodesics. Interestingly, the duality in this geometry goes hand in hand with the duality in the related Monge-Kantorovich optimal transport problem, and this work is the first which exploits this connection. We summarize our main results as follows. First we give the answer of the motivating question.

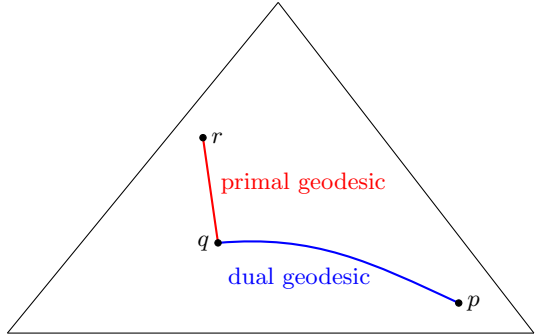
\begin{figure}[t]
\begin{tikzpicture}[scale = 0.4]
%draw triangle
\draw (3.5, -2) to (-14, -2);
\draw (3.5, -2) to (-5, 9);
\draw (-14, -2) to (-5, 9);

%draw geodesics
\draw[thick, blue] (1, -1) to[out=155, in=5] (-7, 1);  % p to q
\draw[thick, red] (-7, 1) -- (-7.5, 4.5);  % q to r

\draw[black, fill] (1, -1) circle (3pt);
\node [right] at (1, -1) {$p$};
\draw[black, fill] (-7, 1) circle (3pt);
\node [left] at (-7, 1) {$q$};
\draw[black, fill] (-7.5, 4.5) circle (3pt);
\node [right] at (-7.5, 4.5) {$r$};

\node [blue, above] at (-4.3, -0.8) {{\footnotesize dual geodesic}};
\node [red, right] at (-7.2, 3) {{\footnotesize primal geodesic}};

\end{tikzpicture}
\caption{Generalized Pythagorean theorem for L-divergence.} \label{fig:pyth}
\end{figure}

\begin{theorem} [Generalized Pythagorean theorem] \label{thm:pyth}
Given $(p, q, r) \in \left(\Delta_n\right)^3$, consider the dual geodesic joining $q$ and $p$ and the primal geodesic joining $q$ and $r$. Consider the Riemannian angle between the geodesics at $q$. (See Proposition \ref{prop:metric} which expresses the Riemannian metric $g$ as a normalized Euclidean Hessian of $\Phi := e^{\varphi}$.) Then the difference
\begin{equation} \label{eqn:pyth}
T\left(q \mid p\right) + T\left(r \mid q\right) - T\left(r \mid p\right)
\end{equation}
is positive, zero or negative depending on whether the angle is less than, equal to, or greater than $90$ degrees (see Figure \ref{fig:pyth}). 
\end{theorem}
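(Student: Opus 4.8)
The plan is to split the argument into an elementary algebraic identity, a monotonicity step that reduces the sign of~\eqref{eqn:pyth} to the sign of a single linear functional, and a geometric identification of that functional via the metric $g$ and the two geodesics. First I would substitute the definition~\eqref{eqn:L.divergence} of the L-divergence into the combination~\eqref{eqn:pyth}. The six occurrences of $\varphi$ cancel in pairs, leaving
\begin{equation*}
T(q \mid p) + T(r \mid q) - T(r \mid p) = \log \frac{\bigl(1 + \nabla\varphi(p)\cdot(q-p)\bigr)\bigl(1 + \nabla\varphi(q)\cdot(r-q)\bigr)}{1 + \nabla\varphi(p)\cdot(r-p)} .
\end{equation*}
The content of this reduction is that the Pythagorean defect depends on $\varphi$ only through its gradients at $p$ and at $q$, and on $r$ only through the affine quantities $\nabla\varphi(p)\cdot(r-p)$ and $\nabla\varphi(q)\cdot(r-q)$.

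Next, fixing $p$ and $q$, I would move $r$ along the primal geodesic from $q$ to $r$; as a subset of the simplex this is the Euclidean segment $\gamma(t) = q + t(r-q)$, $t\in[0,1]$ (the primal connection being projectively flat in the simplex coordinates), so its initial velocity at $q$ is a positive multiple of $r-q$ and it suffices to work with this parametrization. With $a = \nabla\varphi(p)\cdot(q-p)$, $b = \nabla\varphi(q)\cdot(r-q)$, $d = \nabla\varphi(p)\cdot(r-q)$, the identity above gives $T(q\mid p)+T(\gamma(t)\mid q)-T(\gamma(t)\mid p) = \log(1+a)+\log(1+tb)-\log(1+a+td)$, whose $t$-derivative is
\begin{equation*}
\frac{(1+a)\,b-d}{\bigl(1+\nabla\varphi(q)\cdot(\gamma(t)-q)\bigr)\bigl(1+\nabla\varphi(p)\cdot(\gamma(t)-p)\bigr)} .
\end{equation*}
Since $\Phi=e^{\varphi}$ is concave and strictly positive on $\Delta_n$, its tangent plane at any point of $\Delta_n$ dominates $\Phi$, hence $1+\nabla\varphi(x)\cdot(y-x)>0$ for all $x,y\in\Delta_n$; as $\gamma(t)\in\Delta_n$, both factors in the denominator stay positive on $[0,1]$. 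The derivative therefore keeps the constant sign of $(1+a)b-d$, and since the expression vanishes at $t=0$, the defect~\eqref{eqn:pyth} has exactly that sign. Dividing by $1+a>0$, this is the sign of $\bigl(\nabla\varphi(q)-\tfrac{\nabla\varphi(p)}{1+\nabla\varphi(p)\cdot(q-p)}\bigr)\cdot(r-q)$, i.e. the sign of the derivative at $q$, in the direction of the primal geodesic, of $r'\mapsto T(q\mid p)+T(r'\mid q)-T(r'\mid p)$.

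To finish I would identify this linear functional as a Riemannian inner product. The vector $r-q$ is a positive multiple of the initial velocity at $q$ of the primal geodesic toward $r$. For the covector $w\mapsto\bigl(\nabla\varphi(q)-\nabla\varphi(p)/(1+\nabla\varphi(p)\cdot(q-p))\bigr)\cdot w$ on $T_q\Delta_n$, I would appeal to the explicit description of the dual connection and its geodesics obtained earlier, which identifies this covector with $g_q(u,\cdot)$ for a positive multiple $u$ of the initial velocity at $q$ of the dual geodesic from $q$ toward $p$. The consistency of this identification is visible from a first-order expansion in $p-q$: one gets $\nabla\varphi(q)-\nabla\varphi(p)/(1+\nabla\varphi(p)\cdot(q-p)) = -\tfrac{1}{\Phi(q)}D^2\Phi(q)\,(p-q)+O(|p-q|^2)$, whose leading term represents $g_q(p-q,\cdot)$ by Proposition~\ref{prop:metric}. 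Combining, the sign of~\eqref{eqn:pyth} equals the sign of $g_q$ evaluated on the dual-geodesic velocity at $q$ (toward $p$) and the primal-geodesic velocity at $q$ (toward $r$), i.e. the sign of the cosine of the Riemannian angle between them; this is exactly the assertion of Theorem~\ref{thm:pyth}.

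The algebraic reduction and the monotonicity estimate are short and robust. The delicate point is the geometric identification in the last step: one must be sure that the explicit covector $\nabla\varphi(q)-\nabla\varphi(p)/(1+\nabla\varphi(p)\cdot(q-p))$ is a genuinely \emph{positive} multiple of the $g_q$-image of the dual-geodesic velocity, which presupposes that the dual affine connection and its geodesics have been set up with the correct orientations and normalizations. As a fallback one can run the monotonicity argument in the variable $p$ instead: a direct computation gives $\partial_p\bigl[T(q\mid p)+T(r\mid q)-T(r\mid p)\bigr]\big|_{p=q} = \tfrac{1}{1+b}\bigl(-\tfrac{1}{\Phi(q)}D^2\Phi(q)\bigr)(r-q)$, which displays the metric pairing with $r-q$ directly, at the price of having to establish monotonicity of the defect along the dual geodesic from $q$ to $p$.
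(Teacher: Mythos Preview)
Your argument is correct and takes a route genuinely different from the paper's. The paper works entirely in the exponential/portfolio coordinates: it shows by direct expansion that the sign of the Pythagorean defect equals that of $1-\sum_k \Pi_k(q,p)\Pi_k(r,q)/\ppi_k(q)$, and then, by a separate coordinate computation using the explicit Riemannian gradients (Lemma~\ref{lem:gradient}), shows that $\langle u,v\rangle$ equals the very same expression. Your proof instead stays in Euclidean simplex coordinates, uses the projective flatness of the primal connection to slide $r$ along the straight segment from $q$, and reduces the sign of the defect to the sign of a single linear functional evaluated on $r-q$. This bypasses the intermediate formula $1-\sum_k \Pi_k\Pi_k/\ppi_k$ entirely and is conceptually cleaner; the paper's approach, on the other hand, produces that explicit scalar identity for $\langle u,v\rangle$, which may be of independent interest.

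The step you flag as ``delicate'' is in fact a one-line observation, and you should state it rather than appeal vaguely to ``the explicit description of the dual connection''. Differentiating $T(q'\mid p)=\log(1+\nabla\varphi(p)\cdot(q'-p))-\varphi(q')+\varphi(p)$ at $q'=q$ gives
\[
d_{q'}T(q'\mid p)\big|_{q'=q}[w]=\Bigl(\tfrac{\nabla\varphi(p)}{1+\nabla\varphi(p)\cdot(q-p)}-\nabla\varphi(q)\Bigr)\cdot w,
\]
so your covector is exactly $-d_{q'}T(\cdot\mid p)(q)=g_q\bigl(-\grad\,T(\cdot\mid p)(q),\cdot\bigr)$ by the definition of the Riemannian gradient. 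The gradient-flow characterization (Theorem~\ref{thm:gradient.flows}(ii)) then says $-\grad\,T(\cdot\mid p)(q)$ is a \emph{positive} multiple of the initial velocity of the dual geodesic from $q$ to $p$, which is precisely the orientation statement you were worried about. With this in hand, the first-order expansion you give is only a consistency check and the ``fallback'' monotonicity in $p$ is unnecessary.
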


We also prove other remarkable properties of the geodesics: (i) There exist explicit coordinate systems under which the primal and dual geodesics are time changes of Euclidean straight lines (Theorem \ref{thm:geodesics}). In other words, the new geometry is dually projectively flat. In particular, the primal geodesics are Euclidean straight lines up to time reparameterization. Moreover, the primal and dual connections have constant sectional curvature $-1$ with respect to the Riemannian metric, and thus satisfy an Einstein condition (Corollary \ref{cor:Ricci}). The primal and dual geodesics can also be constructed as time changes of Riemannian gradient flows for the functions $T\left(r \mid \cdot\right)$ and $T\left( \cdot \mid p\right)$ (Theorem \ref{thm:gradient.flows}). This is remarkable because while the geodesic equations depend only on the local properties of $T\left(\xi \mid \xi'\right)$ near $\xi = \xi'$, the gradient flows are global as they involve the derivatives of $T\left(r \mid \cdot\right)$ and $T\left(\cdot \mid p\right)$. Indeed, this relation is known only for limited families of divergence including Bregman divergence and $\alpha$-divergence \cite{AA15}.

As shown in \cite[Chapter 1]{A16}, the generalized Pythagorean theorem holds for any Bregman divergence which induces a dually flat geometry. We will prove that the resulting geometries from L-divergences are not flat for $n \geq 3$ (Theorem \ref{thm:RC.curvature}). While some extensions of the generalized Pythagorean theorem hold in certain non-flat spaces (see for example \cite[Theorem 4.5]{A16}), they involve some extra terms. To the best of our knowledge Theorem \ref{thm:pyth} is the first exact Pythagorean theorem that holds in a geometry which is not dually flat. The difference \eqref{eqn:pyth} can also be given an optimal transport interpretation (Section \ref{sec:pqr.transport}).

(ii) We extend the static transport problem \eqref{eqn:psi.intro} to a time-dependent transport problem with a corresponding convex Lagrangian action. In Theorem \ref{thm:displacement.interpolation} we show that the action minimizing curves are the (reparametrized) dual geodesics which, in addition, satisfy the intermediate time optimality condition. This allows for a consistent displacement interpolation formulation between probability measures on the unit simplex. Previously, such studies focused almost exclusively on the Wasserstein spaces corresponding to the cost functions $C(x, y) = d(x, y)^\alpha$ (here $d$ is a metric on a Polish space with suitable properties and $\alpha \ge 1$). Displacement interpolation and the related concept of displacement convexity were introduced in \cite{MC97} and in the thesis \cite{MCThesis}. These ideas have grown to be immensely important in classical Wasserstein transport with fundamental implications in geometry, physics, probability and PDE. See \cite[Chapter 7]{V08} for a thorough discussion. Our Lagrangian, although convex, is not superlinear, and, therefore, is not covered by the standard theory. However, we expect it to lead to many equally remarkable properties.  

These results suggest plenty of problems for further research. Generalizing Theorem \ref{thm:pyth} to more than three points is of interest in stochastic portfolio theory.  Displacement interpolation has become an extremely important topic in optimal transport theory. Extensions to Riemanninan manifolds, done in \cite{CEMCS01}, have led to new functional inequalities. In another vein, \cite{LV09} defines Ricci curvatures on metric measure spaces in terms of displacement interpolation and displacement convexity. We expect that the displacement interpolation in this paper will lead to a new Otto calculus (\cite[Chapter 15]{V08}) and related PDEs (such as Hamilton-Jacobi equations). It appears that Bregman divergence and L-divergence are only two of an entire family of divergences with special properties and corresponding optimal transport problems. For example, see \cite{P17} which extends the optimal transport problem \eqref{eqn:psi.intro} via the cumulant generating function of a general probability distribution. We also believe that this new information geometry will be useful in dynamic optimization problems where the objective function is multiplicative in time. Finally, it is naturally of interest to study exponential concavity on general convex domains. % L (Dec 18): cost function -> objective function

\subsection{Related literature} 
We have mentioned the L-divergence and the Bregman divergence. In general, a divergence on a set (usually a manifold of probability distributions) is a non-negative function $D\left(q \mid p\right)$ such that $D\left(q \mid p\right) = 0$ if and only if $p = q$. Divergences are not metrics in general since they may be asymmetric and may not satisfy the triangle inequality. Apart from Bregman divergence, many families of divergences (such as $\alpha$-divergence and $f$-divergence) have been applied in information theory, statistics and other areas; see the survey \cite{B13} for a catalog of these divergences. Among these divergences, Bregman divergence plays a special role because it induces a dually flat geometry on the underlying space. First studied in the context of exponential families in statistical inference by \cite{NA82}, it gave rise to information geometry -- the geometric study of manifolds of probability distributions. Furthermore, Bregman divergence enjoys properties such as the generalized Pythagorean theorem and projection theorem which led to numerous applications. See \cite{A16,  AN07, CU14, KV11, M93} for introductions to this beautiful theory. The related concept of dual affine connection is also useful in affine differential geometry (see \cite{DNV90, K90, S07}). In \cite{M99} dually projectively flat manifolds are characterized in terms of the Bartlett tensors and conformal flatness. Here we identify a new and important class of examples and show that they have concrete applications. 

This work is motivated by our study in mathematical finance. Recently optimal transport has been applied to financial problems such as robust asset pricing; see, for example, \cite{ABPS13, BHP13, DS14}. This line of work has a somewhat different flavor than ours although they share the same goal: development of model-free mathematical finance. Portfolios generated by exponentially concave functions generate profit due to fluctuations of a sequence in $\Delta_n$ representing the stock market. This idea is sometimes called volatility harvesting and leads naturally to the transport problem \eqref{eqn:psi.intro}, as shown in \cite{PW14}. In this philosophy, our work can be interpreted as developing a notion of model-free volatility.

\subsection{Outline of the paper}
In the next section we recall the optimal transport problem formulated in \cite{PW14} using the exponential coordinate system. Its relation with functionally generated portfolio is also reviewed. In Section \ref{sec:duality} we relate exponential concavity with $c$-concavity and give a transport-motivated definition of L-divergence. Here duality plays a crucial role. After reviewing the basic concepts of information geometry, we derive in Section \ref{sec:geometry.computation} the geometric structure induced by an exponentially concave function. The properties of this new geometry are then studied in Section \ref{sec:geodesic}. In particular, we characterize the primal and dual geodesics and prove the generalized Pythagorean theorem which has an interesting application in mathematical finance. Finally, in Section \ref{sec:interpolation} we apply the geometric structure to construct a displacement interpolation for the associated optimal transport problem. Some technical and computational details are gathered in the Appendix.

\section{Optimal transport and portfolio maps} \label{sec:optimal.transport}
In this section we recall the optimal transport problem in \cite{PW14} using the exponential coordinate system. We also review the definition of functionally generated portfolio and explain how it relates to the transport problem.

\subsection{Exponential coordinate system} \label{sec:exponential.coordinate}
The exponential coordinate system defines a global coordinate system on $\Delta_n$ regarded as an $(n - 1)$-dimensional smooth manifold \cite[Section 2.2]{A16}. %In statistical terminology $\Delta_n$ is an exponential family of probability distributions, and its natural parameter system is the exponential coordinate system \cite[Section 2.2]{A16}.

\begin{definition}[Exponential coordinate system]
The exponential coordinate $\theta = (\theta_1, \ldots, \theta_{n-1}) \in {\mathbb{R}}^{n-1}$ of $p \in \Delta_n$ is given by
\begin{equation} \label{eqn:exp.coordinate}
\theta_i = \log \frac{p_i}{p_n}, \quad i = 1, \ldots, n - 1.
\end{equation}
We denote this map by $\boldsymbol{\theta} : \Delta_n \rightarrow {\mathbb{R}}^{n-1}$. By convention we set $\theta_n \equiv 0$. The inverse transformation ${\bf p} := \boldsymbol{\theta}^{-1}$ is given by
\begin{equation} \label{eqn:theta.to.p}
p_i = {\bf p}_i(\theta) = e^{\theta_i - \psi(\theta)}, \quad 1 \leq i \leq n,
\end{equation}
where $\psi(\theta) = \log \left(1 + \sum_{i = 1}^{n-1} e^{\theta_i}\right) = \log \left(\sum_{i = 1}^n  e^{\theta_i}\right)$ as defined in \eqref{eqn:psi.intro}.
\end{definition}

The exponential coordinate system is the first of several coordinate systems we will introduce on the simplex. By changing coordinate systems, any function on $\Delta_n$ can be expressed as a function on ${\mathbb{R}}^{n-1}$ and vice versa. Explicitly, a function $\varphi$ on $\Delta_n$ can be expressed in exponential coordinates by $\theta \mapsto \varphi({\bf p}(\theta))$. To simplify the notations, we simply write $\varphi(p)$ or $\varphi(\theta)$ depending on the coordinate system used. For example, if $\varphi(p) = \sum_{i = 1}^n \pi_i \log p_i$ is the cross entropy where $\pi \in \Delta_n$, then $\varphi(\theta) = \sum_{i = 1}^{n-1} \pi_i \theta_i - \psi(\theta)$.

\subsection{The transport problem} \label{sec:the.transport.problem} We refer the reader to the books \cite{AG13, V08} for introductions to optimal transport and its interplay with analysis, probability and geometry. Let ${\mathcal{X}} = {\mathcal{Y}} = {\mathbb{R}}^{n-1}$ be equipped with the standard Euclidean metric and topology.  Let $P$ and $Q$ be Borel probability measures on ${\mathcal{X}}$ and ${\mathcal{Y}}$ respectively. By a coupling of $P$ and $Q$ we mean a Borel probability measure $R$ on ${\mathcal{X}} \times {\mathcal{Y}}$ whose marginals are $P$ and $Q$ respectively. Let $\Pi(P, Q)$ be the set of all couplings of $P$ and $Q$. This set is always non-empty as it contains the product measure $P \otimes Q$.

Given $P$ and $Q$ we consider the Monge-Kantorovich optimal transport problem with cost $c$ defined by \eqref{eqn:psi.intro}:
\begin{equation} \label{eqn:transport.problem}
\inf_{R \in \Pi(P, Q)} {\Bbb E} \left[ c(\theta, \phi)\right].
\end{equation}
Here the expectation is taken under the probability measure $R$ under which the random element $(\theta, \phi)$ has distribution $R$. If an optimal coupling takes the form $(\theta, F(\theta))$ for some measurable map $F: {\mathcal{X}} \rightarrow {\mathcal{Y}}$, we say that $F$ is a Monge transport map. 

In general, we may consider the optimal transport problem \eqref{eqn:transport.problem} with $c$ replaced by a general cost function denoted by $C(\cdot, \cdot)$ and ${\mathcal{X}}$, ${\mathcal{Y}}$ are general Polish spaces. The classical example is where ${\mathcal{X}} = {\mathcal{Y}}$ and $C$ is a power of the underlying metric: $C(x, y) = d(x, y)^{\alpha}$ (especially $\alpha = 1$ and $2$). For these costs rich and delicate theories have been developed on Euclidean spaces, Riemannian manifolds and geodesic metric measure spaces. However, we consider the cost function $c$ defined by \eqref{eqn:psi.intro}. 

\begin{remark}
The cost function
\[
\widetilde{c}(\theta, \phi) := \log\left(\frac{1}{n} + \frac{1}{n}\sum_{i = 1}^{n-1} e^{\theta_i - \phi_i}\right) - \sum_{i = 1}^{n-1} \frac{1}{n} (\theta_i - \phi_i)
\]
differs from $c(\cdot, \cdot)$ in a linear term which plays no role in optimal transport. Thus we may consider $\widetilde{c}$ instead. The advantage of $\widetilde{c}$ is that it is non-negative and, by Jensen's inequality, equals zero if and only if $\theta = \phi$. To be consistent with the notations in \cite{PW14} we will use the cost function $c$ in this paper.
\end{remark}

\begin{definition} [$c$-cyclical monotonicity]
A non-empty subset $A \subset {\mathcal{X}} \times {\mathcal{Y}}$ is $c$-cyclical monotone if and only if it satisfies the following property. For any finite collection $\{(\theta_j, \phi_j)\}_{j = 1}^m$ in $A$ and any permutation $\sigma$ of the set $\{1, 2, \ldots, m\}$, we have
\begin{equation} \label{eqn:cm}
\sum_{j = 1}^n c(\theta_j, \phi_j) \leq \sum_{j = 1}^m c(\theta_j, \phi_{\sigma(j)}).
\end{equation}
\end{definition}

It is well known that $c$-cyclical monotonicity is, under mild technical conditions, a necessary and sufficient solution criteria of the general optimal transport problem (see \cite[Chapter 1]{AG13}). In particular, a coupling $R$ of $(P, Q)$ is optimal if and only if the support of $R$ is $c$-cyclical monotone.

\subsection{Functionally generated portfolio}
At this point it is convenient to introduce the concept of functionally generated portfolio. Although it is possible to present the theory without reference to finance-motivated concepts, we stress that the portfolio map gives an additional structure to the transport problem not found in other cases. Also, the main examples of the theory as well as the key quantities (such as the induced Riemannian metric) are best expressed in terms of portfolios. Mathematically, the portfolio can be regarded as a normalized gradient of $\varphi$. In Section \ref{sec:application.fgp} we apply our information geometry to functionally generated portfolios.

Functionally generated portfolio was introduced in \cite{F99} and the following refined definition is taken from \cite{PW14}.

\begin{definition}[Functionally generated portfolio] \label{def:fgp}
By a portfolio map we mean a function $\ppi: \Delta_n \rightarrow \overline{\Delta}_n$. Let $\varphi: \Delta_n \rightarrow {\mathbb{R}}$ be exponentially concave. We say that a portfolio map $\ppi: \Delta_n \rightarrow \overline{\Delta}_n$ is generated by $\varphi$ if for any $p, q \in \Delta_n$ we have
\begin{equation} \label{eqn:fgp.def}
\sum_{i = 1}^n \ppi_i(p) \frac{q_i}{p_i} \geq e^{\varphi(q) - \varphi(p)}.
\end{equation}
We call $\varphi$ the log generating function of $\ppi$ and $\Phi := e^{\varphi}$ (which is positive and concave) the generating function. It is known that $\varphi$ is unique (for a given $\ppi$) up to an additive constant. If $\varphi$ is differentiable, then $\ppi$ is necessarily given by \eqref{eqn:fgp.weights}.
\end{definition}

Throughout this paper we impose the following regularity conditions on the exponentially concave function $\varphi$.

\begin{assumption} [Regularity conditions] \label{ass:regularity} {\ }
\begin{enumerate}
\item[(i)] The function $\varphi$ is smooth (i.e., infinitely differentiable) on $\Delta_n$. 
\item[(ii)] The (Euclidean) Hessian of $\Phi = e^{\varphi}$ is strictly negative definite everywhere on $\Delta_n$. In particular, $\Phi$ is strictly concave. Moreover, it can be shown that the function $\ppi$ defined by \eqref{eqn:fgp.weights} maps $\Delta_n$ into $\Delta_n$.
\end{enumerate}
\end{assumption}

Let us discuss these conditions briefly. Differentiability is needed to define differential geometric structures on $\Delta_n$ in terms of the derivatives of the L-divergence. Our theory requires the L-divergence $T\left(\cdot \mid \cdot\right)$ to be three times continuously differentiable, and for convenience we simply assume that $\varphi$ is smooth. Strict concavity guarantees that the L-divergence is non-degenerate, i.e., $T\left(q \mid p\right) = 0$ only if $q = p$, and strict positive definiteness of the Hessian implies that the induced Riemannian metric is non-degenerate.

Henceforth we let $\varphi: \Delta_n \rightarrow {\mathbb{R}}$ be an exponentially concave function satisfying Assumption \ref{ass:regularity} and let $\ppi: \Delta_n \rightarrow \Delta_n$ given by \eqref{eqn:fgp.weights} be the portfolio map generated by $\varphi$. The cost function $c$ always refers to the one defined in \eqref{eqn:psi.intro}, and a general cost function is denoted by $C$. Using \eqref{eqn:fgp.weights}, it can be shown that the L-divergence \eqref{eqn:L.divergence} of $\varphi$ can be expressed in the form
\begin{equation} \label{eqn:L.divergence.fgp}
T\left(q \mid p\right) = \log\left(\sum_{i = 1}^n \ppi_i(p) \frac{q_i}{p_i}\right) - \left(\varphi(q) - \varphi(p)\right).
\end{equation}

Now we give several examples of functionally generated portfolios and their log generating functions. Many more examples can be found in \cite[Chapter 3]{F02}. In particular, the constant-weighted portfolios play a special role and will be taken as the basic example of the theory.

\begin{example} [Examples of functionally generated portfolios] \label{ex:fgp.example} {\ }
\begin{enumerate}
\item[(i)] (Market portfolio) The identity map $\ppi(p)\equiv p$ is generated by the constant function $\varphi(p) \equiv 0$. (Here Assumption \ref{ass:regularity} does not hold.)
\item[(ii)] (Constant-weighted portfolio) The constant map $\ppi(p) \equiv \pi \in \Delta_n$ is generated by the cross-entropy $\varphi(p) = \sum_{i = 1}^n \pi_i \log p_i$. The special case $\pi = \left(\frac{1}{n}, \ldots, \frac{1}{n}\right)$ is called the equal-weighted portfolio. 
\item[(iii)] (Diversity-weighted portfolio) Let $\lambda \in (0, 1)$ be a fixed parameter. Consider the portfolio map $\ppi: \Delta_n \rightarrow \Delta_n$ defined by
\[
\ppi_i(p) = \frac{p_i^{\lambda}}{\sum_{j = 1}^n p_j^{\lambda}}, \quad i = 1, \ldots, n.
\]
It can be shown that the generating function is $\varphi(p) = \frac{1}{\lambda} \log \left(\sum_{j = 1}^n p_j^{\lambda}\right)$. 
\item[(iv)] (Convex combinations) It is known that the set of functionally generated portfolios is convex. Indeed, let $\ppi^{(1)}$ be generated by $\varphi^{(1)}$ and $\ppi^{(2)}$ be generated by $\varphi^{(2)}$. Then for $\lambda \in (0, 1)$, the portfolio map $\ppi = (1 - \lambda) \ppi^{(1)} + \lambda \ppi^{(2)}$ is generated by $\varphi = (1 - \lambda) \varphi^{(1)} + \lambda \varphi^{(2)}$.
Its generating function $\Phi = e^{\varphi}$ is then the geometric mean $\left(\Phi^{(1)}\right)^{1 - \lambda} \left(\Phi^{(2)}\right)^{\lambda}$. This fact was used in \cite{W14, W15} to formulate and study nonparametric estimation of functionally generated portfolio.
\end{enumerate}
\end{example}

The following result is taken from \cite{PW14}.

\begin{proposition}\label{prop:fgp.mcm}
For any portfolio map $\ppi: \Delta_n \rightarrow \Delta_n$ the following statements are equivalent.
\begin{enumerate}
\item[(i)] There exists an exponentially concave function $\varphi$ on $\Delta_n$ which generates $\ppi$ in the sense of \eqref{eqn:fgp.def}.
\item[(ii)] The portfolio map is multiplicatively cyclical monotone (MCM) in the following sense: for any sequence $\{\mu(t)\}_{t = 0}^{m + 1}$ in $\Delta_n$ satisfying $\mu(m + 1) = \mu(0)$, we have
\begin{equation*}
\prod_{t = 0}^m \left(\sum_{i = 1}^n \ppi_i(\mu(t)) \frac{\mu_i(t + 1)}{\mu_i(t)}\right) \geq 1.
\end{equation*}
\item[(iii)] Define a map $\theta \mapsto \phi$ by
\begin{equation} \label{eqn:portfolio.transport}
\phi_i = \theta_i - \log \frac{\ppi_i(\theta)}{\ppi_n(\theta)}, \quad i = 1, \ldots, n - 1.
\end{equation}
Here $\ppi$ is regarded as a function of the exponential coordinate. In words, we define $\phi$ in such a way that the exponential coordinate of $\ppi(\theta)$ is $\theta - \phi$. Then the graph of this map is $c$-cyclical monotone.
\end{enumerate}
\end{proposition}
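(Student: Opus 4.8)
The plan is to establish the cycle (i) $\Rightarrow$ (ii) $\Rightarrow$ (i) and the equivalence (ii) $\Leftrightarrow$ (iii), with the MCM condition serving as the hub. Write $L(p, q) := \log \sum_{i = 1}^n \ppi_i(p) \frac{q_i}{p_i}$; since $p_i, q_i > 0$ and $\sum_i \ppi_i(p) = 1$, this is finite for all $p, q \in \Delta_n$ and $L(p, p) = 0$, and (ii) says precisely that $\sum_{t = 0}^m L(\mu(t), \mu(t + 1)) \ge 0$ along every closed chain $\mu(0), \dots, \mu(m), \mu(m + 1) = \mu(0)$ in $\Delta_n$. The implication (i) $\Rightarrow$ (ii) is then immediate: \eqref{eqn:fgp.def} gives $\sum_i \ppi_i(\mu(t)) \frac{\mu_i(t + 1)}{\mu_i(t)} \ge e^{\varphi(\mu(t + 1)) - \varphi(\mu(t))}$, and multiplying over $t = 0, \dots, m$ the exponents telescope to $0$ because the chain is closed.

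For (ii) $\Rightarrow$ (i) I would run a Rockafellar-type potential construction. Fix $p_0 \in \Delta_n$ and set
\[
\varphi(q) := \inf \Bigl\{ \textstyle \sum_{t = 0}^m L(\mu(t), \mu(t + 1)) : m \ge 0, \ \mu(0) = p_0, \ \mu(m + 1) = q \Bigr\}.
\]
Closing up an arbitrary such chain by the extra step $q \to p_0$ and applying (ii) gives $\varphi(q) \ge -L(q, p_0) > -\infty$, while the length-one chain gives $\varphi(q) \le L(p_0, q) < \infty$, so $\varphi$ is a well-defined real-valued function on $\Delta_n$. Concatenating any chain from $p_0$ to $p$ with the step $p \to q$ shows $\varphi(q) \le \varphi(p) + L(p, q)$, i.e., with $\Phi := e^{\varphi}$,
\[
\Phi(q) \le \Phi(p) \sum_{i = 1}^n \ppi_i(p) \frac{q_i}{p_i} = \sum_{i = 1}^n \frac{\Phi(p) \ppi_i(p)}{p_i} \, q_i =: \ell_p(q) .
\]
For each $p$ the function $q \mapsto \ell_p(q)$ is the restriction to $\Delta_n$ of a linear function of $q$, and it agrees with $\Phi$ at $q = p$ because $\sum_i \ppi_i(p) = 1$; hence $\Phi = \inf_{p \in \Delta_n} \ell_p$ on $\Delta_n$, an infimum of affine functions, and is therefore concave. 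Thus $\varphi$ is exponentially concave, and the displayed bound is exactly \eqref{eqn:fgp.def}, so $\varphi$ generates $\ppi$.

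For (ii) $\Leftrightarrow$ (iii) the bridge is an explicit formula for the cost. With $\mu(a) := {\bf p}(\theta_a)$, $\mu(b) := {\bf p}(\theta_b)$, and $\phi_b$ the image of $\theta_b$ under \eqref{eqn:portfolio.transport}, substituting $\theta_{a, i} - \phi_{b, i} = (\theta_{a, i} - \theta_{b, i}) + \log \frac{\ppi_i(\mu(b))}{\ppi_n(\mu(b))}$ into $c(\theta_a, \phi_b) = \psi(\theta_a - \phi_b)$ and using ${\bf p}_i(\theta) = e^{\theta_i - \psi(\theta)}$, $\mu_n = e^{-\psi(\theta)}$ gives
\[
c(\theta_a, \phi_b) = \log \mu_n(b) - \log \mu_n(a) - \log \ppi_n(\mu(b)) + \log \sum_{i = 1}^n \ppi_i(\mu(b)) \frac{\mu_i(a)}{\mu_i(b)} .
\]
For a finite family $\{\theta_j\}_{j = 1}^m$ and a permutation $\sigma$, summing this identity over $j$ (with $a = j$, $b = \sigma(j)$) makes the $\log \mu_n(\cdot)$ and $\log \ppi_n(\cdot)$ contributions cancel against those of $\sigma = \mathrm{id}$, since each set of values is merely rearranged; hence $\sum_j c(\theta_j, \phi_{\sigma(j)}) - \sum_j c(\theta_j, \phi_j) = \sum_j \log \sum_i \ppi_i(\mu(\sigma(j))) \frac{\mu_i(j)}{\mu_i(\sigma(j))}$. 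Therefore $c$-cyclical monotonicity of the graph is equivalent to $\prod_j \bigl( \sum_i \ppi_i(\mu(\sigma(j))) \frac{\mu_i(j)}{\mu_i(\sigma(j))} \bigr) \ge 1$ for all finite families in $\Delta_n$ and all $\sigma$. Writing $\sigma^{-1}$ as a product of disjoint cycles factors this product into blocks, each of exactly the form in (ii); so the condition holds for all $\sigma$ iff it holds for every single cycle, which is (ii). Conversely, specializing $\sigma$ to one cycle of arbitrary length recovers (ii) from (iii).

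The only step with genuine content is (ii) $\Rightarrow$ (i): condition (ii) is needed essentially to keep the defining infimum from being $-\infty$, and the slightly subtle point is that a function admitting, at every point of its domain, an affine majorant touching it there must be concave — being the infimum of that family of affine functions — not merely quasi-concave. Everything in (ii) $\Leftrightarrow$ (iii) is routine, but one must carry out the cost computation carefully to see the cancellation of the $\log \mu_n$ and $\log \ppi_n$ terms under an arbitrary permutation.
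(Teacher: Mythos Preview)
The paper does not actually prove this proposition: it is stated with the remark ``The following result is taken from \cite{PW14}'' and no proof is given here. So there is no in-paper argument to compare against.

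Your proof is correct and is the natural one. The implication (i) $\Rightarrow$ (ii) is the obvious telescoping, and your Rockafellar-type potential construction for (ii) $\Rightarrow$ (i) is exactly the right idea: the MCM condition bounds the infimum away from $-\infty$, the concatenation step yields the generating inequality $\Phi(q) \le \ell_p(q)$ with equality at $q=p$, and hence $\Phi = \inf_p \ell_p$ is concave as an infimum of affine functions. Your computation of $c(\theta_a,\phi_b)$ is correct, and the cancellation of the $\log \mu_n$ and $\log \ppi_n$ terms under any permutation is precisely what reduces $c$-cyclical monotonicity to the MCM product; the cycle decomposition of $\sigma$ then matches the two conditions. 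One minor remark: in passing from the $c$-cyclical inequality to MCM the chain appears reversed (you get $L(\mu(\sigma(j)),\mu(j))$ rather than $L(\mu(j),\mu(\sigma(j)))$), but since MCM is quantified over \emph{all} closed chains this is harmless, as you implicitly acknowledge by invoking $\sigma^{-1}$.
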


Using this result, we showed in \cite{PW14} how the optimal transport problem \eqref{eqn:transport.problem} can be solved in terms of functionally generated portfolios. Here is a simple but interesting explicit example which is a direct generalization of the one-dimensional case treated in \cite[Section 4]{PW14}.

\begin{example}[Product of Gaussian distributions] \label{ex:gaussian}
In the transport problem \eqref{eqn:transport.problem}, let $P$ be a product of one-dimensional Gaussian distributions:
\[
P = \bigotimes_{i = 1}^{n-1} N(a_i, \sigma_i^2),
\]
where $a_i \in {\mathbb{R}}$ and $\sigma_i > 0$. Also let
\[
Q = \bigotimes_{i = 1}^{n-1} N(b_i, (1 -  \lambda) \sigma_i^2),
\]
where $b_i \in {\mathbb{R}}$ and $0 < \lambda < 1$. Then the optimal transport map for the measures $P$ and $Q$ is given by the map \eqref{eqn:portfolio.transport}, where the portfolio map $\ppi$ is the following variant of the diversity-weighted portfolio discussed in Example \ref{ex:fgp.example}(iii):
\begin{equation} \label{eqn:general.diversity.weighted}
\ppi_i(p) = \frac{w_i p_i^{\lambda}}{\sum_{j = 1}^n w_j p_j^{\lambda}}, \quad \varphi(p) = \frac{1}{\lambda} \log\left(\sum_{j = 1}^n w_j p_j^{\lambda}\right),
\end{equation}
where the coefficients $w_i$ are chosen such that $(1 - \lambda) a_i - \log \frac{w_i}{w_n} = b_i$ for all $i$. 
\end{example}

\section{Optimal transport and duality} \label{sec:duality}
\subsection{$c$-concavity and duality}
% L (Dec 18): Recalled some notation
Now we make use of the notion of $c$-concavity in optimal transport theory. The definitions we use are standard and can be found in \cite[Chapter 1]{AG13}. Again, $c$ refers to our cost function \eqref{eqn:psi.intro}. Also recall that ${\mathcal{X}} = {\mathbb{R}}^{n-1}$ and ${\mathcal{Y}} = {\mathbb{R}}^{n-1}$ are the underlying spaces of the variables $\theta$ and $\phi$ respectively. For $f: {\mathcal{X}} \rightarrow {\mathbb{R}} \cup \{\pm \infty\}$ we define its $c$-transform by
\[
f^*(\phi) := \inf_{\theta \in {\mathcal{X}}} \left(c(\theta, \phi) - f(\theta)\right), \quad \phi \in {\mathcal{Y}}.
\]
Similarly, the $c$-transform of a function $g: {\mathcal{Y}} \rightarrow {\mathbb{R}} \cup \{\pm \infty\}$ is defined by
\[
g^*(\theta) := \inf_{\phi \in {\mathcal{Y}}} \left(c(\theta, \phi) - g(\phi)\right), \quad \theta \in {\mathcal{X}}.
\]
We say that $f: {\mathcal{X}} \rightarrow {\mathbb{R}} \cup \{-\infty\}$ is $c$-concave if there exists $g: {\mathcal{Y}} \rightarrow {\mathbb{R}} \cup \{-\infty\}$ such that $f = g^*$ (similar for $c$-concave functions on ${\mathcal{Y}}$). A function $h$ (on ${\mathcal{X}}$ or ${\mathcal{Y}}$) is $c$-concave if and only if $h^{**} = h$.

If $f: {\mathcal{X}} \rightarrow {\mathbb{R}} \cup \{-\infty\}$ is $c$-concave, its $c$-superdifferential is defined by
\begin{equation} \label{eqn:c.super.differential}
\partial^{c} f := \left\{(\theta, \phi) \in {\mathcal{X}} \times {\mathcal{Y}}: f(\theta) + f^*(\phi) = c(\theta, \phi) \right\}.
\end{equation}
For $\theta \in {\mathcal{X}}$ we define $\partial^{c} f(\theta) := \left\{\phi \in {\mathcal{Y}}: (\theta, \phi) \in \partial^c f \right\}$. If this set is a singleton $\{\phi\}$, we call $\phi$ the $c$-supergradient of $f$ at $\theta$ and  write $\phi = \nabla^c f(\theta)$. Similar definitions hold for a $c$-concave function $g$ on ${\mathcal{Y}}$.

% L (Dec 22): Rephrased (seems to be what reviewer means?)
Let $f: {\mathcal{X}} \rightarrow {\mathbb{R}} \cup \{-\infty\}$ be $c$-concave. By definition of $f^*$, we have
\begin{equation} \label{eqn:fenchel.identity}
f(\theta) + f^*(\phi) \leq c(\theta, \phi)
\end{equation}
for every pair $(\theta, \phi) \in {\mathcal{X}} \times {\mathcal{Y}}$, and equality holds in \eqref{eqn:fenchel.identity} if and only if $(\theta, \phi) \in \partial^c f$. This is a generalized version of Fenchel's identity (see \cite[Section 12]{R70}) and will be used frequently in this paper. % L (Dec 19): Added a citation.

Our first lemma relates exponential concavity on $\Delta_n$ with $c$-concavity on ${\mathcal{X}}$ and on ${\mathcal{Y}}$. Note that the cost function is asymmetric, and $c$-concavity on ${\mathcal{X}}$ is equivalent to $c$-concavity on ${\mathcal{Y}}$ after a change of variable.

\begin{lemma} [Exponential concavity and $c$-concavity] \label{lem:c.concave}
For $\varphi: \Delta_n \rightarrow {\mathbb{R}} \cup \{-\infty\}$ the following statements are equivalent.
\begin{enumerate}
\item[(i)] $\varphi$ is exponentially concave on $\Delta_n$.
\item[(ii)] The function $f: {\mathcal{X}} \rightarrow {\mathbb{R}} \cup \{-\infty\}$ defined by
\[
f(\theta) = \varphi({\bf p}(\theta)) + \psi(\theta)
\]
is $c$-concave on ${\mathcal{X}}$.
\item[(iii)] The function $g: {\mathcal{Y}} \rightarrow {\mathbb{R}} \cup \{-\infty\}$ defined by
\[
g(\phi) = \varphi({\bf p}(-\phi)) + \psi(-\phi),
\]
where $-\phi$ is the exponential coordinate, is $c$-concave on ${\mathcal{Y}}$.
\end{enumerate}
\end{lemma}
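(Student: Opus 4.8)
The plan is to reduce the whole lemma to the classical fact that a concave function is an infimum of affine functions, after first making the cost transparent in the $p$-variable. I would begin by recording the identity that drives everything: since $p_i = {\bf p}_i(\theta) = e^{\theta_i - \psi(\theta)}$ and $\theta_n = \phi_n = 0$, one has $c(\theta,\phi) = \psi(\theta - \phi) = \log\bigl(\sum_{i=1}^n e^{\theta_i - \phi_i}\bigr) = \psi(\theta) + \log\bigl(\sum_{i=1}^n p_i e^{-\phi_i}\bigr)$, and hence, with $f(\theta) = \varphi({\bf p}(\theta)) + \psi(\theta)$,
\[
c(\theta,\phi) - f(\theta) = \log\Bigl(\sum_{i=1}^n p_i e^{-\phi_i}\Bigr) - \varphi(p), \qquad p = {\bf p}(\theta).
\]
Then I would use that $f$ is $c$-concave on ${\mathcal X}$ exactly when $f = g^*$ for some $g : {\mathcal Y} \to {\mathbb R} \cup \{-\infty\}$. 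Exponentiating $g^*(\theta) = \inf_\phi (c(\theta,\phi) - g(\phi))$, using the identity above and the fact that $\exp$ commutes with infima, this becomes $\Phi(p) = e^{\varphi(p)} = \inf_\phi \sum_{i=1}^n p_i\, a_i(\phi)$ with $a_i(\phi) := e^{-\phi_i - g(\phi)} > 0$, the infimum running over those $\phi$ with $g(\phi)$ finite. Since the vectors $(a_1(\phi),\dots,a_{n-1}(\phi),e^{-g(\phi)})$ exhaust $(0,\infty)^n$ as $\phi$ and $g(\phi)$ vary, this identifies ``$f$ is $c$-concave on ${\mathcal X}$'' with ``$\Phi$ is the pointwise infimum of a nonempty family of affine functionals on $\Delta_n$ with strictly positive coefficients''. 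I would dispose of the degenerate case $\varphi \equiv -\infty$ (i.e.\ $\Phi \equiv 0$) separately, where all three statements hold trivially.

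With this reformulation, (i) $\Leftrightarrow$ (ii) is just the assertion that a positive function on $\Delta_n$ is concave if and only if it is such an infimum. One direction is immediate: an infimum of affine functions is concave (and here positive), which gives (ii) $\Rightarrow$ (i) after taking logarithms. For (i) $\Rightarrow$ (ii) I would first note that a finite concave $\Phi = e^\varphi$ on the open simplex is automatically bounded above and strictly positive (a non-negative concave function vanishing at an interior point vanishes identically), and take $g := f^*$, which by the cost identity is the finite function $g(\phi) = \inf_{p \in \Delta_n}\bigl[\log(\sum_i p_i e^{-\phi_i}) - \varphi(p)\bigr]$. It then remains to check $g^* = f$, i.e.\ $f^{**}=f$; only the inequality $g^* \le f$ is nontrivial. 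At a point $p_0$, $\Phi$ has a supporting affine functional, which on $\Delta_n$ reads $\sum_i c_i p_i$ with $c_i \ge 0$; shifting the coefficients by a small $\delta > 0$ produces a strictly positive affine majorant $\ell_\delta \ge \Phi$ with $\ell_\delta(p_0) = \Phi(p_0) + \delta$, and choosing $\phi^\delta$ with $e^{-\phi^\delta_i}$ proportional to the coefficients of $\ell_\delta$ leads, after a short computation and the choice $\delta \le \Phi(p_0)(e^\varepsilon - 1)$, to $\log(\sum_i (p_0)_i e^{-\phi^\delta_i}) - g(\phi^\delta) \le \varphi(p_0) + \varepsilon$. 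Letting $\varepsilon \downarrow 0$ gives $g^*(\theta_0) \le f(\theta_0)$, hence $f = g^*$ and $f$ is $c$-concave.

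Finally, for (ii) $\Leftrightarrow$ (iii) I would exploit the symmetry of the cost under the involution $(\theta,\phi) \mapsto (-\phi,-\theta)$, namely $c(\theta,\phi) = \psi(\theta-\phi) = \psi((-\phi)-(-\theta)) = c(-\phi,-\theta)$. This implies that $h \mapsto \bigl(\,\cdot \mapsto h(-\,\cdot\,)\bigr)$ interchanges $c$-concave functions on ${\mathcal Y}$ with $c$-concave functions on ${\mathcal X}$. Since $g(\phi) = \varphi({\bf p}(-\phi)) + \psi(-\phi) = f(-\phi)$, statement (iii) says precisely that $\theta \mapsto g(-\theta) = f(\theta)$ is $c$-concave on ${\mathcal X}$, which is (ii). Alternatively one may simply repeat the argument for (i) $\Leftrightarrow$ (ii) with $\theta$ replaced by $-\phi$, since exponential concavity of $\varphi$ is a coordinate-free property.

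The main obstacle I anticipate is the nontrivial direction (i) $\Rightarrow$ (ii): manufacturing, from a concave $\Phi$, affine majorants with strictly positive coefficients that still recover $\Phi$ in the infimum, equivalently verifying $f^{**} = f$ directly. The two features that force the $\delta$- and $\varepsilon$-perturbations, rather than an exact supporting hyperplane, are that the infimum defining the $c$-transform is typically not attained (so one cannot argue via a nonempty $c$-superdifferential) and that $\Phi$ may degenerate towards the boundary of $\Delta_n$, so a supporting affine functional can have a vanishing coefficient and need not correspond to a finite $\phi$.
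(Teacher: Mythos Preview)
Your approach is correct and is essentially the same as the paper's. Both arguments reduce (i) $\Leftrightarrow$ (ii) to the fact that a nonnegative concave function on $\Delta_n$ is a pointwise infimum of affine functions with (after a small perturbation) strictly positive coefficients, and both use the same key identity $c(\theta,\phi)-f(\theta)=\log\bigl(\sum_i p_i e^{-\phi_i}\bigr)-\varphi(p)$. The only cosmetic difference is that the paper invokes Rockafellar's global envelope $\Phi=\inf_{\ell\in\mathcal C}\ell$ and then builds an auxiliary $h$ with $f=h^*$, using the shift $\ell\mapsto\ell+1/k$ to force positive coefficients, whereas you verify $f^{**}=f$ pointwise via a supporting hyperplane plus your $\delta$-shift; these are the same perturbation in different clothing. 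Your symmetry argument $c(\theta,\phi)=c(-\phi,-\theta)$ for (ii) $\Leftrightarrow$ (iii) is exactly what the paper has in mind when it says ``the others can be proved similarly.'' One small point worth making explicit in your write-up: the claim $c_i\ge 0$ for the supporting affine functional is not automatic from support at an interior point alone; it follows because $\ell\ge\Phi\ge 0$ on the open simplex forces $\ell\ge 0$ on the closed simplex by continuity, hence $c_i=\ell(e^{(i)})\ge 0$.
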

\begin{proof}
We prove the implication (i) $\Rightarrow$ (ii) and the others can be proved similarly. Suppose (i) holds and consider the non-negative concave function $\Phi = e^{\varphi}$ on $\Delta_n$. By \cite[Theorem 10.3]{R70}, we can extend $\Phi$ continuously up to $\overline{\Delta}_n$, the closure of $\Delta_n$ in ${\mathbb{R}}^n$. We further extend $\Phi$ to the affine hull $H$ of $\Delta_n$ in ${\mathbb{R}}^n$ by setting $\Phi(p) = -\infty$ for $p \notin \overline{\Delta}_n$. The extended function $\Phi$ is then a closed concave function on $H$. By convex duality (see \cite[Theorem 12.1]{R70}), there exists a family ${\mathcal{C}}$ of affine functions on $H$ such that
\begin{equation} \label{eqn:convex.duality}
\Phi(p) = \inf_{\ell \in {\mathcal{C}}} \ell(p), \quad p \in \Delta_n.
\end{equation}
Since $\Phi$ is non-negative on $\Delta_n$, each $\ell \in {\mathcal{C}}$ is non-negative on $\Delta_n$. Replacing $\ell$ by the sequence $\ell_k = \ell + \frac{1}{k}$, $k \geq 1$, we may assume without loss of generality that each $\ell \in {\mathcal{C}}$ is strictly positive on $\Delta_n$. We parameterize each $\ell \in {\mathcal{C}}$ in the form $\ell(p) = \sum_{i = 1}^n a_i p_i$ where $a_1, \ldots, a_n$ are positive constants. (Note that an extra constant term is not required since $p_1 + \cdots + p_n = 1$.) Writing $\phi_i := - \log \frac{a_i}{a_n}$, $i = 1, \ldots, n - 1$ and switching to exponential coordinates, we have
\begin{equation*}
\begin{split}
\log \ell(p) &= \log\left(\sum_{i = 1}^n a_i p_i\right) \\
  &= \log\left(1 + \sum_{i = 1}^{n-1} \frac{a_i}{a_n} \frac{p_i}{p_n} \right) + \log p_n  + \log a_n \\
 &= \log\left(1 + \sum_{i = 1}^{n-1} e^{\theta_i - \phi_i} \right) - \psi(\theta) + \log a_n \\
  &= c(\theta - \phi) - \psi(\theta) + \log a_n,
\end{split}
\end{equation*}
It follows from \eqref{eqn:convex.duality} that
\begin{equation} \label{eqn:f.representation}
f(\theta) = \varphi(\theta) + \psi(\theta) = \inf_{\ell \in {\mathcal{C}}} \left( c(\theta - \phi) + \log a_n\right).
\end{equation}
Define $h: {\mathcal{Y}} \rightarrow {\mathbb{R}} \cup \{-\infty\}$ by setting
\[
h(\phi) = \inf\left\{-\log a_n: \exists \ \ell(p) = \sum_{i = 1}^n a_i p_i \in {\mathcal{C}} \text{ s.t. } \phi_i = - \log \frac{a_i}{a_n} \ \forall \  i \right\},
\]
where the infimum of the empty set is $-\infty$. From \eqref{eqn:f.representation}, we have
\[
f(\theta) = \inf_{\phi \in {\mathcal{Y}}} \left( c(\theta - \phi) - h(\phi)\right) = h^*(\theta)
\]
which shows that $f$ is $c$-concave on ${\mathcal{X}}$.
\end{proof}

The following is the $c$-concave analogue of the classical Legendre transformation \cite{R70}. Its proof is standard but lengthy and will be given in the Appendix.

% L (Dec 19): added a \eqref to \pi
\begin{theorem}[$c$-Legendre transformation] \label{thm:duality}
Let $\varphi$ be an exponentially concave function $\varphi$ satisfying Assumption \ref{ass:regularity}, and let $\ppi$, defined by \eqref{eqn:fgp.weights}, be the portfolio map generated by $\varphi$. Given $\varphi$, consider the $c$-concave function
\begin{equation} \label{eqn:f.varphi}
f(\theta) := \varphi(\theta) + \psi(\theta)
\end{equation}
defined on ${\mathcal{X}} = {\mathbb{R}}^{n-1}$ via the exponential coordinate system.
\begin{enumerate}
\item[(i)] The $c$-supergradient of $f$ is given by \eqref{eqn:portfolio.transport}, i.e.,
\begin{equation} \label{eqn:f.varphi.gradient}
\nabla^c f(\theta) = \left(\theta_i - \log \frac{\ppi_i(\theta)}{\ppi_n(\theta)}\right)_{1 \leq i \leq n - 1}, \quad \theta \in {\mathcal{X}}.
\end{equation}
Moreover, the map $\nabla^c f: {\mathcal{X}} \rightarrow {\mathcal{Y}}$ is injective.
\item[(ii)] Let ${\mathcal{Y}}' \subset {\mathcal{Y}}$ be the range of $\nabla^c f$. Then the $c$-supergradient of $f^*$ is given on ${\mathcal{Y}}'$ by
\begin{equation*}
\nabla^c f^*(\phi) = \left(\nabla^c f\right)^{-1}(\phi), \quad \phi \in {\mathcal{Y}}'.
\end{equation*}
\end{enumerate}
In fact, the map $\nabla^c f$ is a diffeomorphism from ${\mathcal{X}}$ to ${\mathcal{Y}}'$ whose inverse is $\nabla^c f^*$. Also, the function $f^*$ is smooth on the open set ${\mathcal{Y}}'$.
\end{theorem}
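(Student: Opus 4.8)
The plan is to work entirely in the exponential coordinate system and exploit the special additive structure of the cost, $c(\theta,\phi) = \psi(\theta - \phi)$ with $\psi$ strictly convex and smooth. The $c$-superdifferential relation $(\theta,\phi) \in \partial^c f$ is, by the generalized Fenchel identity \eqref{eqn:fenchel.identity}, equivalent to $\phi$ being a minimizer of $\phi' \mapsto c(\theta,\phi') - f^*(\phi')$, equivalently to $\theta$ being a minimizer of $\theta' \mapsto c(\theta',\phi) - f(\theta')$. First I would verify that for the given smooth $f = \varphi + \psi$, the map in \eqref{eqn:f.varphi.gradient} — call it $F(\theta)$, so that the exponential coordinate of $\ppi(\theta)$ is $\theta - F(\theta)$ — actually lands in the $c$-superdifferential of $f$. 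By Proposition \ref{prop:fgp.mcm}, part (iii), the graph of $F$ is $c$-cyclically monotone; since $f$ is $c$-concave (Lemma \ref{lem:c.concave}), standard optimal transport theory (e.g. \cite[Chapter 1]{AG13}) gives that a $c$-cyclically monotone set on which $f$ attains the Fenchel equality is contained in $\partial^c f$. Concretely, one checks the first-order condition: writing $x = \theta - \phi$ with $\phi = F(\theta)$, the identity $e^{x_i - \psi(x)} = \ppi_i(\theta)/\ppi_n(\theta) \cdot (\text{normalization})$ shows that $\nabla \psi(\theta - F(\theta)) = \ppi(\theta)$ read off in the appropriate coordinates, which is exactly the stationarity of $\theta' \mapsto \psi(\theta' - \phi) - f(\theta')$ at $\theta' = \theta$ once one uses $\nabla f(\theta) = \nabla\varphi(\theta) + \nabla\psi(\theta)$ and the explicit form \eqref{eqn:fgp.weights} of $\ppi$. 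This establishes \eqref{eqn:f.varphi.gradient} as at least one $c$-supergradient.

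Next I would prove \emph{uniqueness} of the $c$-supergradient at each $\theta$, which simultaneously shows $\partial^c f(\theta)$ is the singleton $\{F(\theta)\}$. Here the key input is Assumption \ref{ass:regularity}: the Euclidean Hessian of $\Phi = e^\varphi$ is strictly negative definite, so $\Phi$ is strictly concave. If $\phi \in \partial^c f(\theta)$, then $\theta$ minimizes $\theta' \mapsto \psi(\theta' - \phi) - \varphi(\theta') - \psi(\theta')$; translating back to the simplex via ${\bf p}$, this says $p = {\bf p}(\theta)$ minimizes $q \mapsto \log(\text{affine in }q) - \varphi(q)$, i.e. that a particular supporting affine function $\ell$ of $\Phi$ touches at $p$ — and strict concavity of $\Phi$ forces the contact point, hence $\phi$, to be unique. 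This also yields injectivity of $F = \nabla^c f$: if $F(\theta) = F(\theta')= \phi$, both $\theta,\theta'$ minimize the same strictly convex-minus-concave functional and strict concavity of $\Phi$ (equivalently, strict convexity of $\psi$ together with the Hessian condition) pins down the minimizer.

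For part (ii), since $f$ is $c$-concave we have $f^{**} = f$, and the symmetry of the relation $f(\theta) + f^*(\phi) = c(\theta,\phi)$ immediately gives that $(\theta,\phi) \in \partial^c f \iff (\phi,\theta) \in \partial^c f^*$ (with the roles of ${\mathcal X}$ and ${\mathcal Y}$ swapped). Hence on ${\mathcal Y}' = \mathrm{range}(\nabla^c f)$, the set $\partial^c f^*(\phi)$ is exactly $\{(\nabla^c f)^{-1}(\phi)\}$, which is the asserted formula; in particular $\nabla^c f^*$ is single-valued on ${\mathcal Y}'$ and is the inverse of $\nabla^c f$. Finally, for the diffeomorphism and smoothness claims I would invoke the inverse function theorem. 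The map $F(\theta) = \theta - (\log \ppi_i(\theta) - \log\ppi_n(\theta))_i$ is smooth because $\varphi$ is smooth and $\ppi$ maps $\Delta_n$ into $\Delta_n$ (Assumption \ref{ass:regularity}(ii)), so each $\ppi_i > 0$ and $\log\ppi_i$ is smooth. Its Jacobian can be computed from \eqref{eqn:fgp.weights}; one shows it is nonsingular everywhere precisely because the Euclidean Hessian of $\Phi$ is nondegenerate — indeed $DF$ will be expressible (after the coordinate change) as a product of the strictly positive definite Hessian of $\psi$ and a factor governed by $\mathrm{Hess}\,\Phi$. Since $F$ is an injective smooth immersion (open map) onto ${\mathcal Y}'$, ${\mathcal Y}'$ is open and $F: {\mathcal X} \to {\mathcal Y}'$ is a diffeomorphism with smooth inverse $\nabla^c f^*$; smoothness of $f^*$ on ${\mathcal Y}'$ then follows from the Fenchel identity $f^*(\phi) = c(\theta,\phi) - f(\theta)$ with $\theta = F^{-1}(\phi)$, a composition of smooth maps.

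I expect the main obstacle to be the careful bookkeeping in the Jacobian computation of $F$ and in translating the strict-concavity hypothesis on $\Phi$ into nondegeneracy of $DF$ — this is where the somewhat awkward interplay between the simplex coordinates, the exponential coordinates, and the normalization in \eqref{eqn:fgp.weights} has to be handled cleanly. The conceptual steps (contact point uniqueness from strict concavity, the duality symmetry in part (ii)) are routine once the $c$-concave Fenchel framework of Section \ref{sec:duality} is in place; it is the explicit differentiation and the verification that no degeneracy creeps in that will require the most care, which is presumably why the authors defer the full argument to the Appendix.
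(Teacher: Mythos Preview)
Your proposal is essentially correct and follows the same route as the paper: derive the first-order stationarity condition to obtain the formula \eqref{eqn:f.varphi.gradient}, use strict concavity of $\Phi$ for uniqueness of the minimizer (hence injectivity of $\nabla^c f$), invoke the symmetry of the Fenchel identity for part (ii), apply the inverse function theorem for the diffeomorphism claim, and read off smoothness of $f^*$ from $f^*(\phi) = c(\nabla^c f^*(\phi),\phi) - f(\nabla^c f^*(\phi))$.

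One small wobble worth flagging: in your second paragraph you attribute the fact that $\partial^c f(\theta)$ is a singleton to strict concavity of $\Phi$ via a ``contact point'' argument, but as written the logic is inverted. Strict concavity of $\Phi$ gives uniqueness of the minimizer $\theta$ for a \emph{fixed} $\phi$ (this is what yields injectivity of $\nabla^c f$, and the paper packages it neatly by observing that $e^{f(\theta)-\psi(\theta-\phi)} = \Phi(p)/\sum_i e^{-\phi_i}p_i$ is strictly quasi-concave as a quotient of a strictly concave and an affine function). Uniqueness of $\phi$ for fixed $\theta$, on the other hand, comes from \emph{differentiability}: the first-order condition $\nabla\psi(\theta-\phi) = \nabla f(\theta)$ together with strict convexity of $\psi$ pins down $\theta-\phi$ and hence $\phi$. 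You already have this implicitly in your first paragraph when you check stationarity, so the gap is in the exposition rather than the argument.
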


% L (Dec 19): Moved the ref to the figure to {def:coordinate} 
Although ${\mathcal{Y}}'$ is in general a strict subset of ${\mathcal{Y}}$, by Theorem \ref{thm:duality} the dual variable $\phi = \nabla^c f(\theta)$ defines a global coordinate system of the manifold $\Delta_n$. In Theorem \ref{thm:geodesics} we will use another coordinate system on $\Delta_n$ called the dual Euclidean coordinate system. Thus we have four coordinate systems on $\Delta_n$: Euclidean, primal, dual and dual Euclidean (see Definition \ref{def:coordinate}). In the following we will frequently switch between coordinate systems to facilitate computations. To avoid confusions let us state once for all the conventions used. We let $\varphi$ and $f = \varphi + \psi$ be given.

\begin{figure}[t!]
\begin{tikzpicture}[scale = 0.6]
\draw[<-] (3.7, -1) to (3.7, -5);
\draw[->] (4.1, -1) to (4.1, -5);
\node[left] at (3.7, -2.5) {{\footnotesize ${\bf p} \circ -\mathrm{Id}$}};
\node[right] at (4.1, -2.5) {{\footnotesize $-\boldsymbol{\theta}$}};

\draw[->] (-2.2, -1) to (2.2, -5);
\node[above] at (0, -3) {{\footnotesize $\boldsymbol{\phi}$}};

\draw[->] (-4.3, -1) to (-4.3, -5);
\draw[<-] (-3.9, -1) to (-3.9, -5);
\node[left] at (-4.3, -2.5) {{\footnotesize $\boldsymbol{\theta}$}};
\node[right] at (-3.9, -2.5) {{\footnotesize ${\bf p}$}};

\draw[->] (-2.2, -0.5) to (2.2, -0.5);
\node[above] at (0, -0.5) {{\footnotesize ${\bf p}^*$}};

\node [above] at (-4, -1) {$\Delta_n$};
\node [above] at (-4, 0) {{\footnotesize primal Euclidean}};
\node [above] at (4, -1) {$\Delta_n$};
\node [above] at (4, 0) {{\footnotesize dual Euclidean}};

\node [below] at (-4, -5) {${\mathcal{X}} = {\mathbb{R}}^{n-1}$};
\node [below] at (-4, -6) {{\footnotesize primal exponential}};

\node [below] at (4.2, -5) {${\mathcal{Y}}' \subset {\mathbb{R}}^{n-1}$};
\node [below] at (4, -6) {{\footnotesize dual exponential}};

\draw[->] (-2.2, -5.3) to (2.2, -5.3);
\draw[->] (2.2, -5.7) to (-2.2, -5.7);

\node[above] at (0, -5.3) {{\footnotesize $\nabla^c f$}};
\node[below] at (0, -5.7) {{\footnotesize $\nabla^c f^*$}};

\end{tikzpicture}
\caption{Coordinate systems on $\Delta_n$.} \label{fig:coordinate}
\end{figure}
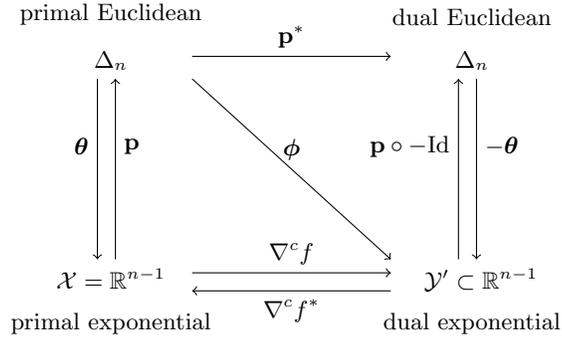

\begin{definition}[Coordinate systems] \label{def:coordinate}
For the unit simplex $\Delta_n$ (defined by \eqref{eqn:simplex}) we call the identity map
\[
p = (p_1, \ldots, p_n), \quad p_i > 0, \quad \sum_{i = 1}^n p_i = 1
\]
the (primal) Euclidean coordinate system with range $\Delta_n$. We let
\[
\theta = \boldsymbol{\theta}(p) = \left(\log \frac{p_1}{p_n}, \ldots, \log \frac{p_{n-1}}{p_n}\right)
\]
be the primal (exponential) coordinate system with range ${\mathcal{X}}$ and 
\[
\phi = \boldsymbol{\phi}(p) := \nabla^c f(\theta)
\]
be the dual (exponential) coordinate system with range ${\mathcal{Y}}'$. The dual Euclidean coordinate system is defined by the composition
\[
p^* = {\bf p}^*(p) := {\bf p}(-\boldsymbol{\phi}(p)).
\]
See Figure \ref{fig:coordinate} for an illustration. From now on $p$, $p^*$, $\theta$ and $\phi$ always represent the same point of $\Delta_n$. In particular, unless otherwise specified $\theta$ and $\phi$ are dual to each other in the sense that $\phi = \nabla^c f(\theta)$. By convention we let $\theta_n = \phi_n = 0$ for any $p \in \Delta_n$.
\end{definition}

\begin{notation} [Switching coordinate systems] \label{not:coordinate}
We identify the spaces $\Delta_n$, ${\mathcal{X}}$ and ${\mathcal{Y}}'$ using the coordinate systems in Definition \ref{def:coordinate}. If $h$ is a function on any one of these spaces, we write $h(p) = h(\theta) = h(\phi) = h(p^*)$ depending on the coordinate system used.
\end{notation}

We also record a useful fact. A formula analogous to the first statement is derived in \cite{S14}.

\begin{lemma} \label{lem:weight.as.derivative}
For $1 \leq i \leq n - 1$, we have
\begin{equation*}
\begin{split}
\frac{\partial}{\partial \theta_i} f(\theta) &= \ppi_i(\theta), \quad \theta \in {\mathcal{X}}, \\
\frac{\partial}{\partial \phi_i} f^*(\phi) &= -\ppi_i(\phi), \quad \phi \in {\mathcal{Y}}'.
\end{split}
\end{equation*}
\end{lemma}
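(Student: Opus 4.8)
The plan is to compute both derivatives directly from the definitions, using Fenchel's identity \eqref{eqn:fenchel.identity} together with the explicit formula \eqref{eqn:f.varphi.gradient} for the $c$-supergradient. For the first identity, recall that $f(\theta) = \varphi(\theta) + \psi(\theta)$ where $\varphi(\theta) = \varphi({\bf p}(\theta))$ is the exponentially concave function in exponential coordinates and $\psi(\theta) = \log(1 + \sum_{j=1}^{n-1} e^{\theta_j})$. A standard computation gives $\frac{\partial}{\partial \theta_i} \psi(\theta) = {\bf p}_i(\theta) = p_i$. For the $\varphi$ term I would use the chain rule: $\frac{\partial}{\partial \theta_i} \varphi({\bf p}(\theta)) = \sum_{j=1}^n \frac{\partial \varphi}{\partial p_j}(p) \frac{\partial p_j}{\partial \theta_i}$, where from \eqref{eqn:theta.to.p} one has $\frac{\partial p_j}{\partial \theta_i} = p_i(\delta_{ij} - p_j)$ for $1 \le i \le n-1$. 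After substituting, the right-hand side becomes $p_i\left( \frac{\partial \varphi}{\partial p_i}(p) - \sum_{j=1}^n p_j \frac{\partial \varphi}{\partial p_j}(p)\right)$. Comparing with the definition \eqref{eqn:fgp.weights} of $\ppi_i(p)$, namely $\ppi_i(p) = p_i(1 + \nabla\varphi(p)\cdot(e^{(i)} - p))$, and adding back $\frac{\partial \psi}{\partial \theta_i} = p_i$, the two expressions coincide — here one must be slightly careful because $\varphi$ is a function on the simplex (an affine subspace of $\mathbb{R}^n$) so $\nabla\varphi$ is only defined up to the direction normal to the simplex, but this ambiguity is exactly what cancels in the combination $\frac{\partial\varphi}{\partial p_i} - \sum_j p_j \frac{\partial\varphi}{\partial p_j}$, so the formula is well posed.

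For the second identity, I would exploit the $c$-Legendre duality of Theorem \ref{thm:duality}. By Fenchel's identity, for $\phi \in {\mathcal{Y}}'$ and $\theta = \nabla^c f^*(\phi) = (\nabla^c f)^{-1}(\phi)$ we have $f(\theta) + f^*(\phi) = c(\theta, \phi) = \psi(\theta - \phi)$. Differentiating this identity in $\phi_i$, treating $\theta = \theta(\phi)$ as a function of $\phi$ via the diffeomorphism, and using that $\nabla_\theta f(\theta) - \nabla_\theta[\psi(\theta - \phi)]|_{\theta = \theta(\phi)} = 0$ at the supergradient point (which is the first-order condition defining $\partial^c f$), the terms involving $\frac{\partial\theta}{\partial\phi_i}$ drop out by the envelope theorem, leaving $\frac{\partial}{\partial\phi_i} f^*(\phi) = -\frac{\partial}{\partial(\theta-\phi)_i}\psi(\theta - \phi) = -{\bf p}_i(\theta - \phi)$. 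Finally, by \eqref{eqn:portfolio.transport} the vector $\theta - \phi$ is precisely the exponential coordinate of $\ppi(\theta)$, so ${\bf p}_i(\theta - \phi) = \ppi_i(\theta) = \ppi_i(\phi)$ (the last equality by Notation \ref{not:coordinate}, since $\theta$ and $\phi$ represent the same point). This gives $\frac{\partial}{\partial\phi_i} f^*(\phi) = -\ppi_i(\phi)$, as claimed; smoothness of $f^*$ on ${\mathcal{Y}}'$ needed to justify the differentiation is already provided by Theorem \ref{thm:duality}.

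The main obstacle I anticipate is the bookkeeping around the redundancy of the $p$-coordinates: $\varphi$ and $\Phi$ live on the hyperplane $\sum p_i = 1$, so partial derivatives $\frac{\partial\varphi}{\partial p_i}$ and the Euclidean gradient $\nabla\varphi$ are only defined modulo a choice of extension off the simplex, and one must check that every formula written is invariant under that choice. This is handled by always working with the ``centered'' combinations (differences of partials, or $\nabla\varphi(p)\cdot(v)$ with $v$ tangent to the simplex, i.e. $\sum v_i = 0$), which appear naturally from the chain rule through $\frac{\partial p_j}{\partial\theta_i} = p_i(\delta_{ij}-p_j)$ since $\sum_j(\delta_{ij}-p_j) = 0$. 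The second, mild point is justifying the envelope/first-order computation in the dual identity, which is legitimate precisely because Theorem \ref{thm:duality} guarantees $\nabla^c f$ is a diffeomorphism and $f^*$ is smooth, so $\theta(\phi)$ is a smooth function and the interior minimum in the definition of the $c$-transform is attained uniquely.
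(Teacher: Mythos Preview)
Your proposal is correct and follows essentially the same route as the paper: for the first identity the paper (in the Appendix proof of Theorem~\ref{thm:duality}) applies the chain rule through $p_j = e^{\theta_j - \psi(\theta)}$ to obtain $\frac{\partial f}{\partial \theta_i} = p_i\bigl(1 + \frac{\partial \varphi}{\partial p_i} - \sum_j p_j \frac{\partial \varphi}{\partial p_j}\bigr) = \ppi_i$, and for the second it simply differentiates the Fenchel identity $f^*(\phi) = c(\theta,\phi) - f(\theta)$, exactly as you describe with the envelope argument. Your write-up is in fact more explicit than the paper's, particularly in justifying the cancellation via the first-order condition and in addressing the ambiguity of $\nabla\varphi$ off the simplex.
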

\begin{proof}
The first statement is derived in the proof of Theorem \ref{thm:duality}. The second statement can be proved by differentiating $f^*(\phi) = c(\theta, \phi) - f(\theta)$ (Fenchel's identity).
\end{proof}

\subsection{$c$-divergence} \label{sec:c.divergence}
By duality, we show that a pair of natural divergences on $\Delta_n$ can be defined for the $c$-concave functions $f$ and $f^*$. Moreover, they coincide with L-divergence. Clearly we can consider other cost functions $C$ other than $c$. When $C$ is the squared Euclidean distance, the analogue of Definition \ref{def:c.divergence} below gives the classical Bregman divergence. This covers both L-divergence and Bregman divergence under the same framework. To the best of our knowledge these definitions, which depend crucially on the interplay between transport and divergence, are new. We will use the triple representation $(p, \theta, \phi)$ for each point in $\Delta_n$.

\begin{definition} [$c$-divergence] \label{def:c.divergence}
Consider the $c$-concave function $f$ defined by \eqref{eqn:f.varphi} and its $c$-transform $f^*$. 
\begin{enumerate}
\item[(i)] The $c$-divergence of $f$ is defined by
\begin{equation} \label{eqn:c.divergence}
D\left( p \mid p'\right) = c(\theta, \phi') - c(\theta', \phi') - (f(\theta) - f(\theta')), \quad p, p' \in \Delta_n.
\end{equation}
\item[(ii)] The $c$-divergence of $f^*$ is defined by
\begin{equation} \label{eqn:c.dual.divergence}
D^*\left( p \mid p'\right) = c(\theta', \phi) - c(\theta', \phi') - (f^*(\phi) - f^*(\phi')), \quad p, p' \in \Delta_n.
\end{equation}
\end{enumerate}
\end{definition}

From Fenchel's identity \eqref{eqn:fenchel.identity} we see that $D$ and $D^*$ are non-negative and non-degnereate, i.e., they vanish only on the diagonal of $\Delta_n \times \Delta_n$. The following is a generalization of the self-dual expression of Bregman divergence (see \cite[Theorem 1.1]{A16}).

\begin{proposition} [Self-dual expressions] \label{prop:self.dual}
We have
\begin{eqnarray} 
D\left(p \mid p'\right) &=& c(\theta, \phi') - f(\theta) - f^*(\phi'), \label{eqn:c.divergence.2} \\  
D^*\left(p \mid p'\right) &=& c(\theta', \phi) - f^*(\phi) - f(\theta'). \label{eqn:c.dual.divergence.2}
\end{eqnarray}
In particular, for $p, p' \in \Delta_n$ we have $D\left( p \mid  p'\right) = D^*\left(p' \mid p\right)$.
\end{proposition}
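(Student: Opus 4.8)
The plan is to derive both self-dual expressions directly from Fenchel's identity \eqref{eqn:fenchel.identity}, and then read off the symmetry $D(p \mid p') = D^*(p' \mid p)$ by comparing the two formulas. The key observation is that in Definition \ref{def:c.divergence}, the primal point $p$ (equivalently $\theta$) and the dual point $p'$ (equivalently $\phi'$) enter asymmetrically: $D(p\mid p')$ is built from the cost $c(\theta,\phi')$ and the ``reference'' terms at $p'$. Since $f$ is $c$-concave and $f^* = f^{**}$ holds by Theorem \ref{thm:duality} (indeed $(\theta',\phi') \in \partial^c f$ because $\phi' = \nabla^c f(\theta')$), equality holds in Fenchel's identity at the dual pair $(\theta', \phi')$:
\begin{equation*}
f(\theta') + f^*(\phi') = c(\theta', \phi').
\end{equation*}

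First I would substitute this identity into the definition \eqref{eqn:c.divergence}. Replacing $c(\theta',\phi')$ by $f(\theta') + f^*(\phi')$ gives
\begin{equation*}
D(p\mid p') = c(\theta,\phi') - \bigl(f(\theta') + f^*(\phi')\bigr) - f(\theta) + f(\theta') = c(\theta,\phi') - f(\theta) - f^*(\phi'),
\end{equation*}
which is \eqref{eqn:c.divergence.2}. The same argument applied to \eqref{eqn:c.dual.divergence}, again using $f(\theta') + f^*(\phi') = c(\theta',\phi')$ to eliminate the middle term, yields
\begin{equation*}
D^*(p\mid p') = c(\theta',\phi) - f(\theta') - f^*(\phi') - f^*(\phi) + f^*(\phi') = c(\theta',\phi) - f^*(\phi) - f(\theta'),
\end{equation*}
which is \eqref{eqn:c.dual.divergence.2}. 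Here I should note that the roles of $p$ and $p'$ must be tracked carefully: in $D^*(p\mid p')$ the relevant dual pair used to apply Fenchel is still $(\theta',\phi')$, since that is the pair of coordinates of $p'$, and these are dual to each other by the standing convention in Definition \ref{def:coordinate}.

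Finally, to obtain the symmetry, I would write out \eqref{eqn:c.dual.divergence.2} with $p$ and $p'$ interchanged:
\begin{equation*}
D^*(p' \mid p) = c(\theta, \phi') - f^*(\phi') - f(\theta),
\end{equation*}
which is literally the right-hand side of \eqref{eqn:c.divergence.2}. Hence $D(p\mid p') = D^*(p'\mid p)$. The only subtlety — and the one point requiring care rather than the main obstacle, since there really is no hard step here — is being scrupulous about which exponential coordinate is dual to which Euclidean point when applying Fenchel's identity, i.e., remembering that $\phi' = \nabla^c f(\theta')$ so that $(\theta',\phi') \in \partial^c f$ and equality in \eqref{eqn:fenchel.identity} genuinely holds at that pair. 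Once that is fixed, the whole proposition is two substitutions and a relabeling.
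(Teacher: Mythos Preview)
Your proposal is correct and matches the paper's own proof essentially line for line: the paper also substitutes the Fenchel identity $f(\theta') + f^*(\phi') = c(\theta',\phi')$ into Definition~\ref{def:c.divergence} to obtain \eqref{eqn:c.divergence.2}, remarks that \eqref{eqn:c.dual.divergence.2} is similar, and the symmetry $D(p\mid p') = D^*(p'\mid p)$ is then immediate from comparing the two formulas. Your added remark about why equality holds in Fenchel's identity at $(\theta',\phi')$ is accurate and is exactly the point the paper relies on implicitly.
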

\begin{proof}
To prove \eqref{eqn:c.divergence.2}, we use the Fenchel identity $f(\theta') + f^*(\phi') = c(\theta', \phi')$. Starting from \eqref{eqn:c.divergence}, we have
\begin{equation*}
\begin{split}
D\left( p \mid p'\right) &= c(\theta, \phi') - c(\theta', \phi') - (f(\theta) - f(\theta')) \\
  &= c(\theta, \phi') - f(\theta) - f^*(\phi').
\end{split}
\end{equation*}
The proof of \eqref{eqn:c.dual.divergence.2} is similar.
\end{proof}

Now we show that L-divergence is a $c$-divergence where $c(\theta, \phi) = \psi(\theta - \phi)$.

\begin{theorem} [L-divergence as $c$-divergence] \label{prop:c.divergence}
The $c$-divergence of $f$ is the L-divergence of $\varphi$. Namely, for $p, p' \in \Delta_n$ we have
\[
D\left( p \mid p'\right) = T\left(p \mid p'\right).
\]
\end{theorem}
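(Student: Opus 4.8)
The plan is to unwind both sides of the claimed identity $D(p\mid p') = T(p\mid p')$ using the definitions and the dual correspondence $\phi = \nabla^c f(\theta)$ established in Theorem \ref{thm:duality}. Writing $f = \varphi + \psi$ in the primal exponential coordinate and recalling $c(\theta,\phi) = \psi(\theta - \phi)$, I would start from the self-dual expression \eqref{eqn:c.divergence.2}, namely
\[
D(p\mid p') = c(\theta, \phi') - f(\theta) - f^*(\phi'),
\]
and use Fenchel's identity $f(\theta') + f^*(\phi') = c(\theta',\phi')$ to eliminate $f^*(\phi')$, giving $D(p\mid p') = c(\theta,\phi') - c(\theta',\phi') - (f(\theta) - f(\theta'))$, which is just \eqref{eqn:c.divergence} again; the real content is to rewrite $c(\theta,\phi')$ explicitly. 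Since $\phi' = \nabla^c f(\theta')$ is given by \eqref{eqn:f.varphi.gradient}, we have $\theta'_i - \phi'_i = \log \frac{\ppi_i(\theta')}{\ppi_n(\theta')}$, so $\theta_i - \phi'_i = (\theta_i - \theta'_i) + \log\frac{\ppi_i(p')}{\ppi_n(p')}$, and therefore
\[
c(\theta, \phi') = \psi(\theta - \phi') = \log\left(\sum_{i=1}^n e^{\theta_i - \phi'_i}\right) = \log\left(\sum_{i=1}^n e^{\theta_i - \theta'_i}\,\frac{\ppi_i(p')}{\ppi_n(p')}\right),
\]
using the convention $\theta_n = \phi'_n = 0$.

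Next I would convert the exponential-coordinate quantities back to the Euclidean coordinates $p, p'$. From \eqref{eqn:theta.to.p} we have $e^{\theta_i - \theta'_i} = \frac{p_i/p_n}{p'_i/p'_n} = \frac{p_i}{p'_i}\cdot\frac{p'_n}{p_n}$, so
\[
c(\theta,\phi') = \log\left(\frac{p'_n}{p_n}\cdot\frac{1}{\ppi_n(p')}\sum_{i=1}^n \ppi_i(p')\frac{p_i}{p'_i}\right) = \log\left(\sum_{i=1}^n \ppi_i(p')\frac{p_i}{p'_i}\right) + \log\frac{p'_n}{p_n} - \log\ppi_n(p').
\]
Similarly $c(\theta',\phi') = \psi(\theta' - \phi') = \log\left(\sum_{i=1}^n \frac{\ppi_i(p')}{\ppi_n(p')}\right) = -\log \ppi_n(p')$ since $\sum_i \ppi_i(p') = 1$ (here Assumption \ref{ass:regularity} guarantees $\ppi$ maps into $\Delta_n$). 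Subtracting, the $\log\ppi_n(p')$ terms cancel and
\[
c(\theta,\phi') - c(\theta',\phi') = \log\left(\sum_{i=1}^n \ppi_i(p')\frac{p_i}{p'_i}\right) + \log\frac{p'_n}{p_n}.
\]

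Finally I would handle the $f$ term: $f(\theta) - f(\theta') = (\varphi(p) - \varphi(p')) + (\psi(\theta) - \psi(\theta'))$, and $\psi(\theta) - \psi(\theta') = \log\left(\sum e^{\theta_i}\right) - \log\left(\sum e^{\theta'_i}\right) = -\log p_n + \log p'_n$ by \eqref{eqn:theta.to.p} (since $p_n = e^{-\psi(\theta)}$). Thus the $\log\frac{p'_n}{p_n}$ from the cost difference exactly cancels the $\psi(\theta) - \psi(\theta')$ contribution, leaving
\[
D(p\mid p') = \log\left(\sum_{i=1}^n \ppi_i(p')\frac{p_i}{p'_i}\right) - (\varphi(p) - \varphi(p')),
\]
which is precisely the expression \eqref{eqn:L.divergence.fgp} for $T(p\mid p')$. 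I do not anticipate a genuine obstacle here — the proof is a bookkeeping exercise in changing coordinates — but the one point requiring care is the consistent tracking of the $\log p_n$, $\log p'_n$, and $\log\ppi_n$ terms across the three pieces ($c(\theta,\phi')$, $c(\theta',\phi')$, and $f(\theta) - f(\theta')$), making sure they all cancel; it is also worth stating explicitly that we use $\sum_i \ppi_i(p') = 1$ and the formula \eqref{eqn:f.varphi.gradient} for $\nabla^c f$, both of which rely on Assumption \ref{ass:regularity} and Theorem \ref{thm:duality}.
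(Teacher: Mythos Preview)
Your proof is correct and follows essentially the same approach as the paper: both compute $c(\theta,\phi')=\psi(\theta-\phi')$ explicitly using the formula \eqref{eqn:f.varphi.gradient} for $\phi'=\nabla^c f(\theta')$, then track the cancellation of the $\log p_n$, $\log p'_n$, and $\log\ppi_n(p')$ terms to recover \eqref{eqn:L.divergence.fgp}. The only cosmetic difference is that the paper works from the self-dual form \eqref{eqn:c.divergence.2} and expands $f^*(\phi')$ via Fenchel's identity, whereas you work directly from \eqref{eqn:c.divergence} and compute $c(\theta',\phi')=-\log\ppi_n(p')$ separately; since Fenchel's identity makes these two routes line-by-line equivalent, there is no substantive distinction.
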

\begin{proof}
Using the primal-dual relation \eqref{eqn:f.varphi.gradient}, we have
\[
\psi(\theta - \phi') = \log \left(\sum_{i = 1}^n e^{\theta_i-\theta_i' + \log \frac{\ppi_i(\theta')}{\ppi_n(\theta')}}\right) = \log \left(\ppi(p') \cdot \frac{p}{p'}\right) - \log \left(\ppi_n(p') \frac{p_n}{p_n'} \right).
\]
Next, by Fenchel's identity (see \eqref{eqn:fenchel.identity}), we have
\[
f^*(\phi') = \psi(\theta' - \phi') - f(\theta') = \psi(\theta' - \phi') - \varphi(\theta') - \psi(\theta').
\]
Using these identities and \eqref{eqn:L.divergence.fgp}, we compute
\begin{equation*}
\begin{split}
D\left(p \mid p'\right) &= \psi(\theta - \phi') - f(\theta) - f^*(\phi')  \\
&= \log \left(\ppi(p') \cdot \frac{p}{p'}\right) - \log \left(\ppi_n(p') \frac{p_n}{p_n'} \right) \\
&\quad \quad  - \left(\varphi(\theta) + \psi(\theta)\right) - \left( \psi(\theta' - \phi') - \varphi(\theta') - \psi(\theta') \right) \\
&= \log \left(\ppi(p') \cdot \frac{p}{p'}\right) - \left(\varphi(\theta) - \varphi(\theta')\right) \\
&= T\left(p \mid p'\right).  
\end{split} 
\end{equation*}
\end{proof}

For computations it is convenient to express $T\left(p \mid p'\right)$ solely in terms of either the primal or dual coordinates. We omit the details of the computations.

\begin{lemma} [Coordinate representations] \label{lem:coord.rep}
For $p, p' \in \Delta_n$ we have
\begin{equation*} 
\begin{split}
T\left(p \mid p'\right) &= \log\left(\sum_{\ell = 1}^n \ppi_{\ell}(\theta') e^{\theta_{\ell} - \theta_{\ell}'}\right) - \left(f(\theta) - f(\theta')\right),\\
T\left(p \mid p'\right) &= \log\left(\sum_{\ell = 1}^n \ppi_{\ell}(\phi) e^{\phi_{\ell} - \phi_{\ell}'}\right) - \left(f^*(\phi') - f^*(\phi)\right).
\end{split}
\end{equation*}
\end{lemma}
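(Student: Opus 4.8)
The plan is to obtain both displays by specializing the self-dual expressions from Proposition \ref{prop:self.dual} and then unwinding the definitions of $f$, $f^*$, and the cost $c$ using the primal-dual relation \eqref{eqn:f.varphi.gradient}. Concretely, for the first identity I would start from Theorem \ref{prop:c.divergence}, which gives $T\left(p \mid p'\right) = D\left(p \mid p'\right) = c(\theta, \phi') - f(\theta) - f^*(\phi')$. The term $f^*(\phi')$ should be eliminated in favor of primal data via Fenchel's identity \eqref{eqn:fenchel.identity}: since $\theta'$ and $\phi'$ are dual to each other, $f^*(\phi') = c(\theta', \phi') - f(\theta')$. Substituting, one gets $T\left(p\mid p'\right) = \bigl(c(\theta,\phi') - c(\theta',\phi')\bigr) - \bigl(f(\theta) - f(\theta')\bigr)$, which is just \eqref{eqn:c.divergence} again, so the real content is showing $c(\theta,\phi') - c(\theta',\phi') = \log\left(\sum_{\ell=1}^n \ppi_\ell(\theta') e^{\theta_\ell - \theta_\ell'}\right)$. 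This is precisely the computation already carried out in the proof of Theorem \ref{prop:c.divergence}: writing $\phi'_i = \theta'_i - \log\frac{\ppi_i(\theta')}{\ppi_n(\theta')}$ from \eqref{eqn:f.varphi.gradient} and using $\psi(x) = \log\sum_{\ell} e^{x_\ell}$ with the convention $\theta_n = \phi_n = 0$, one finds $\psi(\theta - \phi') = \log\bigl(\sum_\ell \ppi_\ell(\theta') e^{\theta_\ell - \theta'_\ell}\bigr) - \log\bigl(\ppi_n(\theta') e^{-\theta'_n}\bigr)$ and $\psi(\theta' - \phi') = -\log \ppi_n(\theta')$, whose difference gives exactly the stated logarithmic term. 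Combining, $T\left(p\mid p'\right) = \log\bigl(\sum_\ell \ppi_\ell(\theta') e^{\theta_\ell - \theta'_\ell}\bigr) - (f(\theta) - f(\theta'))$.

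For the second identity I would argue dually. By Proposition \ref{prop:self.dual}, $T\left(p \mid p'\right) = D\left(p \mid p'\right) = D^*\left(p' \mid p\right)$, and the self-dual expression \eqref{eqn:c.dual.divergence.2} gives $D^*\left(p' \mid p\right) = c(\theta, \phi') - f^*(\phi') - f(\theta)$; alternatively one works directly from \eqref{eqn:c.dual.divergence} with the roles of $p$ and $p'$ swapped. Either way, eliminating $f(\theta)$ via Fenchel ($f(\theta) = c(\theta,\phi) - f^*(\phi)$, valid since $\theta,\phi$ are dual) yields $T\left(p\mid p'\right) = \bigl(c(\theta,\phi') - c(\theta,\phi)\bigr) - \bigl(f^*(\phi') - f^*(\phi)\bigr)$, so it remains to show $c(\theta,\phi') - c(\theta,\phi) = \log\bigl(\sum_\ell \ppi_\ell(\phi) e^{\phi_\ell - \phi'_\ell}\bigr)$. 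Here I would use Lemma \ref{lem:weight.as.derivative}, which identifies $\ppi_i(\phi) = -\partial f^*/\partial \phi_i$, together with the dual analogue of the primal-dual relation — namely $\theta = \nabla^c f^*(\phi)$ expressed componentwise — to run the same $\psi$-manipulation as above with $\theta$ and $\phi$ interchanged and signs adjusted according to $c(\theta,\phi) = \psi(\theta - \phi)$. Symmetry of the cost in the combination $\theta - \phi$ makes the two computations mirror images of each other.

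The only mild subtlety — and the step I'd flag as the place to be careful rather than a genuine obstacle — is bookkeeping the asymmetry of $c$ and the sign conventions: $c(\theta,\phi) = \psi(\theta-\phi)$ is not symmetric, the dual coordinate enters ${\bf p}$ with a minus sign (cf. Definition \ref{def:coordinate}, $p^* = {\bf p}(-\ppi(p))$ in the notation $\boldsymbol{\phi}$), and one must consistently use $\theta_n = \phi_n = 0$ when passing between $\psi(\theta) = \log\sum_{i=1}^{n-1} e^{\theta_i} + \text{(nothing)}$ written with $n$ terms versus $n-1$ terms. Since the paper explicitly says ``we omit the details of the computations,'' a clean writeup would simply present the two reductions to Proposition \ref{prop:self.dual} plus Fenchel, point to the $\psi$-identity already established inside the proof of Theorem \ref{prop:c.divergence} for the first line, and remark that the second follows by the dual computation using Lemma \ref{lem:weight.as.derivative}.
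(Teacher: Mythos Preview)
Your approach is correct and is exactly the natural way to fill in the details the paper omits: reduce to the self-dual expression \eqref{eqn:c.divergence.2}, use Fenchel's identity to trade $f^*(\phi')$ for $f(\theta')$ (respectively $f(\theta)$ for $f^*(\phi)$), and then evaluate the resulting difference of costs via the primal-dual relation \eqref{eqn:f.varphi.gradient}, which gives $e^{\theta_\ell-\phi'_\ell}=\ppi_\ell(\theta')e^{\theta_\ell-\theta'_\ell}/\ppi_n(\theta')$ and hence the desired logarithmic term. One small simplification: for the dual identity you do not actually need Lemma \ref{lem:weight.as.derivative}; the same relation $\theta_i-\phi_i=\log\bigl(\ppi_i/\ppi_n\bigr)$ already suffices to write $e^{\theta_\ell-\phi'_\ell}=\ppi_\ell(\phi)e^{\phi_\ell-\phi'_\ell}/\ppi_n(\phi)$ and finish the computation symmetrically.
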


\subsection{Transport interpretation of the generalized Pythagorean theorem} \label{sec:pqr.transport}
Using Proposition \ref{prop:c.divergence} we give an interesting transport interpretation of the expression \eqref{eqn:pyth} in the generalized Pythagorean theorem (Theorem \ref{thm:pyth}). Let $p, q, r \in \Delta_n$ be given. Let $(\theta^{(j)}, \phi^{(j)})_{1 \leq j \leq 3}$ be the primal and dual coordinates of $p$, $q$ and $r$ respectively. By Proposition \ref{prop:fgp.mcm}, the coupling $(\theta, \phi = \nabla f^c(\theta))$ is $c$-cyclical monotone. Hence coupling $\theta^{(j)}$ with $\phi^{(j)}$ is optimal.

Consider two (suboptimal) perturbations of the optimal coupling:
\begin{itemize}
\item[(i)] (Cyclical perturbation) Couple $\theta^{(1)}$ with $\phi^{(3)}$, $\theta^{(2)}$ with $\phi^{(1)}$, and $\theta^{(3)}$ with $\phi^{(2)}$. The associated cost is
\[
c(\theta^{(1)}, \phi^{(3)}) + c(\theta^{(2)}, \phi^{(1)}) + c(\theta^{(3)}, \phi^{(2)}).
\]
\item[(ii)] (Transposition) Couple $\theta^{(1)}$ with $\phi^{(3)}$, $\theta^{(3)}$ with $\phi^{(1)}$, and keep the coupling $(\theta^{(2)}, \phi^{(2)})$. The associated cost is
\[
c(\theta^{(1)}, \phi^{(3)}) + c(\theta^{(3)}, \phi^{(1)}) + c(\theta^{(2)}, \phi^{(2)}).
\]
\end{itemize} 
Now we ask which perturbation has lower cost. The difference (i) $-$ (ii) is
\[
c(\theta^{(2)}, \phi^{(1)}) + c(\theta^{(3)}, \phi^{(2)}) - c(\theta^{(3)}, \phi^{(1)}) - c(\theta^{(2)}, \phi^{(2)}).
\]
By Proposition \ref{prop:c.divergence}, this is nothing but the difference $T\left(q \mid p\right) + T\left(r \mid q\right) - T\left(r \mid p\right)$. Thus the generalized Pythagorean theorem gives an information geometric characterization of the relative costs of the two perturbations.

\subsection{Examples} \label{sec:examples}
We consider the portfolios in Example \ref{ex:fgp.example}.

\begin{example}[Constant-weighted portfolio]
Let $\pi \in \Delta_n$ be a constant-weighted portfolio. Then $\varphi$ is the cross entropy and we have 
\[
f(\theta) = \varphi(\theta) + \psi(\theta) = \sum_{i = 1}^{n-1} \pi_i \theta_i,
\]
which is an affine function on ${\mathcal{X}}$. Its $c$-transform is also affine. Indeed, we have
\[
f^*(\phi) = \sum_{i = 1}^{n - 1} \pi_i(-\phi_i) + H(\pi),
\]
where $H(\pi) := -\sum_{i = 1}^n \pi_i \log \pi_i$ is the Shannon entropy of $\pi$. For this reason we say that the constant-weighted portfolios are self-dual. The transport map in this case is given by a translation: $\phi = \theta - \left(\log \frac{\pi_1}{\pi_n}, \ldots, \log \frac{\pi_{n-1}}{\pi_n}\right)$. Its L-divergence \eqref{eqn:excess.growth} is given in primal coordinates (see Lemma \ref{lem:coord.rep}) by
\[
T\left(p \mid p'\right) = \log\left(\sum_{\ell = 1}^n \pi_{\ell} e^{\theta_{\ell} - \theta_{\ell}'}\right) - \sum_{\ell = 1}^n \pi_{\ell} (\theta_{\ell} - \theta_{\ell}'),
\]
which is translation invariant. This property is equivalent to the following num\'{e}raire invariance property \cite[Lemma 3.2]{PW13}: for any $w_1, \ldots, w_n > 0$, we have $T\left(q \mid p\right) = T\left(\widetilde{q} \mid \widetilde{p}\right)$ under the mapping
\[
p \mapsto \widetilde{p} = \left(\frac{w_ip_i}{w_1p_1 + \cdots + w_np_n}\right)_{1 \leq i \leq n}.
\]
In fact, it is not difficult to show that this property characterizes the constant-weighted portfolios among L-divergences of exponentially concave functions. Also see \cite[Proposition 4.6]{PW13} for a chain rule analogous to that of relative entropy.
\end{example}

\begin{example}[Diversity-weighted portfolio]
We have $f(\theta) = \frac{1}{\lambda} \psi(\lambda \theta)$. Since 
\[
\log \frac{\ppi_i(\theta)}{\ppi_n(\theta)} = \lambda \theta_i,
\]
the map $\theta \mapsto \phi$ is a scaling: $\phi = (1 - \lambda) \theta$. For the generalized diversity-weighted portfolio in Example \ref{ex:gaussian} the transport map $\theta \mapsto \phi$ is the composition of a scaling and a translation.
\end{example}

\section{Geometric structure induced by L-divergence} \label{sec:geometry.computation}
In this section we derive the geometric structure induced by a given L-divergence $T\left(\cdot \mid \cdot\right)$. As always we impose the regularity conditions in Assumption \ref{ass:regularity}. Using the primal and dual coordinate systems (Definition \ref{def:coordinate}), we compute explicitly the Riemannian metric $g$, the primal connection $\nabla$ (not to be confused with the Euclidean gradient) and the dual connection $\nabla^*$. We call $(g, \nabla, \nabla^*)$ the induced geometric structure. An important fact in information geometry is that the Levi-Civita connection $\nabla^{(0)}$ is not necessarily the right one to use. Nevertheless, by duality we always have $\nabla^{(0)} = \frac{1}{2}\left(\nabla + \nabla^*\right)$.

\subsection{Preliminaries}
For differential geometric concepts such as Riemannian metric and affine connection we refer the reader to \cite[Chapters 5]{A16} whose notations are consistent with ours. For computational convenience we define the geometric structure in terms of coordinate representations. The geometric structure is determined by the L-divergence $T\left(\cdot \mid \cdot\right)$ and is independent of the choice of coordinates; for intrinsic formulations we refer the reader to \cite[Chapter 11]{CU14}. The following definition (which makes sense for a general divergence on a manifold) is taken from \cite[Section 6.2]{A16}.

\begin{definition}[Induced geometric structure]
Given a coordinate system $\xi = (\xi_1, \ldots, \xi_{n-1})$ of $\Delta_n$, the coefficients of the geometric structure $(g, \nabla, \nabla^*)$ are given as follows.
\begin{enumerate}
\item[(i)] The Riemannian metric is given by
\begin{equation} \label{eqn:metric}
g_{ij}(\xi) = - \left.\frac{\partial}{\partial \xi_i} \frac{\partial}{\partial \xi_j'} T\left(\xi \mid \xi'\right) \right|_{\xi = \xi'}, \quad i, j = 1, \ldots, n - 1.
\end{equation}
By Assumption \ref{ass:regularity} the matrix $\left(g_{ij}(\xi)\right)$ is strictly positive definite. The Riemannian inner product and length are denoted by $\langle \cdot, \cdot \rangle$ and $\|\cdot\|$ respectively.
\item[(ii)] The primal connection is given by
\begin{equation} \label{eqn:primal.connection}
\Gamma_{ijk}(\xi) =  - \left.\frac{\partial}{\partial \xi_i} \frac{\partial}{\partial \xi_j} \frac{\partial}{\partial \xi_k'} T\left(\xi \mid \xi'\right) \right|_{\xi = \xi'}, \quad i, j, k = 1, \ldots, n - 1.
\end{equation}
\item[(iii)] The dual connection is given by
\begin{equation} \label{eqn:dual.connection}
\Gamma_{ijk}^*(\xi) =  - \left.\frac{\partial}{\partial \xi_k} \frac{\partial}{\partial \xi_i'} \frac{\partial}{\partial \xi_j'} T\left(\xi \mid \xi'\right) \right|_{\xi = \xi'}, \quad i, j, k = 1, \ldots, n - 1.
\end{equation}
\end{enumerate}
\end{definition}

For a general divergence the above definitions were first introduced in \cite{E83, E92}. If we define the dual divergence by $T^*\left(p \mid p'\right) := T\left(p' \mid p\right)$, the dual connection of $T$ is the primal connection of $T^*$. The primal and dual connections are dual to each other with respect to the Riemannian metric $g$ (see \cite[Theorem 6.2]{A16}). While any divergence induces a geometric structure, it may not enjoy nice properties. For the geometric structure induced by a Bregman divergence, it can be shown that the Riemann-Christoffel curvatures of the primal and dual connections both vanish. Thus we say that the induced geometry is dually flat \cite[Chapter 1]{A16}. We will show that L-divergence gives rise to a different geometry with many interesting properties.

\subsection{Notations} 
We begin by clarifying the notations. Following our convention (see Notation \ref{not:coordinate}), we write $T\left(p \mid p'\right) = T\left(\theta \mid \theta'\right) = T\left(\phi \mid \phi'\right)$ depending on the coordinate system used. The primal and dual coordinate representations have been computed in Lemma \ref{lem:coord.rep}. 

The Riemannian metric will be computed using both the primal and dual coordinate systems. To be explicit about the coordinate system we use $g_{ij}(\theta)$ to denote its coefficients in primal coordinates, and $g_{ij}^*(\phi)$ for its coefficients in dual coordinates:
\[
g_{ij}(\theta) := -\left. \frac{\partial^2}{\partial \theta_i \partial \theta_j'} T\left(\theta \mid \theta'\right) \right|_{\theta = \theta'}, \quad g_{ij}^*(\phi) := -\left. \frac{\partial^2}{\partial \phi_i \partial \phi_j'} T\left(\phi \mid \phi'\right) \right|_{\phi = \phi'}.
\]
The inverses of the matrices $\left(g_{ij}(\theta)\right)$ and $\left( g_{ij}^*(\phi)\right)$ are denoted by $\left(g^{ij}(\theta)\right)$ and $\left( g^{*ij}(\phi)\right)$ respectively.

The primal connection $\nabla$ will be computed using the primal coordinate system:
\[
\Gamma_{ijk}(\theta) :=  -\left.\frac{\partial^3}{\partial \theta_i \partial \theta_j \partial \theta_k'} T\left(\theta \mid \theta'\right)\right|_{\theta = \theta'}, \quad \Gamma_{ij}^k(\theta) := \sum_{m = 1}^{n-1} \Gamma_{ijm}(\theta)g^{mk}(\theta).
\]
The dual connection $\nabla^*$ will be computed using the dual coordinate system:
\[
\Gamma_{ijk}^*(\phi) := -\left.\frac{\partial^3}{\partial \phi_k \partial \phi_i' \partial \phi_j'} T\left(\phi \mid \phi'\right)\right|_{\phi = \phi'}, \quad \Gamma_{ij}^{*k}(\phi) := \sum_{m = 1}^{n-1} \Gamma_{ijm}^*(\phi)g^{*mk}(\phi).
\]

The following notations are useful. For $1 \leq i \leq n$ we define
\begin{equation} \label{eqn:Pi}
\Pi_i(\theta, \theta') := \frac{\ppi_i(\theta') e^{\theta_i - \theta_i'}}{\sum_{\ell = 1}^n \ppi_{\ell}(\theta') e^{\theta_{\ell} - \theta_{\ell}'}}, \quad \Pi^*_i(\phi, \phi') :=\frac{\ppi_i(\phi)e^{\phi_i - \phi_i'}}{\sum_{\ell = 1}^n \ppi_{\ell}(\phi)e^{\phi_{\ell} - \phi_{\ell}'}}.
\end{equation}
As always we adopt the convention $\theta_n = \theta_n' = \phi_n = \phi_n' = 0$. Note that $\Pi_i(\theta, \theta')$ involves the portfolio at $\theta'$ (the second variable) while $\Pi_i^*(\phi, \phi')$ involves the portfolio at $\phi$ (the first variable). The partial derivatives of $\Pi_i$ and $\Pi_i^*$ are given in the next lemma and can be verified by direct differentiation. We let $\delta_{ij}$ be the Kronecker delta and $\delta_{ijk} = \delta_{ij}\delta_{jk}$.

\begin{lemma} [Derivatives of $\Pi_i$ and $\Pi_i^*$]  \label{lem:Pi.derivatives} {\ }
\begin{enumerate}
\item[(i)] For $1 \leq i \leq n$ and $1 \leq j \leq n - 1$, we have
\begin{equation*}
\begin{split}
\frac{\partial \Pi_i(\theta, \theta')}{\partial \theta_j} &= \Pi_i(\theta, \theta') \left( \delta_{ij} - \Pi_j(\theta, \theta')\right), \\
\frac{\partial \Pi_i(\theta, \theta')}{\partial \theta_j'} &= -\Pi_i(\theta, \theta') \left( \delta_{ij} - \Pi_j(\theta, \theta')\right) \\
&\quad + \Pi_i(\theta, \theta') \left( \frac{1}{\ppi_i(\theta')} \frac{\partial \ppi_i}{\partial \theta_j'}(\theta') - \sum_{\ell = 1}^n \Pi_{\ell}(\theta, \theta') \frac{1}{\ppi_{\ell}(\theta')}\frac{\partial \ppi_{\ell}}{\partial \theta_{j}'}(\theta')\right). 
\end{split}
\end{equation*}
\item[(ii)] For $1 \leq i \leq n$ and $1 \leq j \leq n - 1$, we have
\begin{equation*}
\begin{split} 
\frac{\partial \Pi_i^*(\phi, \phi')}{\partial \phi_j} &= \Pi_i^*(\phi, \phi')(\delta_{ij} - \Pi_j^*(\phi, \phi')) \\
  &\quad + \Pi_i^*(\phi, \phi') \left(\frac{1}{\ppi_i(\phi)} \frac{\partial \ppi_i}{\partial \phi_j}(\phi) - \sum_{\ell = 1}^n \Pi_{\ell}^*(\phi, \phi') \frac{1}{\ppi_{\ell}(\phi)} \frac{\partial \ppi_{\ell}}{\partial \phi_j}(\phi)\right), \\
\frac{\partial \Pi_i(\phi, \phi')}{\partial \phi_j'} &= -\Pi_i^*(\phi, \phi')(\delta_{ij} - \Pi_j^*(\phi, \phi')).
\end{split}
\end{equation*}
\end{enumerate}
\end{lemma}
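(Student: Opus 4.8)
The statement to prove is Lemma \ref{lem:Pi.derivatives}, giving the partial derivatives of $\Pi_i(\theta,\theta')$ and $\Pi_i^*(\phi,\phi')$. This is flagged as verifiable "by direct differentiation," so the plan is essentially bookkeeping with the quotient rule, organized so that the softmax-type structure is exploited.

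\medskip

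\noindent\textbf{Plan for part (i).} Write $\Pi_i(\theta,\theta') = N_i / S$ where $N_i := \ppi_i(\theta') e^{\theta_i - \theta_i'}$ (with the convention $\theta_n = \theta_n' = 0$, so $N_n = \ppi_n(\theta')$) and $S := \sum_{\ell=1}^n N_\ell$. For the $\theta_j$-derivative with $1 \le j \le n-1$: since $\ppi_i(\theta')$ does not depend on $\theta$, we have $\partial N_i / \partial \theta_j = \delta_{ij} N_i$ (using $\theta_n \equiv 0$ to handle $i = n$), hence $\partial S/\partial\theta_j = N_j$. The quotient rule gives
\[
\frac{\partial \Pi_i}{\partial \theta_j} = \frac{\delta_{ij} N_i}{S} - \frac{N_i}{S}\cdot\frac{N_j}{S} = \Pi_i(\delta_{ij} - \Pi_j),
\]
which is the first claimed formula. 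For the $\theta_j'$-derivative, note $N_i$ depends on $\theta_j'$ both through $e^{-\theta_i'}$ (contributing $-\delta_{ij} N_i$) and through $\ppi_i(\theta')$ (contributing $(\partial_{\theta_j'}\ppi_i/\ppi_i) N_i$), so $\partial N_i/\partial\theta_j' = N_i\bigl(-\delta_{ij} + \tfrac{1}{\ppi_i(\theta')}\partial_{\theta_j'}\ppi_i(\theta')\bigr)$ and $\partial S/\partial\theta_j' = \sum_\ell N_\ell\bigl(-\delta_{\ell j} + \tfrac{1}{\ppi_\ell}\partial_{\theta_j'}\ppi_\ell\bigr)$. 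Dividing by $S$ and recognizing $N_\ell/S = \Pi_\ell$, the quotient rule again collects into $-\Pi_i(\delta_{ij}-\Pi_j)$ plus the $\ppi$-derivative correction term displayed in the lemma. The only subtlety is the index range: the sums run to $n$ while $j \le n-1$, which is consistent because the $n$-th coordinate is pinned.

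\medskip

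\noindent\textbf{Plan for part (ii).} Here $\Pi_i^*(\phi,\phi') = N_i^*/S^*$ with $N_i^* := \ppi_i(\phi) e^{\phi_i - \phi_i'}$ — note the crucial asymmetry that the weight $\ppi$ is evaluated at the \emph{first} argument $\phi$, opposite to part (i). So the roles of the two variables are swapped: differentiating in $\phi_j$ now picks up both the exponential term and the $\ppi_i(\phi)$ term, giving the formula with the correction; differentiating in $\phi_j'$ touches only $e^{-\phi_i'}$, giving the clean formula $-\Pi_i^*(\delta_{ij}-\Pi_j^*)$. The computation is the mirror image of part (i), so I would simply remark that it follows by the same quotient-rule argument with the roles of $\phi$ and $\phi'$ interchanged, rather than repeating it. (I note in passing that the left-hand side of the last displayed equation in the lemma statement reads $\partial\Pi_i(\phi,\phi')/\partial\phi_j'$, which should be $\partial\Pi_i^*(\phi,\phi')/\partial\phi_j'$; this is a typo in the statement, and the proof produces the starred quantity.)

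\medskip

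\noindent\textbf{Main obstacle.} There is no real mathematical obstacle — the content is the quotient rule plus the observation that $N_\ell/S = \Pi_\ell$ turns every normalization factor into a $\Pi$. The one place to be careful is the treatment of the $n$-th index: all sums in the numerators and in $S$, $S^*$ range over $\ell = 1,\dots,n$, while the differentiation indices $i, j$ are constrained by $i \le n$ and $j \le n-1$, and the conventions $\theta_n = \theta_n' = \phi_n = \phi_n' = 0$ must be invoked to make $\partial N_n/\partial\theta_j = 0$ (equivalently $\delta_{nj} = 0$ for $j \le n-1$) consistent. Once this is handled, the formulas fall out immediately.
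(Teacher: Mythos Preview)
Your proposal is correct and is exactly the approach the paper takes: the paper gives no proof at all beyond the remark that the formulas ``can be verified by direct differentiation,'' and your plan carries out precisely that quotient-rule computation with the softmax structure $N_\ell/S = \Pi_\ell$. Your observation about the typo $\Pi_i$ vs.\ $\Pi_i^*$ in the last displayed equation is also correct.
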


We also note the following easy fact which is used several times below. Since $\sum_{i = 1}^n \Pi_i(\theta, \theta') \equiv \sum_{i = 1}^n \Pi_i^*(\phi, \phi') \equiv 1$, the partial derivatives of $\sum_i \Pi_i$ and $\sum_i \Pi_i^*$ are all zero.

\begin{lemma} [Derviatives of $\ppi(\cdot)$]  \label{lem:pi.derivatives}
For $1 \leq i \leq n$ and $1 \leq j \leq n - 1$, we have
\begin{equation} \label{eqn:pi.derivative}
\begin{split}
\frac{\partial \ppi_i}{\partial \theta_j}(\theta) &= \ppi_i(\theta)(\delta_{ij} - \ppi_j(\theta)) - \ppi_i(\theta) \left( \frac{\partial \phi_i}{\partial \theta_j}(\theta) - \sum_{\ell = 1}^{n-1} \ppi_{\ell}(\theta) \frac{\partial \phi_{\ell}}{\partial \theta_j}(\theta)\right), \\
\frac{\partial \ppi_i}{\partial \phi_j}(\phi) &= -\ppi_i(\phi)(\delta_{ij} - \ppi_j(\phi)) + \ppi_i(\phi) \left(\frac{\partial \theta_i}{\partial \phi_j}(\phi) - \sum_{\ell = 1}^{n-1} \ppi_{\ell}(\phi) \frac{\partial \theta_{\ell}}{\partial \phi_j}(\phi) \right).
\end{split}
\end{equation} 
\end{lemma}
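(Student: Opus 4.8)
The plan is to derive both formulas by logarithmic differentiation starting from the primal–dual relation $\phi_i = \theta_i - \log\frac{\ppi_i(\theta)}{\ppi_n(\theta)}$ from \eqref{eqn:portfolio.transport} (equivalently \eqref{eqn:f.varphi.gradient}), together with the normalization $\sum_{i=1}^n \ppi_i \equiv 1$. First I would rewrite the relation as $\log\ppi_i(\theta) - \log\ppi_n(\theta) = \theta_i - \phi_i(\theta)$ for $1 \le i \le n-1$, and also note $\log\ppi_n(\theta) - \log\ppi_n(\theta) = 0 = \theta_n - \phi_n$ trivially since $\theta_n = \phi_n = 0$; hence the single identity $\log\ppi_i(\theta) - \log\ppi_n(\theta) = \theta_i - \phi_i(\theta)$ holds for all $1 \le i \le n$ with the conventions in force. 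Differentiating with respect to $\theta_j$ ($1 \le j \le n-1$) gives
\[
\frac{1}{\ppi_i(\theta)}\frac{\partial\ppi_i}{\partial\theta_j}(\theta) - \frac{1}{\ppi_n(\theta)}\frac{\partial\ppi_n}{\partial\theta_j}(\theta) = \delta_{ij} - \frac{\partial\phi_i}{\partial\theta_j}(\theta),
\]
where again the $i = n$ case reads $0 = \delta_{nj} - \frac{\partial\phi_n}{\partial\theta_j} = 0$.

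The second step is to eliminate the unknown terms $\frac{1}{\ppi_n}\frac{\partial\ppi_n}{\partial\theta_j}$ and extract $\frac{\partial\ppi_i}{\partial\theta_j}$ alone. Write $c_j := \frac{1}{\ppi_n(\theta)}\frac{\partial\ppi_n}{\partial\theta_j}(\theta) - \delta_{nj}$ (a quantity independent of $i$), so that the displayed identity becomes $\frac{1}{\ppi_i}\frac{\partial\ppi_i}{\partial\theta_j} = \delta_{ij} - \frac{\partial\phi_i}{\partial\theta_j} + c_j$, i.e.
\[
\frac{\partial\ppi_i}{\partial\theta_j}(\theta) = \ppi_i(\theta)\Bigl(\delta_{ij} - \frac{\partial\phi_i}{\partial\theta_j}(\theta) + c_j\Bigr).
\]
Now I use $\sum_{i=1}^n \ppi_i \equiv 1$, hence $\sum_{i=1}^n \frac{\partial\ppi_i}{\partial\theta_j} = 0$. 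Summing the last display over $i$ and solving for $c_j$ yields $c_j = -\sum_{\ell=1}^n \ppi_\ell(\theta)\bigl(\delta_{\ell j} - \frac{\partial\phi_\ell}{\partial\theta_j}(\theta)\bigr) = -\ppi_j(\theta) + \sum_{\ell=1}^{n-1}\ppi_\ell(\theta)\frac{\partial\phi_\ell}{\partial\theta_j}(\theta)$ (the sum over $\ell$ in the $\phi$-term runs to $n-1$ since $\phi_n \equiv 0$). Substituting this value of $c_j$ back into the boxed expression and collecting terms gives exactly the first line of \eqref{eqn:pi.derivative}.

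For the second formula I would run the identical argument with the roles of $\theta$ and $\phi$ interchanged, using the inverse relation $\theta_i = \phi_i + \log\frac{\ppi_i(\phi)}{\ppi_n(\phi)}$ (valid on ${\mathcal Y}'$ by Theorem \ref{thm:duality}(ii), since $\nabla^c f$ is a diffeomorphism onto ${\mathcal Y}'$); differentiating in $\phi_j$ produces the sign flip on the $\delta_{ij} - \ppi_j$ term and the $+\ppi_i(\phi)\bigl(\frac{\partial\theta_i}{\partial\phi_j} - \sum_\ell \ppi_\ell\frac{\partial\theta_\ell}{\partial\phi_j}\bigr)$ correction, matching the second line of \eqref{eqn:pi.derivative}. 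The only genuinely non-routine point is keeping the index conventions $\theta_n = \phi_n = 0$ and the ranges of the summations consistent throughout — in particular remembering that $\ppi$ has $n$ components while the coordinate summations $\sum_\ell \ppi_\ell \frac{\partial\phi_\ell}{\partial\theta_j}$ run only to $n-1$; everything else is a direct consequence of logarithmic differentiation plus the normalization constraint, so no substantial obstacle is expected.
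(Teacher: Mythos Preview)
Your proposal is correct and follows essentially the same approach as the paper: both start from the primal--dual relation $\theta_i - \phi_i = \log\frac{\ppi_i}{\ppi_n}$ (equivalently $\ppi_i = e^{\theta_i - \phi_i}/\sum_\ell e^{\theta_\ell - \phi_\ell}$) and differentiate, regarding one coordinate as a function of the other. The paper differentiates the explicit softmax expression directly (so the normalization is built in), whereas you differentiate the log ratio and then invoke $\sum_i \ppi_i \equiv 1$ separately to identify the term $c_j$; this is only a minor reorganization of the same computation.
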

\begin{proof}
We prove the second formula and the proof of the first is similar. Using \eqref{eqn:portfolio.transport}, we write
\[
\ppi_i(\phi) = \frac{e^{\theta_i - \phi_i}}{\sum_{\ell = 1}^n e^{\theta_{\ell} - \phi_{\ell}}}
\]
and regard $\theta$ as a function of $\phi$ (recall that $\theta_n = \phi_n = 0$). Then
\begin{equation*}
\begin{split}
\frac{\partial \ppi_i}{\partial \phi_j}(\phi) &= \frac{e^{\theta_i - \phi_i} \left(\frac{\partial \theta_i}{\partial \phi_j}(\phi) -  \delta_{ij}\right)}{\sum_{\ell = 1}^n e^{\theta_{\ell} - \phi_{\ell}}} - \frac{e^{\theta_i - \phi_i}}{\left(\sum_{\ell = 1}^n e^{\theta_{\ell} - \phi_{\ell}}\right)^2} \sum_{\ell = 1}^n e^{\theta_{\ell} - \phi_{\ell}} \left( \frac{\partial \theta_{\ell}}{\partial \phi_j}(\phi) - \delta_{\ell j}\right) \\
&= -\ppi_i(\phi)(\delta_{ij} - \ppi_j(\phi)) + \ppi_i(\phi) \left( \frac{\partial \theta_i}{\partial \phi_j}(\phi) - \sum_{\ell = 1}^{n-1} \ppi_{\ell}(\phi) \frac{\partial \theta_{\ell}}{\partial \phi_j}(\phi) \right).
\end{split}
\end{equation*}
Note that the $n$th term of the sum is omitted because $\theta_n = 0$.
\end{proof}

Thanks to these formulas, computations in the primal and dual coordinates are very similar except for a change of sign. In the following we will often give details for one coordinate system and leave the other one to the reader.

Last but not least, let $\frac{\partial \phi}{\partial \theta}(\theta) = \left( \frac{\partial \phi_i}{\partial \theta_j}(\theta)\right)$ be the Jacobian of the change of coordinate map $\theta \mapsto \phi$. Similarly, we let
$\frac{\partial \theta}{\partial \phi}(\phi) = \left( \frac{\partial \theta_i}{\partial \phi_j}(\phi) \right)$ be the Jacobian of the inverse map $\phi \mapsto \theta$. The two Jacobians are inverses of each other, i.e.,
\begin{equation} \label{eqn:jacobian.identity}
\frac{\partial \phi}{\partial \theta}(\theta) \frac{\partial \theta}{\partial \phi}(\phi)  = I,
\end{equation}
where $I$ is the identity matrix. We denote the transpose of a matrix $A$ by $A^{\top}$.

\subsection{Riemannian metric}
For intuition, we first compute the Riemannian inner product using Euclidean coordinates. We let $T_p\Delta_n$ be the tangent space at $p$.

\begin{proposition} \label{prop:metric}
Let $u, v \in T_p\Delta_n$ be represented in Euclidean coordinates, i.e., $u = (u_1, \ldots, u_n) \in {\mathbb{R}}^n$ and $u_1 + \cdots + u_n = 0$, and similarly for $v$. Then
\begin{equation} \label{eqn:metric.euclidean}
\begin{split}
\langle u, v \rangle &= u^{\top} \left(-\Hess \varphi(p) - \nabla \varphi(p) \nabla \varphi(p)^{\top} \right) v\\
  &= u^{\top} \left( \frac{-1}{\Phi(p)} \Hess \Phi(p)\right) v,
\end{split}
\end{equation}
where $\Phi = e^{\varphi}$. 
\end{proposition}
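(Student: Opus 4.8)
## Proof proposal for Proposition 2.17 (the Riemannian metric in Euclidean coordinates)

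The plan is to compute the metric directly from its defining formula \eqref{eqn:metric}, namely $g_{ij}(\xi) = -\partial_{\xi_i}\partial_{\xi_j'} T(\xi \mid \xi')|_{\xi=\xi'}$, but expressed coordinate-free: for tangent vectors $u,v$, we want $\langle u,v\rangle = -\frac{d}{ds}\frac{d}{dt} T(\gamma(t) \mid \sigma(s))|_{s=t=0}$ where $\gamma,\sigma$ are curves through $p$ with velocities $u,v$. Since the metric is a tensor this is independent of the curves chosen, so I would work in the ambient Euclidean coordinate $p$ on the simplex, representing tangent vectors as vectors in $\mathbb{R}^n$ summing to zero. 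The starting point is the expression \eqref{eqn:L.divergence} for the L-divergence, $T(q\mid p) = \log(1 + \nabla\varphi(p)\cdot(q-p)) - (\varphi(q)-\varphi(p))$, which is already written in Euclidean coordinates and is the most convenient form here.

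First I would take one derivative in the first slot $q$: $\partial_{q_i} T(q\mid p) = \frac{(\nabla\varphi(p))_i}{1 + \nabla\varphi(p)\cdot(q-p)} - (\nabla\varphi(q))_i$. Next I would differentiate this in the second slot $p_j$ and then set $q = p$. When $q=p$ the denominator $1 + \nabla\varphi(p)\cdot(q-p)$ becomes $1$, and the term $-(\nabla\varphi(q))_i$ does not depend on $p$ so it drops out. Differentiating the first term in $p_j$ produces, after setting $q=p$: a contribution $\frac{\partial_{p_j}(\nabla\varphi(p))_i}{1} = \partial_i\partial_j\varphi(p)$ from the numerator, minus a contribution $(\nabla\varphi(p))_i \cdot \partial_{p_j}[\nabla\varphi(p)\cdot(q-p)]|_{q=p}$ from differentiating the denominator. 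The latter derivative, evaluated at $q=p$, picks out only the term where $\partial_{p_j}$ hits the explicit $-p$, giving $-(\nabla\varphi(p))_j$ (the terms where $\partial_{p_j}$ hits $\nabla\varphi(p)$ are multiplied by $(q-p)$ and vanish at $q=p$). Collecting signs, $\partial_{q_i}\partial_{p_j}T(q\mid p)|_{q=p} = \Hess\varphi(p)_{ij} + (\nabla\varphi(p))_i(\nabla\varphi(p))_j$, and hence $g_{ij} = -\Hess\varphi(p)_{ij} - (\nabla\varphi(p))_i(\nabla\varphi(p))_j$, which contracted against $u,v$ gives the first line of \eqref{eqn:metric.euclidean}. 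The second line then follows from the elementary identity $\Hess\Phi = \Hess(e^\varphi) = e^\varphi(\Hess\varphi + \nabla\varphi\,\nabla\varphi^\top)$, i.e.\ $-\frac{1}{\Phi}\Hess\Phi = -\Hess\varphi - \nabla\varphi\,\nabla\varphi^\top$, so the two expressions coincide.

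One subtlety I would address carefully is that \eqref{eqn:metric} defines the metric via an $(n-1)$-dimensional coordinate chart, whereas the formula \eqref{eqn:metric.euclidean} uses the redundant $n$-dimensional Euclidean coordinates on the simplex; I need to check that pairing the symmetric bilinear form $-\Hess\varphi(p) - \nabla\varphi(p)\nabla\varphi(p)^\top$ against vectors $u,v$ with $\sum u_i = \sum v_i = 0$ is well-defined and reproduces the chart computation. This is a standard point: restricting a quadratic form on $\mathbb{R}^n$ to the hyperplane $\{\sum u_i = 0\}$ and then using any affine chart of the simplex gives the same answer, because the L-divergence $T(q\mid p)$ is an intrinsically defined function on $\Delta_n \times \Delta_n$ and \eqref{eqn:metric} is the Hessian of a function, hence transforms tensorially; one may simply verify it by substituting, say, $p_n = 1 - \sum_{i<n} p_i$ and matching with the primal-coordinate formula, or invoke Assumption \ref{ass:regularity}(ii) to note the form is nondegenerate on the hyperplane. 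I expect this bookkeeping — rather than the differentiation itself, which is short — to be the only real point requiring care; the derivative computation is a two-line exercise once the denominator-at-diagonal simplification is observed.
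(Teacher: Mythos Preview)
Your proposal is correct and takes essentially the same approach as the paper: write out the L-divergence explicitly in Euclidean coordinates and differentiate at the diagonal, then use $\Hess e^\varphi = e^\varphi(\Hess\varphi + \nabla\varphi\,\nabla\varphi^\top)$ for the second line. The only minor difference is the starting point: the paper invokes the equivalent characterization $\|v\|^2 = \left.\frac{d^2}{dt^2} T(p+tv \mid p)\right|_{t=0}$ (two derivatives in the first slot, citing \cite[Proposition 11.3.1]{CU14}) and then polarizes, whereas you work directly from the mixed-derivative definition \eqref{eqn:metric}; your route is marginally more self-contained since it avoids the external citation, but the computations are otherwise identical in spirit and length.
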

\begin{proof}
By \cite[Proposition 11.3.1]{CU14} we have $\|v\|^2 = \left.\frac{d^2}{dt^2} T\left(p + tv \mid p\right)\right|_{t = 0}$, where
\[
T\left(p + tv \mid p\right) = \log\left(1 + t\nabla \varphi(p) \cdot v\right) - \left(\varphi(p + tv) - \varphi(p)\right).
\]
Differentiating two times and setting $t = 0$ gives the first equality in \eqref{eqn:metric.euclidean} when $u = v$, and polarizing gives the general case. The second equality follows from the chain rule.
\end{proof}

\begin{theorem}[Riemannian metric] \label{thm:Riemannian.metric} {\ }
\begin{enumerate}
\item[(i)] Under the primal coordinate system, the Riemannian metric is given by
\begin{equation} \label{eqn:g.primal}
\begin{split}
g_{ij}(\theta) &= \ppi_i(\theta)(\delta_{ij} - \ppi_j(\theta)) - \frac{\partial \ppi_i}{\partial \theta_j}(\theta).
\end{split}
\end{equation}
Its inverse is given by
\begin{equation} \label{eqn:g.primal.inverse}
g^{ij}(\theta) = \frac{1}{\ppi_j(\theta)} \frac{\partial \theta_i}{\partial \phi_j}(\phi) + \frac{1}{\ppi_n(\theta)} \sum_{\ell = 1}^{n - 1} \frac{\partial \theta_i}{\partial \phi_{\ell}}(\phi).
\end{equation}
\item[(ii)] Under the dual coordinate system, the Riemannian metric is given by
\begin{equation} \label{eqn:g.dual}
\begin{split}
g_{ij}^*(\phi) = \ppi_i(\phi)(\delta_{ij} - \ppi_j(\phi)) + \frac{\partial \ppi_i}{\partial \phi_j}(\phi).
\end{split}
\end{equation}
Its inverse is given by
\begin{equation} \label{eqn:g.dual.inverse}
g^{*ij}(\phi) = \frac{1}{\ppi_j(\phi)} \frac{\partial \phi_i}{\partial \theta_j}(\theta) + \frac{1}{\ppi_n(\phi)} \sum_{\ell = 1}^{n-1} \frac{\partial \phi_i}{\partial \theta_{\ell}}(\theta).
\end{equation}
\end{enumerate}
\end{theorem}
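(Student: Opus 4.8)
The plan is to compute the ``forward'' formulas \eqref{eqn:g.primal} and \eqref{eqn:g.dual} by differentiating the coordinate representations of the L-divergence from Lemma~\ref{lem:coord.rep}, and then to read off the inverses \eqref{eqn:g.primal.inverse} and \eqref{eqn:g.dual.inverse} by recognizing the metric matrix as a product of three easily invertible factors.

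For (i), start from the primal representation $T\left(\theta \mid \theta'\right) = \log\bigl(\sum_{\ell=1}^n \ppi_\ell(\theta') e^{\theta_\ell - \theta_\ell'}\bigr) - \bigl(f(\theta) - f(\theta')\bigr)$. Since $\theta_n = 0$, differentiating in $\theta_i$ (for $1 \le i \le n-1$) isolates the $\ell = i$ summand inside the logarithm, producing $\Pi_i(\theta, \theta')$ as in \eqref{eqn:Pi}, while the $f$-term contributes $-\ppi_i(\theta)$ by Lemma~\ref{lem:weight.as.derivative}; thus $\partial_{\theta_i} T = \Pi_i(\theta, \theta') - \ppi_i(\theta)$. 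The second term does not depend on $\theta'$, so $\partial_{\theta_j'}\partial_{\theta_i} T = \partial_{\theta_j'}\Pi_i(\theta, \theta')$, which Lemma~\ref{lem:Pi.derivatives}(i) evaluates. Setting $\theta = \theta'$, using $\Pi_i(\theta, \theta) = \ppi_i(\theta)$ (the weights sum to one) together with $\sum_{\ell=1}^n \partial_{\theta_j}\ppi_\ell(\theta) = 0$ (again because $\sum_\ell \ppi_\ell \equiv 1$) collapses the expression to $-\ppi_i(\theta)\bigl(\delta_{ij} - \ppi_j(\theta)\bigr) + \partial_{\theta_j}\ppi_i(\theta)$; negating yields \eqref{eqn:g.primal}. (Symmetry in $i,j$ is automatic, since $\ppi_i = \partial_{\theta_i} f$ makes $\partial_{\theta_j}\ppi_i = \partial^2_{\theta_i \theta_j} f$.) The dual formula \eqref{eqn:g.dual} comes out the same way from the dual representation in Lemma~\ref{lem:coord.rep}, Lemma~\ref{lem:Pi.derivatives}(ii), and $\partial f^*/\partial \phi_i = -\ppi_i$; the lone difference is a sign picked up from $\partial_{\phi_j'} e^{\phi_j - \phi_j'}$, which is exactly what turns the ``$-$'' in \eqref{eqn:g.primal} into the ``$+$'' in \eqref{eqn:g.dual}.

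For the inverses, feed the first identity of Lemma~\ref{lem:pi.derivatives} into \eqref{eqn:g.primal}: the ``$\ppi_i(\delta_{ij} - \ppi_j)$'' pieces cancel and one is left with
\[
g_{ij}(\theta) = \ppi_i(\theta)\Bigl( \tfrac{\partial\phi_i}{\partial\theta_j}(\theta) - \textstyle\sum_{\ell=1}^{n-1} \ppi_\ell(\theta) \tfrac{\partial\phi_\ell}{\partial\theta_j}(\theta)\Bigr).
\]
Writing $D = \diag(\ppi_1, \dots, \ppi_{n-1})$, $J = \bigl(\partial\phi_i/\partial\theta_j\bigr)_{1\le i,j\le n-1}$, and $\mathbf 1 = (1, \dots, 1)^\top$, $\mathbf a = (\ppi_1, \dots, \ppi_{n-1})^\top \in {\mathbb{R}}^{n-1}$, this says $g = D\,(I - \mathbf 1 \mathbf a^\top)\, J$. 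Each factor inverts easily: $J^{-1} = \bigl(\partial\theta_i/\partial\phi_j\bigr)$ by \eqref{eqn:jacobian.identity}, $D^{-1}$ is diagonal, and since $\mathbf a^\top \mathbf 1 = \sum_{\ell=1}^{n-1} \ppi_\ell = 1 - \ppi_n$ (and $\ppi_n > 0$ by Assumption~\ref{ass:regularity}), the Sherman--Morrison formula gives $(I - \mathbf 1 \mathbf a^\top)^{-1} = I + \ppi_n^{-1}\, \mathbf 1 \mathbf a^\top$. Hence $g^{-1} = J^{-1}\bigl(I + \ppi_n^{-1}\mathbf 1\mathbf a^\top\bigr)D^{-1}$, and the identity $\mathbf a^\top D^{-1} = \mathbf 1^\top$ turns $\mathbf 1 \mathbf a^\top D^{-1}$ into the all-ones matrix, so that the $(i,j)$ entry is $\ppi_j^{-1}\,\partial\theta_i/\partial\phi_j + \ppi_n^{-1}\sum_{\ell=1}^{n-1} \partial\theta_i/\partial\phi_\ell$, which is \eqref{eqn:g.primal.inverse}. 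The dual inverse \eqref{eqn:g.dual.inverse} follows verbatim from the second identity of Lemma~\ref{lem:pi.derivatives} with $\theta$ and $\phi$ interchanged.

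None of this is deep; the one thing that needs care is the distinguished index $n$ — the coordinates range over $1, \dots, n-1$ while portfolio weights have $n$ components, and $\ppi_n$ enters precisely through $\mathbf a^\top \mathbf 1 = 1 - \ppi_n$ — together with making sure that every simplification using $\sum_\ell \ppi_\ell \equiv 1$ is applied over the full range $\ell = 1, \dots, n$. I expect the matrix-factorization step (spotting $g = D(I - \mathbf 1\mathbf a^\top)J$ and inverting by Sherman--Morrison) to be the main, if still elementary, hurdle; the pedestrian alternative is to multiply the claimed $g^{ij}$ against \eqref{eqn:g.primal} and check directly that $\sum_j g^{ij} g_{jk} = \delta_{ik}$, which works but is messier.
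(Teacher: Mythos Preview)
Your proposal is correct and follows essentially the same approach as the paper: differentiate the coordinate representation of $T$ from Lemma~\ref{lem:coord.rep}, apply Lemma~\ref{lem:Pi.derivatives} and set $\theta=\theta'$ for \eqref{eqn:g.primal}, then substitute Lemma~\ref{lem:pi.derivatives} to factor $g = \diag(\ppi)(I-\mathbf{1}\ppi^\top)\,\partial\phi/\partial\theta$ and invert via Sherman--Morrison. Your explicit remark that $\mathbf a^\top D^{-1}=\mathbf 1^\top$ (collapsing $\mathbf 1\mathbf a^\top D^{-1}$ to the all-ones matrix) is a nice touch the paper leaves implicit.
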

\begin{proof}
(i) By Lemma \ref{lem:coord.rep} and Lemma \ref{lem:Pi.derivatives}, we compute
\begin{align}
\frac{\partial}{\partial \theta_i} T \left(\theta \mid \theta'\right) &= \Pi_i(\theta, \theta') - \ppi_i(\theta), \label{eqn:T.partial.i} \\ 
\frac{\partial}{\partial \theta_i'} T \left(\theta \mid \theta'\right) &= -\Pi_i(\theta, \theta') + \ppi_i(\theta') + \sum_{\ell = 1}^n \Pi_{\ell}(\theta, \theta') \frac{1}{\ppi_{\ell}(\theta')} \frac{\partial \ppi_{\ell}}{\partial \theta_i'}(\theta'). \label{eqn:T.partial.i.prime}
\end{align}
Differentiating \eqref{eqn:T.partial.i} with respect to $\theta_j'$, we have
\begin{equation} \label{eqn:T.second.derivative}
\begin{split}
\frac{\partial^2}{\partial \theta_i \partial \theta_j'} T\left(\theta \mid \theta'\right) &= -\Pi_i(\theta, \theta') \left(\delta_{ij} - \Pi_j(\theta, \theta')\right) \\
& \quad + \Pi_i(\theta, \theta')  \left( \frac{1}{\ppi_i(\theta')}\frac{\partial \ppi_i}{\partial \theta_j'}(\theta') - \sum_{\ell = 1}^n \Pi_{\ell}(\theta, \theta') \frac{1}{\ppi_{\ell}(\theta')}\frac{\partial \ppi_{\ell}}{\partial \theta_j'}(\theta')\right). 
\end{split}
\end{equation}
Setting $\theta = \theta'$, we get $g_{ij}(\theta) = \ppi_i(\theta)(\delta_{ij} - \ppi_j(\theta)) - \frac{\partial \ppi_i}{\partial \theta_j}(\theta)$.

By Lemma \ref{lem:pi.derivatives}, we have the alternative expression
\begin{equation} \label{eqn:gij.alternative}
g_{ij}(\theta) = \ppi_i(\theta) \left( \frac{\partial \phi_i}{\partial \theta_j}(\theta) - \sum_{\ell = 1}^{n-1} \ppi_{\ell}(\theta) \frac{\partial \phi_{\ell}}{\partial \theta_j}(\theta)\right).
\end{equation}
Expressing \eqref{eqn:gij.alternative} in matrix form, we have
\begin{equation} \label{eqn:g.matrix}
\left(g_{ij}(\theta)\right) = \diag(\ppi(\theta)) (I - {\bf 1}\ppi^{\top}(\theta)) \frac{\partial \phi}{\partial \theta}(\theta),
\end{equation}
where here $\ppi(\theta) = (\ppi_1(\theta), \ldots, \ppi_{n-1}(\theta))^{\top}$, ${\bf 1} = {\bf 1}_{n-1} = (1, \ldots, 1)^{\top}$ and $I = I_{n-1}$ is the identity matrix.

To invert \eqref{eqn:g.matrix} we use the fact that
\[
(I - {\bf 1} \ppi^{\top}(\theta))^{-1} = I + \frac{{\bf 1} \ppi^{\top}(\theta)}{\ppi_n(\theta)}.
\]
This can be verified directly or seen as a special case of the Sherman-Morrison formula. Using \eqref{eqn:jacobian.identity}, we have
\[
\left(g^{ij}(\theta)\right) = \frac{\partial \theta}{\partial \phi}(\phi) \left(I + \frac{{\bf 1} \ppi^{\top}(\theta)}{\ppi_n(\theta)}\right) \diag\left(\frac{1}{\ppi(\theta)}\right).
\]
Now \eqref{eqn:g.primal.inverse} follows by expanding the matrix product.

(ii) The proofs of \eqref{eqn:g.dual} and \eqref{eqn:g.dual.inverse} follow the same lines. For later use we record the following formulas:
\begin{align}
\frac{\partial}{\partial \phi_i} T\left( \phi \mid \phi'\right) &= \Pi_i^*(\phi, \phi') - \ppi_i(\phi) + \sum_{\ell = 1}^n \frac{1}{\ppi_{\ell}(\phi)}\frac{\partial \ppi_{\ell}}{\partial \phi_i}(\phi) \Pi_{\ell}^*(\phi, \phi'), \label{eqn:T.derivative.phi.1}\\
\frac{\partial}{\partial \phi_i'} T\left( \phi \mid \phi'\right) &= -\Pi_i^*(\phi, \phi') + \ppi_i(\phi'), \label{eqn:T.derivative.phi.2}
\end{align}
\begin{equation} \label{eqn:T.sec.derivative}
\begin{split}
\frac{\partial^2}{\partial \phi_i \partial \phi_j'} T\left( \phi \mid \phi'\right) &= -\Pi_j^*(\phi, \phi')(\delta_{ij} - \Pi_i^*(\phi, \phi')) \\ 
&\quad - \Pi_j^*(\phi, \phi') \left(\frac{1}{\ppi_j(\phi)} \frac{\partial \ppi_j}{\partial \phi_i}(\phi) - \sum_{\ell = 1}^n \frac{1}{\ppi_{\ell}(\phi)} \frac{\partial \ppi_{\ell}}{\partial \phi_i}(\phi) \Pi_{\ell}^*(\phi, \phi')\right).
\end{split}
\end{equation}
\end{proof}

\begin{remark}
By Lemma \ref{lem:weight.as.derivative} we have
\begin{equation} \label{eqn:pi.symmetry}
\frac{\partial \ppi_i}{\partial \theta_j}(\theta) = \frac{\partial^2}{\partial \theta_i \partial \theta_j} f(\theta) = \frac{\partial \ppi_j}{\partial \theta_i}(\theta).
\end{equation}
Thus the right hand side of \eqref{eqn:g.primal} is symmetric in $i$ and $j$ despite its appearence. Similarly, we have $\frac{\partial \ppi_i}{\partial \phi_j} = \frac{\partial \ppi_j}{\partial \phi_i}$.
\end{remark}

\subsection{Primal and dual connections}
\begin{theorem} [Primal and dual connections] \label{thm:induced.connections} \label{thm:connection}{\ }
\begin{enumerate}
\item[(i)]  Under the primal coordinate system, the coefficients of the primal connection $\nabla$ is given by
\begin{eqnarray}
\Gamma_{ijk}(\theta) &=& \delta_{ij} g_{ik}(\theta) - \ppi_i(\theta) g_{jk}(\theta) - \ppi_j(\theta) g_{ik}(\theta), \label{eqn:fgp.primal.connection}\\
\Gamma_{ij}^k(\theta) &=& \delta_{ijk} - \delta_{ik} \ppi_j(\theta) - \delta_{jk} \ppi_i(\theta). \label{eqn:fgp.primal.connection.2}
\end{eqnarray}
\item[(ii)] Under the dual coordinate system, the coefficients of the dual connection $\nabla^*$ is given by
\begin{eqnarray}
\Gamma_{ijk}^*(\phi) &=& -\delta_{ij} g_{ik}^*(\phi) + \ppi_{i}(\phi) g_{jk}^*(\phi) + \ppi_{j}(\phi) g_{ik}^*(\phi),\\ \label{eqn:fgp.dual.connection}
\Gamma_{ij}^{*k}(\phi) &=& -\delta_{ijk} + \delta_{ik} \ppi_{j}(\phi) + \delta_{jk} \ppi_{i}(\phi). \label{eqn:fgp.dual.connection.2}
\end{eqnarray}
\end{enumerate}
\end{theorem}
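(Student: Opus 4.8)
The plan is to read off the connection coefficients by differentiating once more the first-order derivatives of the L-divergence that were already computed in the proof of Theorem~\ref{thm:Riemannian.metric}, and then to obtain the contracted coefficients $\Gamma^k_{ij}$ and $\Gamma^{*k}_{ij}$ simply by multiplying with the inverse metric. For the primal connection I would start from \eqref{eqn:T.partial.i}, $\frac{\partial}{\partial\theta_i}T(\theta\mid\theta') = \Pi_i(\theta,\theta') - \ppi_i(\theta)$, differentiate in $\theta_j$ using the first formula of Lemma~\ref{lem:Pi.derivatives}(i) to obtain
\[
\frac{\partial^2}{\partial\theta_i\partial\theta_j}T(\theta\mid\theta') = \Pi_i(\theta,\theta')\big(\delta_{ij} - \Pi_j(\theta,\theta')\big) - \frac{\partial\ppi_i}{\partial\theta_j}(\theta),
\]
and then differentiate in $\theta_k'$. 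Since $\frac{\partial\ppi_i}{\partial\theta_j}(\theta)$ does not depend on $\theta'$, the entire calculation reduces to the single one-variable derivative $\frac{\partial}{\partial\theta_k'}\Pi_i(\theta,\theta')$, which is given by the second formula of Lemma~\ref{lem:Pi.derivatives}(i).

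The key observation is that this $\theta'$-derivative collapses, on the diagonal $\theta=\theta'$, to minus the Riemannian metric. Indeed, using $\Pi_i(\theta,\theta)=\ppi_i(\theta)$ together with $\sum_{\ell=1}^n \frac{\partial\ppi_\ell}{\partial\theta_k}(\theta) = \frac{\partial}{\partial\theta_k}\big(\sum_\ell \ppi_\ell(\theta)\big) = 0$, the second formula of Lemma~\ref{lem:Pi.derivatives}(i) gives
\[
\frac{\partial}{\partial\theta_k'}\Pi_i(\theta,\theta')\Big|_{\theta=\theta'} = -\ppi_i(\theta)\big(\delta_{ik} - \ppi_k(\theta)\big) + \frac{\partial\ppi_i}{\partial\theta_k}(\theta) = -g_{ik}(\theta),
\]
the last equality being \eqref{eqn:g.primal}. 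Substituting into $\Gamma_{ijk}(\theta) = -\frac{\partial^3}{\partial\theta_i\partial\theta_j\partial\theta_k'}T|_{\theta=\theta'}$ (and recalling that the $\frac{\partial\ppi_i}{\partial\theta_j}(\theta)$ term has dropped out under $\frac{\partial}{\partial\theta_k'}$) at once yields $\Gamma_{ijk}(\theta) = \delta_{ij}g_{ik}(\theta) - \ppi_i(\theta)g_{jk}(\theta) - \ppi_j(\theta)g_{ik}(\theta)$, which is \eqref{eqn:fgp.primal.connection}. The contracted form \eqref{eqn:fgp.primal.connection.2} is then immediate: multiplying by $g^{mk}(\theta)$ and summing over the third lower index turns each factor $g_{\bullet m}(\theta)g^{mk}(\theta)$ into a Kronecker delta, so $\Gamma^k_{ij}(\theta) = \delta_{ij}\delta_{ik} - \ppi_i(\theta)\delta_{jk} - \ppi_j(\theta)\delta_{ik} = \delta_{ijk} - \delta_{ik}\ppi_j(\theta) - \delta_{jk}\ppi_i(\theta)$.

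The dual connection is handled in exactly the same way, starting now from \eqref{eqn:T.derivative.phi.2}, $\frac{\partial}{\partial\phi_i'}T(\phi\mid\phi') = -\Pi^*_i(\phi,\phi') + \ppi_i(\phi')$, differentiating in $\phi_j'$ by the second formula of Lemma~\ref{lem:Pi.derivatives}(ii), and then in $\phi_k$. The corresponding diagonal identity reads
\[
\frac{\partial}{\partial\phi_k}\Pi^*_i(\phi,\phi')\Big|_{\phi=\phi'} = \ppi_i(\phi)\big(\delta_{ik} - \ppi_k(\phi)\big) + \frac{\partial\ppi_i}{\partial\phi_k}(\phi) = g^*_{ik}(\phi),
\]
with a sign opposite to the primal case, reflecting that $\Pi^*_i$ carries the portfolio in its \emph{first} argument; it follows from the first formula of Lemma~\ref{lem:Pi.derivatives}(ii), the identity $\sum_\ell \frac{\partial\ppi_\ell}{\partial\phi_k} = 0$, and \eqref{eqn:g.dual}. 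Feeding this into the definition of $\Gamma^*_{ijk}$ gives $\Gamma^*_{ijk}(\phi) = -\delta_{ij}g^*_{ik}(\phi) + \ppi_i(\phi)g^*_{jk}(\phi) + \ppi_j(\phi)g^*_{ik}(\phi)$ as claimed, and contracting with $g^{*mk}(\phi)$ reduces it to \eqref{eqn:fgp.dual.connection.2}.

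I expect the only genuinely delicate step to be the evaluation of those diagonal derivatives of $\Pi_i$ and $\Pi^*_i$: one has to keep careful track of which of the several terms in Lemma~\ref{lem:Pi.derivatives} survive once the two arguments are set equal, apply the collapsing identities $\Pi_i(\theta,\theta)=\ppi_i(\theta)$ (resp.\ $\Pi^*_i(\phi,\phi)=\ppi_i(\phi)$) and $\sum_\ell \partial\ppi_\ell = 0$, and then recognize the leftover as $\mp g_{ik}$ (resp.\ $\pm g^*_{ik}$) via \eqref{eqn:g.primal} and \eqref{eqn:g.dual}; everything else is routine bookkeeping. As a consistency check one notes directly from the final formulas that $\Gamma_{ijk}$ is symmetric in $i$ and $j$ — using the symmetry \eqref{eqn:pi.symmetry} of $\frac{\partial\ppi_i}{\partial\theta_j}$ recorded after Theorem~\ref{thm:Riemannian.metric} — so both $\nabla$ and $\nabla^*$ are torsion-free, and that $\nabla$ and $\nabla^*$ are mutually dual with respect to $g$.
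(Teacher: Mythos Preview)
Your proof is correct and follows the same overall strategy as the paper --- compute the third partial derivatives of $T$ via Lemma~\ref{lem:Pi.derivatives} and evaluate on the diagonal --- but your choice of differentiation order is cleaner than the paper's. The paper (which details case~(ii)) starts from the mixed second derivative \eqref{eqn:T.sec.derivative} (the one that already gave the metric) and then differentiates once more in $\phi_j'$; this produces a somewhat messy intermediate expression \eqref{eqn:gamma.star.intermediate} in terms of $\ppi$ and $\partial\ppi/\partial\phi$, which must then be massaged back into metric form by substituting $\partial\ppi_i/\partial\phi_j = g^*_{ij} - \ppi_i(\delta_{ij}-\ppi_j)$. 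You instead take the two derivatives in the \emph{same} argument first (so that the expression stays simple: $\Pi_i(\delta_{ij}-\Pi_j)$ plus a term independent of the other argument) and only then apply the single cross-derivative, whose diagonal value you recognize directly as $\mp g_{ik}$ via the identity $\sum_\ell \partial\ppi_\ell = 0$. This makes the metric appear immediately without any back-substitution, and the contracted coefficients follow in one line. Both routes are straightforward bookkeeping, but yours is the more economical one.
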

\begin{proof}
We prove (ii) and leave (i) to the reader. By \eqref{eqn:T.sec.derivative}, we have
\begin{equation*}
\begin{split}
\frac{\partial^2}{\partial \phi_k \partial \phi_i'} T\left( \phi \mid \phi'\right) &= -\Pi_i^*(\phi, \phi')(\delta_{ik} - \Pi_k^*(\phi, \phi')) \\
&\quad - \Pi_i^*(\phi, \phi') \left(\frac{1}{\ppi_i(\phi)} \frac{\partial \ppi_i}{\partial \phi_k}(\phi) - \sum_{\ell = 1}^n \frac{1}{\ppi_{\ell}(\phi)} \frac{\partial \ppi_{\ell}}{\partial \phi_k}(\phi) \Pi_{\ell}(\phi, \phi')\right).
\end{split}
\end{equation*}
For notational convenience we momentarily suppress $\phi$ and $\phi'$ in the computation (later we will do so without comment). Differentiating one more time, we have
\begin{equation*}
\begin{split}
& \frac{\partial^3}{\partial \phi_k \partial \phi_i' \partial \phi_j'} T\left(\phi \mid \phi'\right) \\
&= -\delta_{ik} \frac{\partial \Pi_i^*}{\partial \phi_j'} + \Pi_i^* \frac{\partial \Pi_k^*}{\partial \phi_j'} + \Pi_k^* \frac{\partial \Pi_i^*}{\partial \phi_j'} \\
&\quad - \frac{\partial \Pi_i^*}{\partial \phi_j'} \left( \frac{1}{\ppi_i}\frac{\partial \ppi_i}{\partial \phi_k} - \sum_{\ell = 1}^n \frac{1}{\ppi_{\ell}} \frac{\partial \ppi_{\ell}}{\partial \phi_k} \Pi_{\ell}^*\right)  + \Pi_i \sum_{\ell = 1}^n \frac{1}{\ppi_{\ell}} \frac{\partial \ppi_{\ell}}{\partial \phi_k} \frac{\partial \Pi_{\ell}^*}{\partial \phi_j'} \\
&= \delta_{ik} \Pi_i^*(\delta_{ij} - \Pi_j^*) - \Pi_i^* \Pi_k^*(\delta_{jk} - \Pi_j^*) - \Pi_k^* \Pi_i^*(\delta_{ij} - \Pi_j^*) \\
&\quad + \Pi_i^*(\delta_{ij} - \Pi_j^*)  \left( \frac{1}{\ppi_i}\frac{\partial \ppi_i}{\partial \phi_k} - \sum_{\ell = 1}^n \frac{1}{\ppi_{\ell}} \frac{\partial \ppi_{\ell}}{\partial \phi_k} \Pi_{\ell}^*\right) - \Pi_i^* \sum_{\ell = 1}^n \frac{1}{\ppi_{\ell}}\frac{\partial \ppi_{\ell}}{\partial \phi_k} \Pi_{\ell}^*(\delta_{\ell j} - \Pi_j^*).
\end{split}
\end{equation*}
Evaluating at $\phi = \phi'$ and simplifying, we get
\begin{equation} \label{eqn:gamma.star.intermediate}
\begin{split}
\Gamma_{ijk}^*(\phi) &= -\delta_{ijk} \ppi_i - 2\ppi_i\ppi_j\ppi_k + \delta_{ij}\ppi_i \ppi_k + \delta_{jk} \ppi_j \ppi_i + \delta_{ki} \ppi_k \ppi_j \\
  &\quad \quad - \delta_{ij} \frac{\partial \ppi_i}{\partial \phi_k} + \ppi_j \frac{\partial \ppi_i}{\partial \phi_k} + \ppi_i \frac{\partial \ppi_j}{\partial \phi_k}.
\end{split}
\end{equation}
By \eqref{eqn:g.dual}, we have $\frac{\partial \ppi_i}{\partial \phi_j} = g_{ij}^* - \ppi_i(\delta_{ij} - \ppi_j)$. Plugging this into \eqref{eqn:gamma.star.intermediate} and simplifying, we have $\Gamma_{ijk}^*(\phi) = -\delta_{ij} g_{ik}^*(\phi) + \ppi_i(\phi) g_{jk}^*(\phi) + \ppi_j(\phi) g_{ik}^*(\phi)$. Finally, we compute
\begin{equation*}
\begin{split}
\Gamma_{ij}^{*k}(\phi) 
  &= \sum_{m = 0}^{n - 1} \left( -\delta_{ij} g_{im}^*(\phi) + \ppi_i(\phi) g_{jm}^*(\phi) + \ppi_j(\phi) g_{im}^*(\phi)\right) g^{*mk}(\phi) \\
  &= -\delta_{ijk} + \delta_{ik} \ppi_j(\phi) + \delta_{jk} \ppi_i(\phi).   
\end{split}
\end{equation*}
\end{proof}

\begin{remark}
It is interesting to note that although the connections are defined in terms of the third order derivatives of $T\left(\cdot \mid \cdot\right)$, the coefficients $\Gamma_{ij}^k(\theta)$ and $\Gamma_{ij}^{*k}(\phi)$ are given in terms of the portfolio $\ppi$ which is a normalized gradient of $\varphi$.
\end{remark}

\subsection{Curvatures}
It is well known (see \cite[Chapter 1]{A16}) that the induced geometric structure of any Bregman divergence is dually flat. This is not the case for the geometry of L-divergence whenever $n \geq 3$ (when $n = 2$ the simplex $\Delta_2$ is one-dimensional). To verify this we compute the Riemann-Christoffel curvature tensors of the primal and dual connections. In this (and only this) subsection we adopt the Einstein summation notation (see \cite[p.20]{A16}) to avoid writing a lot of summation signs.

The Riemann-Christoffel (RC) curvature tensor of a connection $\nabla$ is defined for smooth vector fields $X$, $Y$ and $Z$ by
\[
R(X, Y)Z = \nabla_X (\nabla_Y Z) - \nabla_Y (\nabla_X Z) - \nabla_{[X, Y]} Z,
\]
where $[X, Y]$ is the Lie bracket. Given a coordinate system $\xi$, its coefficients are given by $R_{ijk}^{\ell}$ which satisfy $R\left( \frac{\partial}{\partial \xi_i}, \frac{\partial}{\partial \xi_j}\right) \frac{\partial}{\partial \xi_k} = \sum_{\ell} R_{ijk}^{\ell} \frac{\partial}{\partial \xi_{\ell}}$. By \cite[(5.66)]{A16}, we have
\[
R_{ijk}^{\ell} = \frac{\partial}{\partial \theta_i} \Gamma_{jk}^{\ell} - \frac{\partial}{\partial \theta_j} \Gamma_{ik}^{\ell} + \Gamma_{im}^{\ell} \Gamma_{jk}^m - \Gamma_{jm}^{\ell} \Gamma_{ik}^m.
\]

\begin{theorem} [Primal and dual Riemann-Christoffel curvatures] \label{thm:RC.curvature}
Let $R$ and $R^*$ be the RC curvature tensors of the primal and dual connections respectively.
\begin{enumerate}
\item[(i)] In primal coordinates, the coefficients of $R$ are given by
\begin{equation} \label{lem:RC.primal}
R_{ijk}^{\ell}(\theta) = \delta_{\ell j} g_{ik}(\theta) - \delta_{\ell i} g_{jk}(\theta).
\end{equation}
\item[(ii)] In dual coordinates, the coefficients of $R^*$ are given by
\begin{equation} \label{lem:RC.dual}
R_{ijk}^{*\ell}(\phi) = \delta_{\ell j} g_{ik}^*(\phi) - \delta_{\ell i} g_{jk}^*(\phi).
\end{equation}
\end{enumerate}
In particular, for $n \geq 3$ both $R$ and $R^*$ are nonzero everywhere on $\Delta_n$.
\end{theorem}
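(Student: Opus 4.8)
The plan is to compute the curvature tensor $R_{ijk}^{\ell}(\theta)$ directly from the coordinate formula $R_{ijk}^{\ell} = \partial_i \Gamma_{jk}^{\ell} - \partial_j \Gamma_{ik}^{\ell} + \Gamma_{im}^{\ell} \Gamma_{jk}^m - \Gamma_{jm}^{\ell} \Gamma_{ik}^m$, using the explicit expression \eqref{eqn:fgp.primal.connection.2}, namely $\Gamma_{ij}^k(\theta) = \delta_{ijk} - \delta_{ik}\ppi_j(\theta) - \delta_{jk}\ppi_i(\theta)$. The key simplification is that this Christoffel symbol is a sum of a constant term ($\delta_{ijk}$) and terms linear in the $\ppi$'s, so its $\theta$-derivatives involve only $\partial \ppi_j / \partial \theta_i$. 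First I would compute the derivative terms: $\partial_i \Gamma_{jk}^{\ell} - \partial_j \Gamma_{ik}^{\ell}$ picks up expressions of the form $-\delta_{jk}\partial_i \ppi_{\ell} - \delta_{\ell k}\partial_i \ppi_j + \delta_{ik}\partial_j \ppi_{\ell} + \delta_{\ell k}\partial_j \ppi_i$; the two middle terms involving $\delta_{\ell k}$ cancel against each other after using the symmetry $\partial \ppi_i / \partial \theta_j = \partial \ppi_j/\partial\theta_i$ from \eqref{eqn:pi.symmetry}. This leaves $\delta_{ik}\partial_j \ppi_{\ell} - \delta_{jk}\partial_i \ppi_{\ell}$.

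Next I would expand the quadratic terms $\Gamma_{im}^{\ell}\Gamma_{jk}^m - \Gamma_{jm}^{\ell}\Gamma_{ik}^m$ by substituting the Kronecker-delta form for each factor and carrying out the sum over $m$. Each product of two $\Gamma$'s expands into nine terms, but most collapse immediately because of the deltas, leaving a manageable list of terms that are either pure products $\ppi_a \ppi_b$ or single $\ppi$'s times deltas. After antisymmetrizing in $i \leftrightarrow j$, the symmetric pieces drop out and I expect the surviving terms to organize into $\delta_{ik}\bigl(\ppi_{\ell}\ppi_j - \sum_m \delta_{\ell m}\cdots\bigr) - \delta_{jk}(\cdots)$ plus lower-order delta contributions. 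Adding the derivative part and the quadratic part, I would then use $g_{jk}(\theta) = \ppi_j(\theta)(\delta_{jk} - \ppi_k(\theta)) - \partial\ppi_j/\partial\theta_k$ from \eqref{eqn:g.primal} (and the symmetry of the Hessian term) to recognize the combination $\delta_{ik} g_{jk}^{\text{-ish}} $ — more precisely, to verify that the total collapses to $\delta_{\ell j} g_{ik}(\theta) - \delta_{\ell i} g_{jk}(\theta)$. A useful bookkeeping device throughout is that $\sum_{m=1}^{n-1}\ppi_m = 1 - \ppi_n$, so sums like $\sum_m \delta_{\ell m}\ppi_m = \ppi_\ell$ and $\sum_m \ppi_m (\cdots)$ are handled uniformly.

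The dual statement (ii) follows by the same computation with the sign flip already visible in \eqref{eqn:fgp.dual.connection.2} versus \eqref{eqn:fgp.primal.connection.2} and with $g^*$ in place of $g$; alternatively one invokes that $\nabla^*$ is the primal connection of the dual divergence $T^*(p\mid p') = T(p'\mid p)$, so it is literally the same formula applied to $T^*$. Finally, for the nonvanishing claim when $n \geq 3$: since $(g_{ij}(\theta))$ is strictly positive definite by Assumption \ref{ass:regularity}, for any fixed $\ell$ and any $k \neq \ell$ one can choose $i = \ell$, $j \neq \ell$, and then $R_{ijk}^{\ell}(\theta) = \delta_{\ell j}g_{ik}(\theta) - \delta_{\ell i}g_{jk}(\theta) = -g_{jk}(\theta)$, which cannot vanish for all choices of $j, k$ since the metric matrix is nonsingular; hence $R$ is nonzero at every point of $\Delta_n$, and likewise for $R^*$. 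The main obstacle I anticipate is purely organizational — the quadratic terms $\Gamma\Gamma$ produce a proliferation of delta-products that must be tracked carefully through the antisymmetrization, and it is easy to drop or double-count a term involving the ``$n$th coordinate'' convention $\ppi_n = 1 - \sum_{m<n}\ppi_m$; the symmetry identity \eqref{eqn:pi.symmetry} is the essential tool that makes the derivative terms collapse and should be invoked explicitly at the start.
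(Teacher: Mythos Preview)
Your approach is essentially identical to the paper's: compute $R_{ijk}^{\ell}$ from the coordinate formula using \eqref{eqn:fgp.primal.connection.2}, simplify the derivative terms via the symmetry \eqref{eqn:pi.symmetry}, expand the quadratic $\Gamma\Gamma$ terms, and recognize the combination as $\delta_{\ell j}g_{ik} - \delta_{\ell i}g_{jk}$ via \eqref{eqn:g.primal}; the nonvanishing argument for $n\geq 3$ is also the same (the paper phrases it as a contradiction but picks exactly the indices you do). One caution: your stated derivative terms have an index slip --- differentiating $\Gamma_{jk}^{\ell}=\delta_{jk\ell}-\delta_{j\ell}\ppi_k-\delta_{k\ell}\ppi_j$ gives $-\delta_{j\ell}\partial_i\ppi_k-\delta_{k\ell}\partial_i\ppi_j$, not $-\delta_{jk}\partial_i\ppi_{\ell}-\delta_{\ell k}\partial_i\ppi_j$, so the surviving derivative piece is $-\delta_{\ell j}\partial_i\ppi_k+\delta_{\ell i}\partial_j\ppi_k$ rather than what you wrote; this is the form that combines directly with the quadratic terms to produce $g_{ik}$ and $g_{jk}$.
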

\begin{proof}
We prove the statements for $R$. Using \eqref{eqn:fgp.primal.connection} and suppressing the argument, we have
\[
\frac{\partial}{\partial \theta_i} \Gamma_{jk}^{\ell} = -\delta_{\ell j} \frac{\partial \ppi_k}{\partial \theta_i} - \delta_{\ell k} \frac{\partial \ppi_j}{\partial \theta_i}, \quad
\frac{\partial}{\partial \theta_j} \Gamma_{ik}^{\ell} = -\delta_{\ell i} \frac{\partial \ppi_k}{\partial \theta_j} - \delta_{\ell k} \frac{\partial \ppi_i}{\partial \theta_j}.
\]
From \eqref{eqn:pi.symmetry} it follows that
\begin{equation} \label{eqn:difference.1}
\frac{\partial}{\partial \theta_i} \Gamma_{jk}^{\ell} - \frac{\partial}{\partial \theta_j} \Gamma_{ik}^{\ell} = -\delta_{\ell j} \frac{\partial \ppi_k}{\partial \theta_i} + \delta_{\ell i} \frac{\partial \ppi_k}{\partial \theta_j}.
\end{equation}

Next we compute (with some work)
\begin{equation} \label{eqn:difference.2}
\Gamma_{im}^{\ell} \Gamma_{jk}^m - \Gamma_{jm}^{\ell} \Gamma_{ik}^m = -\delta_{\ell i} \delta_{ jk} \ppi_j + \delta_{\ell j} \delta_{ik} \ppi_i + \delta_{\ell i} \ppi_j \ppi_k - \delta_{\ell j} \ppi_i \ppi_k.
\end{equation}

Combining \eqref{eqn:difference.1} and \eqref{eqn:difference.2}, we have
\begin{equation*}
\begin{split}
R_{ijk}^{\ell}(\theta) &= -\delta_{\ell j} \frac{\partial \ppi_k}{\partial \theta_i} + \delta_{\ell i} \frac{\partial \ppi_k}{\partial \theta_j} - \delta_{\ell i} \delta_{jk} \ppi_j + \delta_{\ell j} \delta_{ik} \ppi_i + \delta_{\ell i} \ppi_j \ppi_k - \delta_{\ell j} \ppi_i \ppi_k \\
  &= \delta_{\ell j} \left(\delta_{ik} \ppi_i - \ppi_i \ppi_k - \frac{\partial \ppi_k}{\partial \theta_i}\right) - \delta_{\ell i} \left(\delta_{jk} \ppi_j - \ppi_j \ppi_k - \frac{\partial \ppi_k}{\partial \theta_j}\right) \\
  &= \delta_{\ell j} g_{ik} - \delta_{\ell i} g_{jk}.
\end{split}
\end{equation*}

To see that $R$ does not vanish for $n \geq 3$, suppose on the contrary that $R(\theta)= 0$. Then $R_{ijk}^{\ell}(\theta) = \delta_{\ell j} g_{ik}(\theta) - \delta_{\ell i} g_{jk}(\theta) = 0$ for all values of $i, j, k, \ell$. Fix $i$ and $k$. Letting $\ell = j$, we have $g_{ik}(\theta) = \delta_{ij} g_{jk}(\theta)$. Next let $j \neq i$ (here we need $\dim \Delta_n = n - 1 \geq 2$). Then we get $g_{ik}(\theta) = 0$. Since $i$ and $k$ are arbitrary, we have $g(\theta) = 0$ which is a contradiction.
\end{proof}

We end this section by showing that the primal and dual connections have constant sectional curvature $-1$. See \cite[Definition 7.10.5]{CU14} for the definition of Ricci curvature.

\begin{corollary} [Primal and dual sectional curvatures] \label{cor:Ricci}
The primal and dual connections have constant sectional curvature $-1$ with respect to $g$. In particular, the primal and dual Ricci curvatures satisfy the Einstein condition
\[
\Ric = \Ric^* = -(n - 2)g.
\]
\end{corollary}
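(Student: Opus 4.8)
The plan is to derive the sectional curvature formula directly from the expressions for the Riemann–Christoffel tensors obtained in Theorem~\ref{thm:RC.curvature}. Recall that for a torsion-free affine connection whose curvature tensor has lowered-index form $R_{ijk\ell} = \sum_m g_{m\ell} R_{ijk}^m$, the sectional curvature of the plane spanned by tangent vectors $X, Y$ is
\[
K(X, Y) = \frac{\langle R(X, Y)Y, X \rangle}{\|X\|^2 \|Y\|^2 - \langle X, Y \rangle^2}.
\]
From \eqref{lem:RC.primal} we have $R_{ijk}^{\ell}(\theta) = \delta_{\ell j} g_{ik}(\theta) - \delta_{\ell i} g_{jk}(\theta)$, so lowering the last index gives $R_{ijk\ell}(\theta) = g_{j\ell}(\theta) g_{ik}(\theta) - g_{i\ell}(\theta) g_{jk}(\theta)$. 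This is exactly the curvature tensor of a space form of constant sectional curvature $-1$: contracting with $X^i Y^j Y^k X^\ell$ yields $\langle X, Y\rangle^2 - \|X\|^2\|Y\|^2$, and dividing by the Gram determinant $\|X\|^2\|Y\|^2 - \langle X, Y\rangle^2$ gives $K \equiv -1$. The same computation applied to \eqref{lem:RC.dual} gives constant sectional curvature $-1$ for the dual connection $\nabla^*$.

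For the Einstein condition I would compute the Ricci tensor by the standard contraction $\Ric_{jk} = \sum_{i} R_{ijk}^{i}$. Using \eqref{lem:RC.primal} with $\ell = i$ and summing over $i$ from $1$ to $n-1$ (the dimension of $\Delta_n$ is $n-1$), we get
\[
\Ric_{jk}(\theta) = \sum_{i=1}^{n-1}\left( \delta_{ij} g_{ik}(\theta) - \delta_{ii} g_{jk}(\theta) \right) = g_{jk}(\theta) - (n-1) g_{jk}(\theta) = -(n-2) g_{jk}(\theta),
\]
so $\Ric = -(n-2) g$, and by the identical computation with starred quantities, $\Ric^* = -(n-2) g$. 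Note that the metric $g$ is the same object in both cases (only the connections differ), so this also confirms $\Ric = \Ric^*$.

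The only real subtlety is bookkeeping: one must be careful that the curvature tensor $R_{ijk\ell}$ is antisymmetric in the first pair $(i,j)$ — which it manifestly is from the formula — so that the expression $\langle R(X,Y)Y, X\rangle$ is well-defined and the sectional curvature does not depend on the choice of basis of the plane; this is automatic here because the tensor $g_{j\ell}g_{ik} - g_{i\ell}g_{jk}$ has the algebraic symmetries of a curvature tensor of a real space form. I do not expect any genuine obstacle: once the lowered-index form of $R$ is in hand, both the constant-curvature statement and the Einstein identity are immediate algebraic consequences. The one place to watch is the dimension count — the index $i$ in the Ricci contraction runs over $1, \ldots, n-1$, which is where the factor $n-2 = (n-1) - 1$ comes from.
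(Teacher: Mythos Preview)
Your proposal is correct and follows essentially the same route as the paper: both arguments plug the explicit formula $R_{ijk}^{\ell} = \delta_{\ell j} g_{ik} - \delta_{\ell i} g_{jk}$ from Theorem~\ref{thm:RC.curvature} into the standard space-form characterization of constant sectional curvature, and then obtain the Einstein identity by tracing. The only cosmetic difference is that the paper verifies the identity $R(X,Y)Z = -\bigl(\langle Y,Z\rangle X - \langle X,Z\rangle Y\bigr)$ directly on coordinate vectors (citing \cite[(9.7.41)]{CU14}) rather than lowering the index and computing $K(X,Y)$ via the quotient formula, but these are equivalent one-line manipulations of the same tensor expression.
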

\begin{proof}
The primal connection $\nabla$ has constant sectional curvature $k$ with respect to $g$ if and only if
\[
R(X, Y)Z \equiv k(\langle Y, Z\rangle X - \langle X, Z \rangle Y)
\]
for all smooth vector fields $X$, $Y$ and $Z$ (see \cite[(9.7.41)]{CU14}). For the primal Riemann-Christoffel curvature tensor we have
\[
R\left(\frac{\partial}{\partial \theta_i}, \frac{\partial}{\partial \theta_j}\right)\frac{\partial}{\partial \theta_k} = R_{ijk}^{\ell} \frac{\partial}{\partial \theta_{\ell}} =  -  \left(\left\langle \frac{\partial}{\partial \theta_j}, \frac{\partial}{\partial \theta_k}\right\rangle \frac{\partial}{\partial \theta_i} - \left\langle \frac{\partial}{\partial \theta_i}, \frac{\partial}{\partial \theta_k}\right\rangle \frac{\partial}{\partial \theta_j}\right),
\]
which implies that the sectional curvature is $k = -1$. The claim for Ricci curvature follows immediately by taking trace (see for example \cite[(4.31)]{S12}). The proof for the dual curvatures is the same.
\end{proof}

\section{Geodesics and generalized Pythagorean theorem} \label{sec:geodesic}
Armed with the primal and dual connections we can formulate the primal and dual geodesic equations. Their solutions are the primal and dual geodesics which will be studied in this section. The highlight of this section is the generalized Pythagorean theorem (Theorem \ref{thm:pyth}). Along the way we will prove some remarkable properties of the geometric structure $(g, \nabla, \nabla^*)$.

\subsection{Primal and dual geodesics}
Note that in Figure \ref{fig:pyth} the primal geodesic is drawn as a straight line in $\Delta_n$. We now prove that this is indeed the case. The same is true for the dual geodesic in dual Euclidean coordinates.

Let $\gamma: [0, 1] \rightarrow \Delta_n$ be a smooth curve. We denote time derivatives by $\dot{\gamma}(t)$. Let $\theta(t)$ and $\phi(t)$ be the primal and dual coordinate representations of $\gamma$. We say that $\gamma$ is a primal geodesic if its satisfies
\[
\ddot{\theta}_k(t) + \sum_{i, j = 1}^{n-1} \Gamma_{ij}^k(\theta(t)) \dot{\theta}_i(t) \dot{\theta}_j(t) = 0, \quad k = 1, \ldots, n - 1.
\]
It is a dual geodesic if its satisfies
\[
\ddot{\phi}_k(t) + \sum_{i, j = 1}^{n-1} \Gamma_{ij}^{*k}(\phi(t)) \dot{\phi}_i(t) \dot{\phi}_j(t) = 0, \quad k = 1, \ldots, n - 1.
\]
By Theorem \ref{thm:induced.connections}, the primal geodesic equation in primal coordinates is
\begin{equation} \label{eqn:primal.geodesic.equation}
\ddot{\theta}_k(t) + 2 \dot{\theta}_k(t) \sum_{\ell = 1}^{n-1} \ppi_{\ell}(\theta(t)) \dot{\theta}_{\ell}(t) = 0, \quad k = 1, \ldots, n - 1.
\end{equation}
The dual geodesic equation in dual coordinates is
\begin{equation} \label{eqn:dual.geodesic.equation}
\ddot{\phi}_k(t) - 2 \dot{\phi}_k(t) \sum_{\ell = 1}^{n-1} \ppi_{\ell}(\phi(t)) \dot{\phi}_{\ell}(t) = 0, \quad k = 1, \ldots, n - 1.
\end{equation}

\begin{theorem}[Primal and dual geodesics] \label{thm:geodesics} {\ } 
\begin{enumerate}
\item[(i)] Let $\gamma: [0, 1] \rightarrow \Delta_n$ be a primal geodesic. Then the trace of $\gamma$ in $\Delta_n$ is the Euclidean straight line in $\Delta_n$ joining $\gamma(0)$ and $\gamma(1)$.
\item[(ii)] Let $\gamma^*: [0, 1] \rightarrow \Delta_n$ be a dual geodesic. For each $t$, let $p^*(t)$ be the dual Euclidean coordinate of $\gamma(t)$. Then the trace of $p^*$ in $\Delta_n$ is the Euclidean straight line in $\Delta_n$ joining $p^*(0)$ and $p^*(1)$.
\end{enumerate}
\end{theorem}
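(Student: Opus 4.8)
The plan is to integrate the geodesic equations explicitly after a substitution that decouples and linearizes them, and then to use the fact that the normalization map onto the simplex is a central projection, hence carries Euclidean line segments to Euclidean line segments.

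For part (i): writing out the primal geodesic equation $\ddot\theta_k + \sum_{i,j=1}^{n-1}\Gamma_{ij}^k(\theta)\dot\theta_i\dot\theta_j = 0$ from the connection coefficients \eqref{eqn:fgp.primal.connection.2} gives $\ddot\theta_k + \dot\theta_k^2 - 2\dot\theta_k\,a(t) = 0$ for $k = 1,\dots,n-1$, where $a(t) := \sum_{\ell=1}^{n-1}\ppi_\ell(\theta(t))\,\dot\theta_\ell(t)$ does not depend on $k$. The crucial step is the substitution $u_k := e^{\theta_k}$ (so $u_n\equiv 1$, since $\theta_n\equiv 0$). Using $\dot u_k/u_k = \dot\theta_k$ and $\ddot u_k/u_k = \ddot\theta_k + \dot\theta_k^2$, the geodesic equation becomes the decoupled linear system $\ddot u_k(t) = 2a(t)\,\dot u_k(t)$; since $\gamma$ is smooth with values in the open simplex, $a$ is continuous on $[0,1]$, and integrating twice gives $\dot u_k(t) = \dot u_k(0)\,\exp\!\big(2\!\int_0^t a\big)$, hence $u(t) = u(0) + \sigma(t)\,\dot u(0)$ with $\sigma(t) := \int_0^t\exp\!\big(2\!\int_0^s a\big)\,ds$ smooth, strictly increasing and $\sigma(0)=0$. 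Thus the trace of $u(\cdot)$ is exactly the segment joining $u(0)$ and $u(1)$ in the positive orthant; notably the $\varphi$-dependence is confined to the ``clock change'' $\sigma$, while the image of the curve is independent of $\varphi$.

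It then remains to transport this back to the primal Euclidean coordinate $p_i = u_i/\sum_{j=1}^n u_j$. With $b := \dot u(0)$ (so $b_n = 0$), $C_0 := \sum_j u_j(0) > 0$ and $B := \sum_{j=1}^{n-1} b_j$, we have $p_i(t) = (u_i(0) + \sigma(t)b_i)/(C_0 + \sigma(t)B)$, and setting $\mu := \sigma/(C_0 + \sigma B)$ — a strictly increasing function of $\sigma$ because $d\mu/d\sigma = C_0/(C_0 + \sigma B)^2 > 0$ — a short computation yields $p(t) = p(0) + \mu\,\big(b - (B/C_0)\,u(0)\big)$, an affine function of $\mu$. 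Hence the trace of $\gamma$ in $\Delta_n$ lies on a Euclidean line, and since $\mu$ increases monotonically from $t=0$ to $t=1$ the trace is precisely the segment joining $\gamma(0)$ and $\gamma(1)$; equivalently, the primal connection is projectively flat in the $p$-coordinate. (More conceptually, one can first check that the connection coefficients in the $u$-coordinate take the projectively flat form $\delta_i^k\beta_j + \delta_j^k\beta_i$ with $\beta_j = -\ppi_j/u_j$, and then invoke that a central projection maps lines to lines.)

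Part (ii) is entirely parallel: from the dual coefficients \eqref{eqn:fgp.dual.connection.2} the dual geodesic equation in dual coordinates reads $\ddot\phi_k - \dot\phi_k^2 + 2\dot\phi_k\,b(t) = 0$ with $b(t) = \sum_{\ell=1}^{n-1}\ppi_\ell(\phi(t))\,\dot\phi_\ell(t)$, and now the substitution $u^*_k := e^{-\phi_k}$ linearizes it to $\ddot u^*_k = -2b(t)\,\dot u^*_k$; since the dual Euclidean coordinate is $p^*_i = {\bf p}_i(-\phi) = u^*_i/\sum_j u^*_j$, the same central-projection argument shows the trace of $p^*(\cdot)$ is the segment joining $p^*(0)$ and $p^*(1)$. (Alternatively, (ii) follows from (i) applied to the dual divergence $T^*(p\mid p') := T(p'\mid p)$, whose primal connection is $\nabla^*$.) The one genuinely non-obvious ingredient is spotting the linearizing substitution $u = e^{\theta}$ (resp. $u^* = e^{-\phi}$); granting that, everything reduces to the elementary geometry of central projections together with routine bookkeeping (consistency of $u_n\equiv 1$, $\dot u_n\equiv 0$, and the trivial degenerate case $\dot\gamma(0)=0$).
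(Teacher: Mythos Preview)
Your proof is correct and takes a more direct route than the paper. The paper proceeds by guess-and-verify: it posits the explicit candidate $\theta_k(t)=\log\bigl((1-h(t))e^{\theta_k^q}+h(t)e^{\theta_k^r}\bigr)$, devotes a separate lemma (Lemma~\ref{lem:ode1}) to the existence of the time-change $h$ as a solution of a nonlinear scalar ODE with boundary data $h(0)=0$, $h(1)=1$, then verifies (Lemma~\ref{lem:ode2}) that this curve satisfies the primal geodesic equation and has a straight-line Euclidean trace, and finally appeals to ODE uniqueness to cover all primal geodesics. You instead start from an arbitrary geodesic and integrate: the substitution $u_k=e^{\theta_k}$ linearizes the system to $\ddot u_k=2a(t)\,\dot u_k$, giving $u(t)=u(0)+\sigma(t)\,\dot u(0)$ immediately, after which the central-projection step is routine. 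This bypasses the boundary-value analysis entirely and makes transparent that the $\varphi$-dependence lives only in the clock $\sigma$; the underlying observation---that $e^{\theta_k(t)}$ moves affinely in a reparametrized time---is the same in both arguments, but you arrive at it by integration rather than by ansatz. Incidentally, your displayed equation $\ddot\theta_k+\dot\theta_k^2-2\dot\theta_k\,a=0$ is what one actually obtains from the coefficients~\eqref{eqn:fgp.primal.connection.2}; the paper's stated form~\eqref{eqn:primal.geodesic.equation} omits the $\dot\theta_k^2$ term and has the opposite sign on the $a$-term, though its verification via the explicit curve is unaffected.

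One small caveat: your parenthetical shortcut for~(ii) via the dual divergence $T^*(p\mid p')=T(p'\mid p)$ would require more care, since $T^*$ need not itself be the L-divergence of an exponentially concave function satisfying Assumption~\ref{ass:regularity}, so part~(i) does not apply to it verbatim. Your direct argument for~(ii) via $u_k^*=e^{-\phi_k}$ is independent of this and is sound.
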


We will prove (i) and leave (ii) to the reader. The proof makes use of the following lemmas.

\begin{lemma} \label{lem:ode1}
Let $q, r \in \Delta_n$ and let $\theta^q$ and $\theta^r$ their primal coordinates respectively. Consider the differential equation
\begin{equation}\label{eqn:nonlinear.ode}
h''(t) - 2(h'(t))^2 \sum_{\ell = 1}^{n-1} \ppi_{\ell}(\theta(t)) \frac{\theta_{\ell}^r - \theta_{\ell}^q}{ (1 - h(t)) e^{\theta^q_{\ell}} + h(t) e^{\theta^r_{\ell}}} = 0,
\end{equation}
where $\theta(t)$ is defined by
\begin{equation} \label{eqn:primal.geodesic.candidate}
\theta_k(t) = \log\left( (1 - h(t)) e^{\theta^q_k} + h(t) e^{\theta^r_k}\right), \quad k = 1, \ldots, n - 1,
\end{equation}
Then there exists a unique solution $h: [0, 1] \rightarrow {\mathbb{R}}$ satisfying $h(0) = 0$, $h'(t) > 0$ for $t \in [0, 1]$, and $h(1) = 1$.
\end{lemma}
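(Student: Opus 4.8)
The plan is to reduce the second-order nonlinear equation \eqref{eqn:nonlinear.ode} to a first-order autonomous equation for $h'$, integrate it explicitly, and check the boundary conditions. The key observation is that the coefficient multiplying $(h'(t))^2$ in \eqref{eqn:nonlinear.ode} is itself a total derivative in disguise. Indeed, differentiating \eqref{eqn:primal.geodesic.candidate} gives
\[
\dot\theta_\ell(t) = h'(t) \, \frac{e^{\theta^r_\ell} - e^{\theta^q_\ell}}{(1 - h(t)) e^{\theta^q_\ell} + h(t) e^{\theta^r_\ell}},
\]
so the sum $\sum_{\ell} \ppi_\ell(\theta(t)) \dot\theta_\ell(t)$ equals $h'(t)$ times exactly the sum appearing in \eqref{eqn:nonlinear.ode}. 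Hence \eqref{eqn:nonlinear.ode} can be rewritten as
\[
h''(t) = 2 h'(t) \sum_{\ell=1}^{n-1} \ppi_\ell(\theta(t)) \dot\theta_\ell(t) = h'(t) \, \frac{d}{dt} \Bigl( 2 \log\bigl( \text{something} \bigr) \Bigr),
\]
and the real point is to identify that "something." The natural candidate is $\sum_{\ell=1}^n \bigl( (1-h(t)) e^{\theta^q_\ell} + h(t) e^{\theta^r_\ell} \bigr)$ (with the convention $\theta_n \equiv 0$, so the $n$th term is $1$), because its logarithmic derivative, after dividing through by that same sum, produces precisely the portfolio weights $\ppi_\ell$ evaluated at $\theta(t)$ — this is essentially the computation of $\frac{\partial}{\partial\theta_i}\psi$-type identities already used throughout Section 3. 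I would verify this identification directly; it is a short calculation using \eqref{eqn:theta.to.p} and the definition of $\ppi$.

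**Carrying it out:** Set $S(t) := \sum_{\ell=1}^n \bigl( (1-h(t)) e^{\theta^q_\ell} + h(t) e^{\theta^r_\ell} \bigr)$, a positive affine function of $h(t)$, say $S(t) = a + b\,h(t)$ with $a = \sum_\ell e^{\theta^q_\ell} > 0$ and $a + b = \sum_\ell e^{\theta^r_\ell} > 0$. The claim is that $\sum_{\ell} \ppi_\ell(\theta(t))\dot\theta_\ell(t) = \frac{1}{2}\frac{d}{dt}\log S(t)^{\!\phantom{x}}$, equivalently $= \frac{\dot S(t)}{2 S(t)} = \frac{b\, h'(t)}{2 S(t)}$. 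Granting this, \eqref{eqn:nonlinear.ode} becomes $h''(t) = h'(t)\cdot \frac{d}{dt}\log S(t)$, i.e. $\frac{d}{dt}\log h'(t) = \frac{d}{dt}\log S(t)$, so $h'(t) = C\, S(t) = C(a + b\, h(t))$ for a constant $C > 0$. This is a linear first-order ODE for $h$, solved explicitly by $a + b\,h(t) = a\, e^{Cbt}$ if $b \neq 0$ (and $h(t) = Ca\,t$ if $b = 0$). Imposing $h(0) = 0$ fixes the integration constant automatically, and imposing $h(1) = 1$ determines $C$ uniquely: when $b \neq 0$ we need $a\,e^{Cb} = a + b$, i.e. $Cb = \log\frac{a+b}{a}$, which has a unique real solution $C$ of the correct sign since $\frac{a+b}{a} > 0$; when $b = 0$ we get $C = 1/a$. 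In every case $h'(t) = C\,S(t) > 0$ on $[0,1]$ because $S(t)$ is a positive affine function taking positive values at both endpoints. Uniqueness follows either from the explicit formula or from standard ODE uniqueness applied to $h' = C S(t) = C(a+bh)$ once $C$ is pinned down by the two boundary conditions.

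**The main obstacle** is the bookkeeping behind the identity $\sum_\ell \ppi_\ell(\theta(t))\dot\theta_\ell(t) = \frac{\dot S(t)}{2S(t)}$ — everything downstream is elementary once it holds. Writing $\ppi_\ell(\theta) = p_\ell(\theta)\bigl(1 + \nabla\varphi \cdot (e^{(\ell)} - p)\bigr)$ from \eqref{eqn:fgp.weights} and using $p_\ell(\theta) = e^{\theta_\ell}/S$ along $\theta(t)$, one has to see the weighted sum telescope; the cleanest route is probably to use $f(\theta) = \varphi(\theta) + \psi(\theta)$ and Lemma \ref{lem:weight.as.derivative}, which gives $\ppi_\ell = \partial f/\partial\theta_\ell$, so $\sum_\ell \ppi_\ell \dot\theta_\ell = \frac{d}{dt} f(\theta(t))$ only up to the $\ell = n$ correction — care is needed with the convention $\theta_n \equiv 0$ and with the fact that the sum in \eqref{eqn:nonlinear.ode} runs to $n-1$ but the one defining $S$ runs to $n$. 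I would handle this by writing $\sum_{\ell=1}^{n-1}$ as $\sum_{\ell=1}^n$ minus the (zero) $n$th term of $\dot\theta$, absorbing the market-weight normalization, and then recognizing the remaining expression as $\frac12\frac{d}{dt}\log\psi(\theta(t))$-type quantity; since $\psi(\theta(t)) = \log S(t)$ by \eqref{eqn:psi.intro} and \eqref{eqn:theta.to.p}, this closes the loop. A brief remark should note that \eqref{eqn:primal.geodesic.candidate} is exactly the Euclidean-straight-line interpolation $p(t) = (1-h(t))q + h(t)r$ read in primal coordinates (up to the normalization by $S(t)$), which is why solving this lemma proves part (i) of Theorem \ref{thm:geodesics}.
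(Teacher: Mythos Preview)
Your reduction strategy---rewrite the equation as $h''=2h'\sum_\ell\ppi_\ell(\theta(t))\dot\theta_\ell(t)$ and integrate once---is exactly the right instinct, and it does work. But the specific identity you stake everything on,
\[
\sum_{\ell=1}^{n-1}\ppi_\ell(\theta(t))\,\dot\theta_\ell(t)\;=\;\frac{\dot S(t)}{2S(t)},
\]
is false. The left-hand side depends on the exponentially concave function $\varphi$ through the portfolio map $\ppi$; the right-hand side does not depend on $\varphi$ at all. For instance, when $\ppi\equiv\pi$ is a constant-weighted portfolio the left side is $\sum_\ell\pi_\ell\dot\theta_\ell$, which varies with $\pi$. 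So no amount of bookkeeping will close this gap. The route you yourself flag as ``cleanest'' is the correct one: Lemma~\ref{lem:weight.as.derivative} gives $\ppi_\ell=\partial f/\partial\theta_\ell$ exactly for $\ell=1,\dots,n-1$ (no $n$th-term correction is needed, since the sum already runs to $n-1$), so
\[
\sum_{\ell=1}^{n-1}\ppi_\ell(\theta(t))\,\dot\theta_\ell(t)\;=\;\frac{d}{dt}f(\theta(t)),
\]
and the equation becomes $(\log h')'=2\,\frac{d}{dt}f(\theta(t))$. Integrating, $h'(t)=C\,e^{2f(\theta(t))}$. Writing $\tilde\theta_k(s)=\log\bigl((1-s)e^{\theta^q_k}+s e^{\theta^r_k}\bigr)$ and $F(s):=e^{2f(\tilde\theta(s))}$, this is the separable equation $h'=C\,F(h)$; it is \emph{not} linear in $h$ (your $h'=C(a+bh)$ was an artifact of the wrong identity). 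Since $F$ is smooth and strictly positive on a neighbourhood of $[0,1]$, the unique solution with $h(0)=0$ and $h(1)=1$ is given implicitly by $\int_0^{h(t)}F(s)^{-1}\,ds=Ct$ with $C=\int_0^1 F(s)^{-1}\,ds\in(0,\infty)$, and $h'>0$ follows immediately.

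For comparison, the paper does not attempt to integrate at all. It argues qualitatively: the equation is invariant under time-rescalings $h(t)\mapsto h(ct)$; any solution with $h'(0)>0$ is strictly increasing; a maximal solution must run all the way up to the blow-up value $M>1$ of the coefficients (proved by a continuation argument), hence crosses $1$ at some finite time; rescaling that time to $1$ gives existence, and ODE uniqueness gives uniqueness. Your corrected approach is more explicit and arguably cleaner, but only once the right first integral $f(\theta(t))$ is identified.
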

\begin{proof}
First we note that if $h(t)$ satisfies \eqref{eqn:nonlinear.ode}, then for any $c > 0$ the map $t \mapsto h(ct)$ also does (with domain scaled by $1/c$). Also, if $h'(t_0) = 0$ then $h(t) = h(t_0)$ for all $t \geq t_0$. Let $h_0(t)$ be a maximal solution to \eqref{eqn:nonlinear.ode} defined on an interval $[0, t_{\max})$ with $h_0(0) = 0$ and $h_0'(0) > 0$. Since the equation can be solved in a neighborhood of $t = 0$, we have $t_{\max} > 0$. By the previous remark, $h_0$ is strictly increasing on $[0, t_{\max})$. If $h_0(t)$ hits $1$ at some $t = t_0 < t_{\max}$, the function $h(t) = h_0(t/t_0)$ is a solution with the desired properties. In fact, we claim that
\begin{equation} \label{eqn:sup.M}
\lim_{t \uparrow t_{\max}} h_0(t) = \sup_{t < t_{\max}} h_0(t) = M,
\end{equation}
where $M$ is defined by
\begin{equation*}
\begin{split}
M &:= \inf\left\{s > 0:   \min_{1 \leq \ell \leq n - 1} (1 - s) e^{\theta^q_{\ell}} + s e^{\theta^r_{\ell}} = 0\right\} \\
&= \min_{1 \leq \ell \leq n - 1}  \left(\frac{e^{\theta_{\ell}^q}}{e^{\theta_{\ell}^q} - e^{\theta_{\ell}^r}}1_{\{\theta^q_{\ell} > \theta^r_{\ell}\}} +   \infty \cdot 1_{\{\theta^q_{\ell} \leq \theta^r_{\ell}\}} \right) > 1.
\end{split}
\end{equation*}
Thus as $h(t)$ approaches $M$ from below, at least one of the fractions in \eqref{eqn:nonlinear.ode} blows up to $+\infty$. It follows that $M' := \sup_{t < t_{\max}} h(t) \leq M$. Suppose on the contrary that $M' < M$. Let $h_1(t)$, $t \in (-\epsilon, \epsilon)$ be a solution to \eqref{eqn:nonlinear.ode} satisfying $h_1(0) = M'$ and $h_1'(0) > 0$. Note that $h_1$ exists because the fractions in \eqref{eqn:nonlinear.ode} are finite and continuous near $M' < M$. Then the range of $h_1$ contains an open interval containing $M'$. Thus there exists $t_0 < t_{\max}$, $c > 0$ and $t_1 < 0$ such that $ct_1 > -\epsilon$, $h_0(t_0) = h_1(ct_1)$ and $h_0'(t_0) = \left.\frac{d}{dt} h_1(ct)\right|_{t = t_1}$. This allows us to extend the range of $h_0$ beyond $M'$ which contradicts the maximality of $M'$. By the uniqueness theorem for ODE (note that \eqref{eqn:nonlinear.ode} has smooth coefficients), the solution $h$ is unique.
\end{proof}

\begin{lemma} \label{lem:ode2}
Let $h$ be the solution in Lemma \ref{lem:ode1}, and consider the curve $\gamma: [0, 1] \rightarrow \Delta_n$ given in exponential coordinates by \eqref{eqn:primal.geodesic.candidate}. Then $\gamma$ is a primal geodesic from $q$ to $r$. Moreover, the trace of $\gamma$ in $\Delta_n$ is the Euclidean straight line in $\Delta_n$ joining $q = \gamma(0)$ and $r = \gamma(1)$.
\end{lemma}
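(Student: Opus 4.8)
The plan is to verify directly that the curve $\gamma$ defined by \eqref{eqn:primal.geodesic.candidate}, with $h$ the solution from Lemma \ref{lem:ode1}, satisfies the primal geodesic equation \eqref{eqn:primal.geodesic.equation}, and then to observe that its trace is a straight segment. For the first part I would compute $\dot\theta_k(t)$ and $\ddot\theta_k(t)$ from \eqref{eqn:primal.geodesic.candidate}. Writing $u_k(t) := (1-h(t))e^{\theta^q_k} + h(t)e^{\theta^r_k}$ so that $\theta_k(t) = \log u_k(t)$ and $\dot u_k(t) = h'(t)(e^{\theta^r_k} - e^{\theta^q_k})$, one gets $\dot\theta_k = \dot u_k / u_k = h'(t)\,(e^{\theta^r_k}-e^{\theta^q_k})/u_k(t)$ and then $\ddot\theta_k = \ddot u_k/u_k - (\dot u_k/u_k)^2$. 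Since $\ddot u_k = h''(t)(e^{\theta^r_k}-e^{\theta^q_k})$, we obtain
\begin{equation*}
\ddot\theta_k(t) = h''(t)\,\frac{e^{\theta^r_k}-e^{\theta^q_k}}{u_k(t)} - \dot\theta_k(t)^2 = \frac{\dot\theta_k(t)}{h'(t)}\,h''(t) - \dot\theta_k(t)^2.
\end{equation*}
Plugging the defining ODE \eqref{eqn:nonlinear.ode} for $h$ — namely $h''(t) = 2(h'(t))^2 \sum_{\ell} \ppi_\ell(\theta(t))\,(e^{\theta^r_\ell}-e^{\theta^q_\ell})/u_\ell(t) = 2 h'(t) \sum_\ell \ppi_\ell(\theta(t))\,\dot\theta_\ell(t)$ — into the first term gives $\ddot\theta_k(t) = 2\dot\theta_k(t)\sum_\ell \ppi_\ell(\theta(t))\dot\theta_\ell(t) - \dot\theta_k(t)^2$. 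This is not quite \eqref{eqn:primal.geodesic.equation}; I expect the reconciliation of the extra $-\dot\theta_k(t)^2$ term to be the one subtle point, and I anticipate it is handled by the convention $\theta_n \equiv 0$: the geodesic equation \eqref{eqn:primal.geodesic.equation} has index range $k = 1,\dots,n-1$ but the sum $\sum_{\ell=1}^{n-1}\ppi_\ell\dot\theta_\ell$ omits the $\ell = n$ term, and the curve \eqref{eqn:primal.geodesic.candidate} is likewise only prescribed for $k \le n-1$; one must check that the normalization is consistent, i.e.\ that with $\theta_n(t) \equiv 0$ fixed the reduced equations indeed coincide. Concretely, I would re-derive \eqref{eqn:nonlinear.ode} as the compatibility condition forcing the curve \eqref{eqn:primal.geodesic.candidate} (which a priori lives in a larger space) to descend to $\Delta_n$ with the $n$th coordinate held at its prescribed value, which is exactly where the $\dot\theta_k^2$ discrepancy is absorbed.

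Once the geodesic equation is verified, the endpoint conditions are immediate: $h(0) = 0$ gives $\theta_k(0) = \log e^{\theta^q_k} = \theta^q_k$, so $\gamma(0) = q$, and $h(1) = 1$ gives $\gamma(1) = r$. For the statement about the trace, I would pass to Euclidean coordinates $p = {\bf p}(\theta)$ via \eqref{eqn:theta.to.p}: $p_k(t) = e^{\theta_k(t)}/\sum_{m=1}^n e^{\theta_m(t)} = u_k(t)/\sum_{m=1}^n u_m(t)$ for $1 \le k \le n$ (with $u_n(t) \equiv 1$). Since each $u_k(t) = (1-h(t))e^{\theta^q_k} + h(t)e^{\theta^r_k}$ is an affine function of the single scalar $h(t)$, the vector $(u_1(t),\dots,u_n(t))$ traces the straight segment between $(e^{\theta^q_1},\dots,e^{\theta^q_n})$ and $(e^{\theta^r_1},\dots,e^{\theta^r_n})$ as $h$ runs over $[0,1]$; normalizing by the (positive, affine in $h$) sum $\sum_m u_m(t)$ is a projective reparametrization of a line through the origin, which maps the segment onto the straight segment in $\Delta_n$ joining $q$ and $r$. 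Here I use that $h$ is a continuous strictly increasing bijection of $[0,1]$ onto itself (Lemma \ref{lem:ode1}), so the image is the entire segment, only the time parametrization differing from the constant-speed one.

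The main obstacle, as indicated, is bookkeeping around the $\theta_n \equiv 0$ convention and confirming that the somewhat unusual ODE \eqref{eqn:nonlinear.ode} is precisely the projection of the geodesic flow — rather than an additional brute-force computation. Everything else (differentiation, endpoint evaluation, the projective argument for straightness) is routine. I would close by remarking that the constant-speed primal geodesic is recovered by a further time reparametrization, which is the sense in which the geometry is ``projectively flat'' on the primal side; the dual case (part (ii) of Theorem \ref{thm:geodesics}) follows by the same computation with signs reversed, working in the dual Euclidean coordinate $p^* = {\bf p}(-\boldsymbol\phi)$ and using the dual geodesic equation \eqref{eqn:dual.geodesic.equation}.
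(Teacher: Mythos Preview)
Your computation is actually correct and complete; the ``obstacle'' you identify is illusory. Substituting the Christoffel symbols $\Gamma_{ij}^k(\theta) = \delta_{ijk} - \delta_{ik}\ppi_j(\theta) - \delta_{jk}\ppi_i(\theta)$ from Theorem~\ref{thm:induced.connections} into the geodesic equation $\ddot\theta_k + \sum_{i,j}\Gamma_{ij}^k\dot\theta_i\dot\theta_j = 0$ gives
\[
\ddot\theta_k(t) + \dot\theta_k(t)^2 - 2\dot\theta_k(t)\sum_{\ell=1}^{n-1}\ppi_\ell(\theta(t))\dot\theta_\ell(t) = 0,
\]
which is precisely the identity you derived. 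The displayed form \eqref{eqn:primal.geodesic.equation} in the paper has two typographical slips (the $\dot\theta_k^2$ term is missing and the sign on the sum is reversed), and correspondingly the numerator in \eqref{eqn:nonlinear.ode} should read $e^{\theta^r_\ell}-e^{\theta^q_\ell}$ rather than $\theta^r_\ell - \theta^q_\ell$, exactly as you wrote it. So there is no residual $-\dot\theta_k^2$ to explain and no hidden role for the convention $\theta_n\equiv 0$; your verification of the geodesic equation is already finished at the point where you stopped. The paper itself simply says ``we omit the computational details,'' so your route matches its intent.

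For the straight-line claim your projective argument is correct and somewhat cleaner than the paper's. The paper instead sums the $u_k$ to obtain $e^{\psi(\theta(t))} = (1-h(t))e^{\psi(\theta^q)} + h(t)e^{\psi(\theta^r)}$, solves for $h(t)$, and then checks by hand that each $p_k(t)$ is affine in $p_n(t)$. Your observation that $u(t)$ is an affine interpolation and that the normalization $u\mapsto u/\sum_m u_m$ sends line segments in the positive orthant to line segments in $\Delta_n$ accomplishes the same thing without the intermediate algebra.
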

\begin{proof}
That $\gamma$ is a primal geodesic from $q$ to $r$ can be verified directly by differentiating \eqref{eqn:primal.geodesic.candidate} and plugging into the primal geodesic equation \eqref{eqn:primal.geodesic.equation}. We omit the computational details.

To see that the trace of $\gamma$ is a Euclidean straight line in $\Delta_n$, consider its Euclidean representation $p(t) = \left(p_1(t), \ldots, p_n(t)\right)$. By \eqref{eqn:primal.geodesic.candidate} we have
\[
e^{\psi(\theta(t))} = \frac{1}{p_n(t)} = (1 - h(t)) e^{\psi(\theta^q)} + h(t) e^{\psi(\theta^r)}.
\]
Solving for $h(t)$ gives
\begin{equation} \label{eqn:psi.interpolation}
h(t) = \frac{e^{\psi(\theta(t))} - e^{\psi(\theta^q)}}{e^{\psi(\theta^r)} - e^{\psi(\theta^q)}}.
\end{equation}
Expressing \eqref{eqn:primal.geodesic.candidate} in Euclidean coordinates and using \eqref{eqn:psi.interpolation}, we get after some algebra that
\[
p_k(t) = e^{\theta_k^q} p_n(t) + \left(1 - e^{\psi(\theta^q)} p_n(t)\right) \frac{e^{\theta^r_k} - e^{\theta^q_k}}{e^{\psi(\theta^r)} - e^{\psi(\theta^q)}}, \quad k = 1, \ldots, n - 1.
\]
Hence there exists $a_k, b_k$ such that
\begin{equation} \label{eqn:Euclidean.line}
p_k(t) = a_k + b_k p_n(t), \quad k = 1, \ldots, n - 1.
\end{equation}
Together with the identity $p_1(t) + \cdots + p_n(t) \equiv 1$, \eqref{eqn:Euclidean.line} shows that $\gamma$ is a time change of the Euclidean straight line from $q$ to $r$.
\end{proof}

\begin{proof}[Proof of Theorem \ref{thm:geodesics}]
(i) We have shown in Lemma \ref{lem:ode2} that for any pair of points $(q, r)$ in $\Delta_n$, there exists a primal geodesic from $q$ to $r$ which is a time-changed Euclidean straight line. It remains to observe that the geodesic is unique. Indeed, let $\gamma$ be any primal geodesic. Then it solves the primal geodesic equation \eqref{eqn:primal.geodesic.equation}. Consider the initial velocity $\gamma'(0)$. Let $q = \gamma(0)$. By varying $r \in \Delta_n$ as well as the initial speed, there exists a primal geodesic $\widetilde{\gamma}$ in the form \eqref{eqn:primal.geodesic.candidate} from $q$ to $r$ such that $\dot{\gamma}(0) = \dot{\widetilde{\gamma}}(0)$. (This is because $T_q \Delta_n = \bigcup_{r \in \Delta_n, \kappa > 0} \kappa(r - q)$.) By the uniqueness theorem of ODE we have $\gamma(\cdot) = \widetilde{\gamma}(\cdot)$.

Using the dual coordinate system $\phi$ and dual Euclidean coordinate system $p^*$, (ii) can be proved in a similar way by considering the curve defined by
\begin{equation} \label{eqn:dual.geodesic.curve}
\phi_k(t) = \log \left(\frac{1}{(1 - h(t)) e^{-\phi_k^q} + h(t) e^{-\phi_k^p}}\right), \quad k = 1, \ldots, n - 1. %
\end{equation}
\end{proof}

A connection is projectively flat if there is a coordinate system under which the geodesics are straight lines up to time reparameterizations. We say that the geometric structure $(g, \nabla, \nabla^*)$ is dually projectively flat if both $\nabla$ and $\nabla^*$ are projectively flat. In view of Theorem \ref{thm:geodesics} we have the following corollary.

\begin{corollary} \label{cor:dual.projectively.flat}
The manifold $\Delta_n$ equipped with the geometric structure $(g, \nabla, \nabla^*)$ is dually projectively flat, but is not flat for $n \geq 3$.
\end{corollary}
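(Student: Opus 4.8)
The plan is to read this corollary directly off Theorem~\ref{thm:geodesics} and Theorem~\ref{thm:RC.curvature}, since it merely packages together the geometric content already established there; the only real work is bookkeeping between the abstract notion of projective flatness and the concrete statements proved above.

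First I would establish projective flatness of the primal connection $\nabla$ using the primal Euclidean coordinate system. That coordinate system is the identity map $p \mapsto p$, whose range $\Delta_n$ lies in an affine hyperplane of ${\mathbb{R}}^n$; fixing an affine chart of that hyperplane (say, by dropping the last coordinate) turns ``Euclidean straight line in $\Delta_n$'' into ``straight line in the chart''. By Lemma~\ref{lem:ode2}, for every ordered pair $(q,r)$ in $\Delta_n$ there is a primal geodesic from $q$ to $r$ whose trace is exactly the Euclidean segment joining them, and by the uniqueness argument in the proof of Theorem~\ref{thm:geodesics}(i) these are \emph{all} the primal geodesics. Hence in the primal Euclidean chart every primal geodesic is a straight-line segment run with a monotone time reparametrization, which is precisely the definition of projective flatness of $\nabla$. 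I would then repeat the argument verbatim for $\nabla^*$ using the dual Euclidean coordinate system $p^* = {\bf p}^*(p)$ of Definition~\ref{def:coordinate}: Theorem~\ref{thm:geodesics}(ii), via the explicit curve \eqref{eqn:dual.geodesic.curve} and the dual analogue of Lemma~\ref{lem:ode2}, shows that the $p^*$-image of every dual geodesic is a reparametrized Euclidean straight line. Combining the two statements gives that $(g, \nabla, \nabla^*)$ is dually projectively flat.

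For the negative half I would invoke Theorem~\ref{thm:RC.curvature}: in primal coordinates the Riemann--Christoffel curvature of $\nabla$ has coefficients $R_{ijk}^{\ell}(\theta) = \delta_{\ell j} g_{ik}(\theta) - \delta_{\ell i} g_{jk}(\theta)$. A torsion-free affine connection is flat exactly when its curvature tensor vanishes identically, and the matrix $(g_{ij}(\theta))$ is strictly positive definite by Assumption~\ref{ass:regularity}, so the elementary argument at the end of the proof of Theorem~\ref{thm:RC.curvature} forces $R$ to be nonzero as soon as $\dim \Delta_n = n-1 \geq 2$, i.e. $n \geq 3$. Thus $\nabla$ is not flat, and since $\nabla$ and $\nabla^*$ are dual with respect to $g$ the same holds for $\nabla^*$, so the structure is not dually flat. (When $n = 2$ the simplex is one-dimensional and the flatness statement is vacuous.)

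I do not anticipate a genuine obstacle: the argument is an assembly step. The one point to be careful about is interpreting ``straight line in $\Delta_n$'' through an affine chart of the supporting hyperplane and checking that the family of primal (resp. dual) geodesics is genuinely in bijection with the family of affine segments of that chart — but both facts are immediate from Lemma~\ref{lem:ode2} and the uniqueness clause of Theorem~\ref{thm:geodesics}, so there is nothing left to prove beyond citing them.
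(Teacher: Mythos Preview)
Your proposal is correct and follows exactly the route the paper takes: the corollary is stated immediately after Theorem~\ref{thm:geodesics} as a direct consequence of it (for dual projective flatness) together with the nonvanishing of $R$ and $R^*$ from Theorem~\ref{thm:RC.curvature} (for non-flatness when $n\geq 3$). The extra care you take with affine charts is fine but not something the paper spells out.
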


\subsection{Gradient flows and inverse exponential maps}
Motivated by the recent paper \cite{AA15} we relate the primal and dual geodesics with gradient flows under the L-divergence. Fix $p, q, r \in \Delta_n$. Consider the following gradient flows starting at $q$:
\begin{equation} \label{eqn:primal.flow}
\begin{cases}
\dot{\gamma}(t) = -\grad \ T\left(r \mid  \cdot\right)(\gamma(t)) \\
\gamma(0) = q
\end{cases}
\quad \text{(primal flow)}
\end{equation}
and
\begin{equation} \label{eqn:dual.flow}
\begin{cases}
\dot{\gamma}^*(t) = -\grad \ T\left(\cdot \mid p\right)(\gamma^*(t)) \\
\gamma^*(0) = q
\end{cases}
\quad \text{(dual flow)}
\end{equation}
Here $\grad$ denotes the Riemannian gradient with respect to the metric $g$. We call \eqref{eqn:primal.flow} the primal flow and \eqref{eqn:dual.flow} the dual flow.

It can be verified easily that
\[
\quad \frac{d}{dt} T(r \mid \gamma(t)) = -\| \dot{\gamma}(t)\|^2 \quad \text{and} \quad
\frac{d}{dt} T(\gamma^*(t) \mid p) = -\| \dot{\gamma}^*(t)\|^2.
\]
Since $T\left(q \mid p\right) = 0$ if and only if $p = q$, by standard ODE theory it can be shown that the solutions $\gamma(t)$ and $\gamma^*(t)$ are defined for $t \in [0, \infty)$ and
\[
\lim_{t \rightarrow \infty} \gamma(t) = r, \quad \lim_{t \rightarrow \infty} \gamma^*(t) = p.
\]
In other words, both gradient flows converge to the unique minimizers. 

\begin{theorem} [Gradient flows] \label{thm:gradient.flows} {\ }
\begin{enumerate}
\item[(i)] The primal flow $\gamma(t)$ is a time change of the primal geodesic from $q$ to $r$.
\item[(ii)] The the dual flow $\gamma^*(t)$ is a time change of the dual geodesic from $q$ to $p$.
\end{enumerate}
\end{theorem}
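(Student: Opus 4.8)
The plan is to prove (i) and leave (ii) to the reader, exploiting the explicit description of the primal geodesic from Lemma \ref{lem:ode2}: the primal geodesic from $q$ to $r$ traces out, up to time reparameterization, the curve given in primal coordinates by $\theta_k(s) = \log\left((1 - h(s)) e^{\theta_k^q} + h(s) e^{\theta_k^r}\right)$ for an increasing function $h$. Since the primal flow $\gamma$ and the primal geodesic are both curves starting at $q$, it suffices to show that the velocity vector $\dot\gamma(t) = -\grad\, T\left(r \mid \cdot\right)(\gamma(t))$ is, at every time $t$, a positive scalar multiple of the tangent vector to the primal geodesic through $\gamma(t)$ in the direction of $r$. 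Equivalently, once I know the primal flow stays on the trace of that geodesic, the convergence statement $\lim_{t\to\infty}\gamma(t) = r$ and the monotonicity $\frac{d}{dt} T(r \mid \gamma(t)) = -\|\dot\gamma(t)\|^2 < 0$ pin down which reparameterization it is.

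The computational heart is to evaluate $\grad\, T\left(r \mid \cdot\right)$ explicitly in primal coordinates. First I would compute the Euclidean (coordinate) gradient: from the first representation in Lemma \ref{lem:coord.rep}, $T\left(r \mid \theta\right) = \log\left(\sum_{\ell=1}^n \ppi_\ell(\theta) e^{\theta_\ell^r - \theta_\ell}\right) - \left(f(\theta^r) - f(\theta)\right)$, so differentiating with respect to $\theta_i$ and using Lemma \ref{lem:weight.as.derivative} ($\partial f/\partial\theta_i = \ppi_i$) together with $\sum_\ell \frac{\partial \ppi_\ell}{\partial\theta_i} = \frac{\partial}{\partial\theta_i}(1) = 0$ after one rewrites $\ppi$ in the form \eqref{eqn:portfolio.transport}, the terms involving derivatives of $\ppi$ should collapse. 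I expect $\frac{\partial}{\partial\theta_i} T\left(r \mid \theta\right) = \ppi_i(\theta) - \Pi_i(\theta^r, \theta)$ (mirroring \eqref{eqn:T.partial.i.prime} with the simplification coming from the cancellations), where $\Pi_i$ is as in \eqref{eqn:Pi}. Then the Riemannian gradient is obtained by applying the inverse metric $g^{ij}(\theta)$ from \eqref{eqn:g.primal.inverse}. Here I would use the factorization \eqref{eqn:g.matrix}, namely $\left(g_{ij}(\theta)\right) = \diag(\ppi(\theta))(I - {\bf 1}\ppi^\top(\theta))\frac{\partial\phi}{\partial\theta}(\theta)$, to multiply cleanly; the point is that the awkward Jacobian factor $\frac{\partial\phi}{\partial\theta}$ is exactly what converts coordinate gradients into the right objects, and the remaining $\diag(\ppi)$ and $(I - {\bf 1}\ppi^\top)$ factors act simply on the vector $\ppi - \Pi(\theta^r,\cdot)$.

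Having the gradient in hand, I would then verify that the resulting velocity field is tangent to the family of curves \eqref{eqn:primal.geodesic.candidate}: concretely, differentiate \eqref{eqn:primal.geodesic.candidate} in $s$ to get $\dot\theta_k = h'(s)\frac{e^{\theta_k^r} - e^{\theta_k^q}}{(1-h(s))e^{\theta_k^q} + h(s)e^{\theta_k^r}} = h'(s)\left(e^{\theta_k^r - \theta_k(s)} - e^{\theta_k^q - \theta_k(s)}\right)$, and check that $-\grad\, T\left(r \mid \cdot\right)$ evaluated at a point $\theta$ on such a curve is a positive multiple of this, after accounting for the fact that the geodesic is parameterized starting from $q$ while we want the direction toward $r$ from an interior point. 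The $n$th coordinate being $0$ and the relation $e^{\theta_k - \theta_k^q}$ versus $e^{\theta_k^r - \theta_k}$ will require care with the $\delta_{ij}$-versus-normalization bookkeeping. The main obstacle I anticipate is precisely this linear-algebra reconciliation: showing that applying the inverse metric \eqref{eqn:g.primal.inverse} to the coordinate gradient $\ppi_i - \Pi_i(\theta^r,\cdot)$ produces a vector proportional to $\left(e^{\theta_k^r - \theta_k} - e^{\theta_k^q - \theta_k}\right)_k$ up to a scalar that depends only on $t$ — everything else (existence, uniqueness via the ODE theorem, the convergence $\gamma(t)\to r$, and identifying the time change) is either already established in the text preceding the theorem or follows from standard ODE arguments. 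As the authors note, it is remarkable that a global object (the gradient flow of $T(r\mid\cdot)$, which sees all of $T$) coincides with a local one (the geodesic, defined by the connection coefficients at $\xi=\xi'$), so the algebraic miracle in the last step is exactly where the special structure of L-divergence is used; I would present that computation carefully and relegate the parallel proof of (ii), which uses the dual representation in Lemma \ref{lem:coord.rep} and \eqref{eqn:g.dual.inverse}, to a remark.
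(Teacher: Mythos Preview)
Your overall strategy matches the paper's: compute the Riemannian gradient of $T(r\mid\cdot)$ in primal coordinates and check it is tangent to the primal geodesic family \eqref{eqn:primal.geodesic.candidate}. The paper packages the gradient computation as Lemma \ref{lem:gradient} and then finishes in two lines by observing that $\dot\theta_k \propto e^{\theta_k^r - \theta_k}-1$ implies $\frac{d}{dt}e^{\theta_k(t)} \propto e^{\theta_k^r}-e^{\theta_k(t)}$, which forces $e^{\theta_k(t)} = (1-h(t))e^{\theta_k^q}+h(t)e^{\theta_k^r}$ and hence agreement with \eqref{eqn:primal.geodesic.candidate}.

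There is, however, a genuine gap in your computation of the coordinate gradient. You claim that the extra sum in \eqref{eqn:T.partial.i.prime} collapses because $\sum_\ell \partial\ppi_\ell/\partial\theta_i = 0$, giving $\frac{\partial}{\partial\theta_i}T(r\mid\theta)=\ppi_i(\theta)-\Pi_i(\theta^r,\theta)$. That is not correct: the sum in \eqref{eqn:T.partial.i.prime} is $\sum_\ell \Pi_\ell(\theta^r,\theta)\,\frac{1}{\ppi_\ell(\theta)}\frac{\partial\ppi_\ell}{\partial\theta_i}(\theta)$, which is weighted by $\Pi_\ell/\ppi_\ell$ and does not vanish. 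The correct simplification (carried out in the Appendix via Lemma \ref{lem:pi.derivatives}) is
\[
\frac{\partial}{\partial\theta_j}T(\theta^r\mid\theta)
=\sum_{\ell=1}^{n-1}\frac{\partial\phi_\ell}{\partial\theta_j}(\theta)\bigl(\ppi_\ell(\theta)-\Pi_\ell(\theta^r,\theta)\bigr),
\]
i.e.\ your expected vector $\ppi-\Pi$ appears only after being hit by the Jacobian $\frac{\partial\phi}{\partial\theta}$. This extra Jacobian is precisely what cancels against the $\frac{\partial\theta}{\partial\phi}$ factor in $g^{ij}$ from \eqref{eqn:g.matrix}, and that cancellation is the algebraic miracle you are anticipating; but because you placed the Jacobian only on the metric side, your bookkeeping would not close. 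Once this is fixed, the Riemannian gradient comes out as $\bigl(-e^{\theta_i^r-\theta_i}+1\bigr)_i$ up to a positive scalar (Lemma \ref{lem:gradient}), and you can bypass the awkward matching against $\bigl(e^{\theta_k^r-\theta_k}-e^{\theta_k^q-\theta_k}\bigr)_k$ at intermediate points by the one-line ODE argument above.
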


Recall the concept of exponential map. For $q \in \Delta_n$ and $v \in T_q\Delta_n$, consider the primal geodesic $\gamma$ starting at $q$ with initial velocity $v$. If $\gamma$ is defined up to time $1$, we define $\exp_q(v) = \gamma(1)$. The dual exponential map $\exp^*$ is defined analogously. As a corollary of Theorems \ref{thm:geodesics} and \ref{thm:gradient.flows} we have the following characterization of the primal and dual inverse exponential maps.

\begin{corollary}[Inverse exponential maps] 
Let $\exp$ and $\exp^*$ be the exponential maps with respect to the primal and dual connections respectively. For $p, q \in \Delta_n$ we have
\begin{enumerate}
\item[(i)] $\exp_q^{-1}(p) \propto -\grad \ T\left(p \mid \cdot\right)(q)$.
\item[(ii)] $\left(\exp_q^*\right)^{-1}(p) \propto -\grad\ T\left(\cdot \mid p\right)(q)$.
\end{enumerate}
\end{corollary}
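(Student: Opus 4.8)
The plan is to read the corollary straight off Theorems~\ref{thm:geodesics} and~\ref{thm:gradient.flows}, using the fact that $\exp_q^{-1}(p)$ records only the \emph{direction} at $q$ of the geodesic running from $q$ to $p$, and that a time change of a curve leaves that direction unchanged. So the whole argument is: identify the geodesic from $q$ to $p$, recognize the relevant gradient flow as a reparametrization of it, and differentiate the reparametrization at the common starting point.

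Fix $p, q \in \Delta_n$; if $p = q$ both sides of (i) and (ii) vanish, so assume $p \neq q$. For part~(i) I would first invoke Theorem~\ref{thm:geodesics}(i): there is a unique primal geodesic $\gamma : [0,1] \to \Delta_n$ with $\gamma(0) = q$ and $\gamma(1) = p$ (the Euclidean segment from $q$ to $p$, suitably time-changed), so $\exp_q^{-1}(p) := \dot\gamma(0)$ is well defined, and $\dot\gamma(0) \neq 0$ since a geodesic with vanishing initial velocity is constant. Now apply Theorem~\ref{thm:gradient.flows}(i) with the target point taken to be $p$: the primal flow $\tilde\gamma$ solving $\dot{\tilde\gamma}(t) = -\grad\, T(p \mid \cdot)(\tilde\gamma(t))$, $\tilde\gamma(0) = q$, is a time change of this same primal geodesic, say $\tilde\gamma(t) = \gamma(\tau(t))$ with $\tau$ smooth, $\tau(0) = 0$, and $\tau$ increasing (since $\tilde\gamma(t) \to p = \gamma(1)$ as $t \to \infty$). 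Differentiating at $t = 0$,
\[
-\grad\, T(p \mid \cdot)(q) \;=\; \dot{\tilde\gamma}(0) \;=\; \tau'(0)\,\dot\gamma(0) \;=\; \tau'(0)\,\exp_q^{-1}(p).
\]
Here $\tau'(0) > 0$, because the left-hand side is nonzero (the flow moves off $q$, converging to $p \neq q$) and $\dot\gamma(0) \neq 0$, so $\tau'(0) \neq 0$, and $\tau$ is increasing. Hence $\exp_q^{-1}(p) \propto -\grad\, T(p \mid \cdot)(q)$.

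Part~(ii) is the mirror-image argument: by Theorem~\ref{thm:geodesics}(ii) there is a unique dual geodesic $\gamma^*$ from $q$ to $p$ (a straight segment in the dual Euclidean coordinate), so $(\exp_q^*)^{-1}(p)$ is well defined, and by Theorem~\ref{thm:gradient.flows}(ii) the dual flow $\tilde\gamma^*$ solving $\dot{\tilde\gamma}^*(t) = -\grad\, T(\cdot \mid p)(\tilde\gamma^*(t))$, $\tilde\gamma^*(0) = q$, is a time change of $\gamma^*$; differentiating the reparametrization at $t = 0$ as above gives $(\exp_q^*)^{-1}(p) \propto -\grad\, T(\cdot \mid p)(q)$. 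I expect the only point beyond bookkeeping to be the claim that $\tau$ is a genuine smooth reparametrization near $t = 0$ with $\tau'(0) > 0$ — i.e.\ that the gradient flow does not start with zero speed and does not degenerate against the geodesic's parametrization — but this is precisely what the proof of Theorem~\ref{thm:gradient.flows} supplies, together with non-degeneracy of the L-divergence off the diagonal, so no extra work is needed.
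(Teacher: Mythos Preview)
Your proposal is correct and matches the paper's approach exactly: the paper presents this as an immediate corollary of Theorems~\ref{thm:geodesics} and~\ref{thm:gradient.flows} without giving a separate proof, and your argument---that the gradient flow is a time change of the geodesic, so differentiating the reparametrization at $t=0$ yields the proportionality---is precisely the reasoning implicit in calling it a corollary.
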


To prove Theorem \ref{thm:gradient.flows} we begin by computing the Riemannian gradients of $T\left(r \mid \cdot\right)$ and $T\left(\cdot \mid p\right)$. The computation is somewhat tricky and will be given in the Appendix. Recall the notations in \eqref{eqn:Pi}. 

\begin{lemma} [Riemannian gradients] \label{lem:gradient}
Let $p, q, r \in \Delta_n$.
\begin{enumerate}
\item[(i)] Under the primal coordinate system, we have
\begin{equation} \label{eqn:Riemannian.gradient.primal}
\begin{split}
\grad \ T\left(r \mid \cdot\right)(q) &= \sum_{i = 1}^{n-1} \left(-\frac{\ppi_i(\theta^r, \theta^q)}{\ppi_i(\theta^q)} + \frac{\ppi_n(\theta^r, \theta^q)}{\ppi_n(\theta^q)}\right) \frac{\partial}{\partial \theta_i^q} \\
  &= \frac{1}{\sum_{\ell = 1}^n \ppi_{\ell}(\theta^q) e^{\theta_{\ell}^r - \theta_{\ell}^q}} \sum_{i = 1}^{n-1} \left(-e^{\theta_i^r - \theta_i^q} + 1\right) \frac{\partial}{\partial \theta_i^q}.
\end{split}
\end{equation}
\item[(ii)] Under the dual coordinate system, we have
\begin{equation} \label{eqn:Riemannian.gradient.dual}
\begin{split}
\grad \ T\left(\cdot \mid p\right)(q) &= \sum_{i = 1}^{n-1} \left( \frac{\ppi_i^*(\phi^q, \phi^p)}{\ppi_i(\phi^q)} - \frac{\ppi_n^*(\phi^q, \phi^p)}{\ppi_n(\phi^q)}\right) \frac{\partial}{\partial \phi_i^q} \\
  &= \frac{1}{\sum_{\ell = 1}^n \ppi_{\ell}(\phi^q) e^{\phi_{\ell}^q - \phi_{\ell}^p}} \sum_{i = 1}^{n-1} \left( e^{\phi_i^q - \phi_i^p} - 1\right) \frac{\partial}{\partial \phi_i^q}.
\end{split}
\end{equation}
\end{enumerate}
\end{lemma}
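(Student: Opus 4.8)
The plan is to compute the Riemannian gradient directly from $\grad F = \sum_i \left( \sum_j g^{ij} \partial_j F \right) \partial_i$, working in the primal coordinate system for part (i) and in the dual coordinate system for part (ii), using the closed forms of $T$ from Lemma \ref{lem:coord.rep} together with the metric formulas of Theorem \ref{thm:Riemannian.metric}. I describe part (i); part (ii) is entirely parallel, with the sign changes that pervade Section \ref{sec:geometry.computation}.

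First I would fix $r$ and differentiate $\theta^q \mapsto T(r \mid \theta^q)$ using the representation $T(\theta^r \mid \theta^q) = \log \left( \sum_{\ell=1}^n \ppi_\ell(\theta^q) e^{\theta^r_\ell - \theta^q_\ell} \right) - \left( f(\theta^r) - f(\theta^q) \right)$. Writing $Z := \sum_{\ell=1}^n \ppi_\ell(\theta^q) e^{\theta^r_\ell - \theta^q_\ell}$, so that $\Pi_\ell(\theta^r, \theta^q) = \ppi_\ell(\theta^q) e^{\theta^r_\ell - \theta^q_\ell}/Z$ as in \eqref{eqn:Pi}, and recalling $\partial f / \partial \theta_j = \ppi_j$ (Lemma \ref{lem:weight.as.derivative}), the chain rule gives $\partial_{\theta^q_j} T(r \mid q) = Z^{-1} \sum_{\ell=1}^n \frac{\partial \ppi_\ell}{\partial \theta_j}(\theta^q) e^{\theta^r_\ell - \theta^q_\ell} - \Pi_j(\theta^r, \theta^q) + \ppi_j(\theta^q)$, where the $\ell = n$ term uses $\theta_n \equiv 0$ and $\partial \ppi_n / \partial \theta_j = -\sum_{m=1}^{n-1} \partial \ppi_m / \partial \theta_j$.

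The key step is to rewrite $\partial \ppi_\ell / \partial \theta_j$ via \eqref{eqn:g.primal}, i.e. $\frac{\partial \ppi_\ell}{\partial \theta_j} = \ppi_\ell (\delta_{\ell j} - \ppi_j) - g_{\ell j}$, extended to $\ell = n$ by setting $g_{nj} := -\sum_{m=1}^{n-1} g_{mj}$ (which is consistent with the preceding display). The contributions of the $\ppi_\ell (\delta_{\ell j} - \ppi_j)$ terms, after multiplying by $e^{\theta^r_\ell - \theta^q_\ell}/Z$ and summing, equal exactly $\Pi_j - \ppi_j$, and this cancels the remaining $- \Pi_j + \ppi_j$. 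Hence $\partial_{\theta^q_j} T(r \mid q) = - Z^{-1} \sum_{\ell=1}^n g_{\ell j}(\theta^q) e^{\theta^r_\ell - \theta^q_\ell} = - Z^{-1} \sum_{m=1}^{n-1} g_{mj}(\theta^q) \left( e^{\theta^r_m - \theta^q_m} - 1 \right)$, the last step again using $e^{\theta^r_n - \theta^q_n} = 1$. Raising the index is then immediate: by symmetry of $g$ one has $\sum_j g^{ij} g_{mj} = \delta_{im}$, so $\sum_j g^{ij}(\theta^q) \partial_{\theta^q_j} T(r \mid q) = Z^{-1} \left( 1 - e^{\theta^r_i - \theta^q_i} \right)$, which is the $i$-th component of the second form in \eqref{eqn:Riemannian.gradient.primal}; rewriting $e^{\theta^r_i - \theta^q_i}/Z = \Pi_i(\theta^r, \theta^q)/\ppi_i(\theta^q)$ and $1/Z = \Pi_n(\theta^r, \theta^q)/\ppi_n(\theta^q)$ gives the first form.

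Part (ii) runs the same argument with the dual coordinate representation of $T$ in Lemma \ref{lem:coord.rep} and with $\partial f^* / \partial \phi_j = -\ppi_j$; the dual metric formula \eqref{eqn:g.dual} supplies the opposite sign $\frac{\partial \ppi_\ell}{\partial \phi_j} = g^*_{\ell j} - \ppi_\ell (\delta_{\ell j} - \ppi_j)$ (with $g^*_{nj} := -\sum_m g^*_{mj}$), and the identical cancellation yields $\partial_{\phi^q_j} T(q \mid p) = (Z^*)^{-1} \sum_{m=1}^{n-1} g^*_{mj}(\phi^q) \left( e^{\phi^q_m - \phi^p_m} - 1 \right)$ with $Z^* := \sum_{\ell=1}^n \ppi_\ell(\phi^q) e^{\phi^q_\ell - \phi^p_\ell}$; contracting with $g^{*ij}$ finishes \eqref{eqn:Riemannian.gradient.dual}. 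The one genuinely delicate point is the bookkeeping of the $n$-th component — the index $\ell$ ranges over $1, \ldots, n$ on $\ppi$, $\Pi$ and the metric coefficients, while $\theta$ and $\phi$ are coordinates only for indices $\le n - 1$ — together with spotting the term-by-term cancellation that collapses $\partial_j T$ into a single contraction with $g$; the remaining computations are routine differentiations.
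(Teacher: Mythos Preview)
Your argument is correct, and it takes a genuinely different (and somewhat cleaner) route than the paper. Both approaches start by differentiating $T(r\mid\cdot)$ in primal coordinates, but they diverge at the key rewriting step. The paper substitutes $\partial\ppi_\ell/\partial\theta_j$ via Lemma~\ref{lem:pi.derivatives}, obtaining the intermediate identity
\[
\frac{\partial}{\partial \theta_j^q} T(\theta^r\mid\theta^q)=\sum_{\ell=1}^{n-1}\frac{\partial\phi_\ell}{\partial\theta_j^q}(\theta^q)\bigl(\ppi_\ell(\theta^q)-\Pi_\ell(\theta^r,\theta^q)\bigr),
\]
and then contracts with the \emph{explicit} inverse metric \eqref{eqn:g.primal.inverse}, cancelling the Jacobians through $\sum_j\frac{\partial\phi_\ell}{\partial\theta_j}\frac{\partial\theta_j}{\partial\phi_k}=\delta_{\ell k}$. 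You instead substitute $\partial\ppi_\ell/\partial\theta_j$ via the metric relation \eqref{eqn:g.primal} itself (extended consistently to $\ell=n$), which collapses $\partial_j T$ directly into $-Z^{-1}\sum_m g_{mj}(e^{\theta^r_m-\theta^q_m}-1)$; raising the index is then trivial from $g^{ij}g_{jm}=\delta_{im}$, and you never touch the inverse metric formula \eqref{eqn:g.primal.inverse} at all. Your route is shorter and more self-contained for this lemma; the paper's route has the side benefit that its intermediate identity \eqref{eqn:T.derivative.alternative} is reused in the proof of Theorem~\ref{thm:pyth}.
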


\begin{proof}[Proof of Theorem \ref{thm:gradient.flows}]
We prove (i) and leave (ii) to the reader. Let $\theta(t)$ be the primal representation of the primal flow starting at $q$. By Lemma \ref{lem:gradient}, at any time $t$ we have
\[
\dot{\theta}_k(t) \propto e^{\theta^r_k - \theta_k(t)} - 1 = \frac{1}{e^{\theta_k(t)}} \left( e^{\theta^r_k} - e^{\theta_k(t)}\right),
\]
where the constant of proportionality depends on $\theta(t)$ but is independent of $k$. It follows that
\begin{equation} \label{eqn:flow.derivative}
\frac{d}{dt} e^{\theta_k(t)} \propto e^{\theta^r_k} - e^{\theta_k(t)}, \quad k = 1, \ldots, n - 1.
\end{equation}
Comparing \eqref{eqn:flow.derivative} and \eqref{eqn:primal.geodesic.candidate} we see that the primal flow is a time change of the primal geodesic.
\end{proof}

\subsection{Generalized Pythagorean theorem}
Having characterized the primal and dual geodesics, we are ready to prove the generalized Pythagorean theorem. Our proof makes use of the Riemannian gradients given in Lemma \ref{lem:gradient}. The reason is that these gradients appear to have the correct scaling which is easier to handle, as can be seen in the proof (see \eqref{eqn:productuv}).

\begin{proof}[Proof of Theorem \ref{thm:pyth}]

Given $p, q, r \in \Delta_n$, consider the primal geodesic from $q$ to $r$ and the dual geodesic from $q$ to $p$. Let
\begin{equation} \label{eqn:XY}
u = -\grad \ T\left(\cdot \mid p\right)(q) \quad \text{and} \quad v = -\grad \ T\left(r \mid \cdot\right)(q).
\end{equation}
By Theorem \ref{thm:gradient.flows} $u$ and $v$ are proportional to the initial velocities of the two geodesics. Thus, it suffices to prove that the sign of \eqref{eqn:pyth} is the same as that of $\langle u, v \rangle$. This claim will be established by the following two lemmas.

\begin{lemma} \label{lemma:pqr.lemma}
The sign of $T\left(q \mid p\right) + T\left(r \mid q\right) - T\left(r \mid p\right)$ is the same as that of
\begin{equation} \label{eqn:ortho.condition}
1 - \sum_{k = 1}^n \frac{\Pi_k(q, p) \Pi_k(r, q)}{\ppi_k(q)}.
\end{equation}
\end{lemma}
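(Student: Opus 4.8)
The plan is to reduce the claim to an elementary comparison of three positive numbers. First I would expand the left-hand side using the portfolio form \eqref{eqn:L.divergence.fgp} of the L-divergence:
\[
T\left(q \mid p\right) + T\left(r \mid q\right) - T\left(r \mid p\right) = \log\left(\sum_{i=1}^n \ppi_i(p)\frac{q_i}{p_i}\right) + \log\left(\sum_{i=1}^n \ppi_i(q)\frac{r_i}{q_i}\right) - \log\left(\sum_{i=1}^n \ppi_i(p)\frac{r_i}{p_i}\right),
\]
the terms involving $\varphi$ cancelling in pairs. Abbreviating $A := \sum_i \ppi_i(p)\,q_i/p_i$, $B := \sum_i \ppi_i(q)\,r_i/q_i$ and $C := \sum_i \ppi_i(p)\,r_i/p_i$ --- all strictly positive since $\ppi(\cdot)$ takes values in $\Delta_n$ by Assumption \ref{ass:regularity} --- the display equals $\log(AB/C)$, whose sign is that of $AB - C$.

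Next I would rewrite the expression \eqref{eqn:ortho.condition} in Euclidean coordinates. Since $e^{\theta_i - \theta_i'} = (p_i/p_i')(p_n'/p_n)$, the common factor $p_n'/p_n$ cancels in the normalization in \eqref{eqn:Pi}, giving
\[
\Pi_k(q,p) = \frac{\ppi_k(p)\,q_k/p_k}{A}, \qquad \Pi_k(r,q) = \frac{\ppi_k(q)\,r_k/q_k}{B}.
\]
Substituting these into \eqref{eqn:ortho.condition}, the factors $\ppi_k(q)$ cancel and $\frac{q_k}{p_k}\cdot\frac{r_k}{q_k} = \frac{r_k}{p_k}$, so
\[
\sum_{k=1}^n \frac{\Pi_k(q,p)\,\Pi_k(r,q)}{\ppi_k(q)} = \frac{1}{AB}\sum_{k=1}^n \ppi_k(p)\frac{r_k}{p_k} = \frac{C}{AB}.
\]
Hence \eqref{eqn:ortho.condition} equals $1 - C/(AB) = (AB-C)/(AB)$, whose sign is again that of $AB - C$ because $AB > 0$.

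Comparing the two computations finishes the proof: both $T\left(q \mid p\right)+T\left(r \mid q\right)-T\left(r \mid p\right)$ and the quantity in \eqref{eqn:ortho.condition} are strictly positive, zero, or strictly negative exactly when $AB > C$, $AB = C$, or $AB < C$ respectively. There is essentially no serious obstacle here; the only point requiring a little care is the passage between the exponential coordinates in which $\Pi_k$ is defined in \eqref{eqn:Pi} and the Euclidean coordinates used in \eqref{eqn:L.divergence.fgp}, together with the observation that positivity of the portfolio weights under Assumption \ref{ass:regularity} makes all the divisions above sign-preserving.
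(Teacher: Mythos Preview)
Your proof is correct and follows essentially the same route as the paper: both arguments observe that the $\varphi$-terms cancel so that the sign of $T(q\mid p)+T(r\mid q)-T(r\mid p)$ is that of $AB-C$, and then identify \eqref{eqn:ortho.condition} with $(AB-C)/(AB)$. The paper carries this out in exponential coordinates via a double-sum rearrangement with Kronecker deltas, whereas you work directly in Euclidean coordinates and substitute the closed form $\Pi_k(q,p)=\ppi_k(p)(q_k/p_k)/A$; the underlying computation is the same.
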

\begin{proof}
By Lemma \ref{lem:coord.rep}, the sign of  $T\left(q \mid p\right) + T\left(r \mid q\right) - T\left(r \mid p\right)$ is the same as that of
\[
\left( \sum_{i = 1}^n \ppi_i(\theta^p) e^{\theta_i^q - \theta_i^p} \right) \left(\sum_{j = 1}^n \ppi_j(\theta^q) e^{\theta_j^r - \theta_j^q}\right) - \sum_{i = 1}^n \ppi_i(\theta^p) e^{\theta_i^r - \theta_i^p}.
\]
Rearranging, we have
\[
-\sum_{i, j = 1}^n \ppi_i(\theta^p) (\delta_{ij} - \ppi_j(\theta^q)) e^{\theta_j^r - \theta_j^q} e^{\theta_i^q - \theta_i^p}.
\]
Since scaling does not change sign, we may consider instead the quantity
\[
-\sum_{i, j = 1}^n \ppi_i(\theta^p) (\delta_{ij} - \ppi_j(\theta^q)) \frac{\Pi_j(\theta^r, \theta^q)}{\ppi_j(\theta^q)}  \frac{\Pi_i(\theta^q, \theta^p)}{\ppi_i(\theta^p)}.
\]
We get \eqref{eqn:ortho.condition} by expanding.
\end{proof}

\begin{lemma}
Consider the tangent vectors $u$ and $v$ defined by \eqref{eqn:XY}. Then
\begin{equation} \label{eqn:productuv}
\langle u, v \rangle = 1 - \sum_{k = 1}^n \frac{\Pi_k(\theta^q, \theta^p) \Pi_k(\theta^r, \theta^q)}{\ppi_k(\theta^q)}.
\end{equation}
\end{lemma}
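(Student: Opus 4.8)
The plan is to exploit the fact that the two gradients have clean expressions in \emph{different} coordinate systems: by Lemma \ref{lem:gradient}(i), $v = -\grad\ T\left(r \mid \cdot\right)(q)$ is simple in the primal coordinate system, while by Lemma \ref{lem:gradient}(ii), $u = -\grad\ T\left(\cdot \mid p\right)(q)$ is simple in the dual coordinate system. So rather than converting one into the other via a Jacobian, I would pair them directly using the ``mixed'' Gram matrix of the two coordinate frames at $q$. Writing $\frac{\partial}{\partial \theta_i}$ and $\frac{\partial}{\partial \phi_j}$ for the primal and dual coordinate vector fields, the first step is the identity
\[
\left\langle \frac{\partial}{\partial \theta_i}, \frac{\partial}{\partial \phi_j} \right\rangle = \ppi_i(q)\left(\delta_{ij} - \ppi_j(q)\right), \qquad 1 \leq i, j \leq n - 1,
\]
which follows from the matrix formula \eqref{eqn:g.matrix}, namely $\left(g_{ij}(\theta)\right) = \diag(\ppi(\theta))\left(I - {\bf 1}\ppi^{\top}(\theta)\right)\frac{\partial \phi}{\partial \theta}(\theta)$: indeed $\frac{\partial}{\partial \phi_j} = \sum_k \frac{\partial \theta_k}{\partial \phi_j}\frac{\partial}{\partial \theta_k}$, so the mixed Gram matrix equals $g(\theta)\frac{\partial \theta}{\partial \phi}(\phi)$, and the Jacobian $\frac{\partial \phi}{\partial \theta}$ cancels against its inverse by \eqref{eqn:jacobian.identity}, leaving $\diag(\ppi)\left(I - {\bf 1}\ppi^{\top}\right)$.

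The second ingredient, which I expect to be the key point, is the identity $\Pi_i^*(\phi^q, \phi^p) = \Pi_i(\theta^q, \theta^p)$ for $1 \leq i \leq n$. This reconciles the dual-coordinate data entering $u$ via Lemma \ref{lem:gradient}(ii) with the primal-coordinate quantities appearing in the statement \eqref{eqn:productuv}. To prove it, substitute the primal--dual relation $\phi_i = \theta_i - \log\frac{\ppi_i(\theta)}{\ppi_n(\theta)}$ at the points $q$ and $p$ into $e^{\phi_i^q - \phi_i^p}$; a short computation gives $\ppi_i(q)\, e^{\phi_i^q - \phi_i^p} = \frac{\ppi_n(q)}{\ppi_n(p)}\,\ppi_i(p)\, e^{\theta_i^q - \theta_i^p}$, and since the prefactor $\ppi_n(q)/\ppi_n(p)$ is independent of $i$ it cancels in the normalization defining $\Pi_i^*$ (recalling \eqref{eqn:Pi}), yielding exactly $\Pi_i(\theta^q, \theta^p)$.

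With these in hand, I would introduce $A_i := \Pi_i(\theta^r, \theta^q)/\ppi_i(q)$ and $B_i := \Pi_i(\theta^q, \theta^p)/\ppi_i(q)$ for $1 \leq i \leq n$, so that Lemma \ref{lem:gradient} together with the identity above gives $v = \sum_{i = 1}^{n-1}(A_i - A_n)\frac{\partial}{\partial \theta_i}$ and $u = \sum_{j = 1}^{n-1}(B_n - B_j)\frac{\partial}{\partial \phi_j}$. Hence
\[
\langle u, v \rangle = \sum_{i, j = 1}^{n - 1} (A_i - A_n)(B_n - B_j)\,\ppi_i(q)\left(\delta_{ij} - \ppi_j(q)\right).
\]
The remainder is bookkeeping with the normalizations $\sum_{k = 1}^n \ppi_k(q) = \sum_{k = 1}^n \Pi_k(\theta^r, \theta^q) = \sum_{k = 1}^n \Pi_k(\theta^q, \theta^p) = 1$ (the last of which reads $\sum_k \ppi_k(q) B_k = 1$). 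First, $\sum_{j = 1}^{n-1}\ppi_j(q)(B_n - B_j) = B_n - 1$, which reduces the inner sum so that $\langle u, v\rangle = \sum_{i = 1}^{n-1}(A_i - A_n)\ppi_i(q)(1 - B_i)$; expanding this, using $\ppi_i(q)A_i = \Pi_i(\theta^r, \theta^q)$ and $\ppi_i(q)A_iB_i = \Pi_i(\theta^r, \theta^q)\Pi_i(\theta^q, \theta^p)/\ppi_i(q)$, and absorbing the $i = n$ terms, collapses everything to $1 - \sum_{k = 1}^n \Pi_k(\theta^q, \theta^p)\Pi_k(\theta^r, \theta^q)/\ppi_k(q)$, which is \eqref{eqn:productuv}. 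The only genuinely delicate step is the identity $\Pi_i^* = \Pi_i$; once it is established the rest is a routine, if slightly lengthy, manipulation of the simplex constraints.
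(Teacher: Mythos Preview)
Your argument is correct and takes a genuinely different route from the paper's. The paper works entirely in the primal coordinate system: it writes $\langle u,v\rangle = \sum_{k,\ell} g^{k\ell}(\theta^q)\,\partial_{\theta_k}T(\cdot\mid p)\,\partial_{\theta_\ell}T(r\mid\cdot)$, inserts the inverse metric \eqref{eqn:g.primal.inverse} (which carries the Jacobian $\partial\theta/\partial\phi$) and the ``alternative'' derivative formula \eqref{eqn:T.derivative.alternative} (which carries $\partial\phi/\partial\theta$), and then lets the two Jacobians annihilate to reach the intermediate expression \eqref{eqn:computation.claim}. By contrast, you use the Riemannian gradients of Lemma~\ref{lem:gradient} in their \emph{native} coordinate systems and pair them through the mixed Gram matrix $\langle\partial_{\theta_i},\partial_{\phi_j}\rangle=\ppi_i(\delta_{ij}-\ppi_j)$, where the Jacobian cancellation happens once and for all at the level of \eqref{eqn:g.matrix}. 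The price you pay is the identity $\Pi_i^*(\phi^q,\phi^p)=\Pi_i(\theta^q,\theta^p)$, which the paper never needs; but it is a clean structural fact (and your derivation of it from \eqref{eqn:portfolio.transport} is fine). What your approach buys is that you never touch the raw partial derivatives $\partial_{\theta_\ell}T(r\mid\cdot)$ or the inverse metric explicitly, and the final collapse via the simplex constraints is slightly shorter than expanding \eqref{eqn:computation.claim}; what the paper's approach buys is that it stays in a single chart and requires no new identity beyond those already established in Section~\ref{sec:geometry.computation}.
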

\begin{proof}
For this computation we use the primal coordinate system. We have
\begin{equation*}
\begin{split}
u &= -\grad \ T\left(\cdot \mid p\right)(q) =-\sum_{i, k = 1}^{n-1} g^{ik}(q) \frac{\partial}{\partial \theta_k^q} T\left( \cdot \mid p \right)(\theta^q) \frac{\partial }{\partial \theta_i^q}\\
v &= -\grad \ T\left(r \mid \cdot\right)(q) = -\sum_{j, \ell = 1}^{n-1} g^{j\ell}(q) \frac{\partial}{\partial \theta_{\ell}^q} T\left( r \mid \cdot \right)(\theta^q) \frac{\partial }{\partial \theta_j^q}.
\end{split}
\end{equation*}
Using the definition of the Riemannian inner product, we compute
\begin{equation*}
\begin{split}
\langle u, v \rangle &= \sum_{i, j = 1}^{n-1} g_{ij}(q) \sum_{k, \ell = 1}^{n-1} g^{ik}(q) g^{j\ell}(q) \frac{\partial}{\partial \theta_k^q} T\left( \cdot \mid p \right)(\theta^q) \frac{\partial}{\partial \theta_{\ell}^q} T\left( r \mid \cdot \right)(\theta^q) \\
  &= \sum_{k, \ell = 1}^{n-1} g^{k\ell}(\theta^q) \frac{\partial}{\partial \theta_k^q} T\left( \cdot \mid p \right)(\theta^q) \frac{\partial}{\partial \theta_{\ell}^q} T\left( r \mid \cdot \right)(\theta^q).
\end{split}
\end{equation*}
By \eqref{eqn:g.primal.inverse}, \eqref{eqn:T.derivative.phi.2} and \eqref{eqn:T.derivative.alternative}, we have
\begin{eqnarray*}
g^{k\ell}(q) &=& \frac{1}{\ppi_{k}(q)} \frac{\partial \theta_{\ell}}{\partial \phi_k}(\phi^q) + \frac{1}{\ppi_n(\theta^q)} \sum_{\alpha = 1}^{n-1} \frac{\partial \theta_{\ell}}{\partial \phi_\alpha}(\phi^q), \\
\frac{\partial}{\partial \theta_k^q} T\left( \cdot \mid p \right)(\theta^q) &=& \Pi_k(\theta^q, \theta^p) - \ppi_k(\theta^q), \\
\frac{\partial}{\partial \theta_{\ell}^q} T\left( r \mid \cdot \right)(\theta^q) &=& \sum_{\beta = 1}^{n - 1} \frac{\partial \phi_{\beta}}{\partial \theta_{\ell}^q}(\theta^q) \left(\ppi_{\beta}(\theta^q) - \Pi_{\beta}(\theta^r, \theta^q)\right).
\end{eqnarray*}

\medskip 
\noindent
{\it Claim.} We have
\begin{equation} \label{eqn:computation.claim}
\langle u, v \rangle = \sum_{k, \ell = 1}^{n-1} (\Pi_k(\theta^q, \theta^p) - \ppi_k(\theta^q))(\ppi_{\ell}(\theta^q) - \Pi_{\ell}(\theta^r, \theta^q))\left( \frac{\delta_{k\ell}}{\ppi_k(\theta^q)} + \frac{1}{\ppi_n(\theta^q)}\right).
\end{equation}

To see this, write
\begin{equation*}
\begin{split}
\langle u, v \rangle &= \sum_{k, \beta = 1}^{n-1} (\Pi_k(\theta^q, \theta^p) - \ppi_k(\theta^q))(\ppi_{\beta}(\theta^q) - \Pi_{\beta}(\theta^r, \theta^q)) \\
 &\quad \quad \cdot \sum_{\ell = 1}^{n-1} \left( \frac{1}{\ppi_k(q)} \frac{\partial \phi_{\beta}}{\partial \theta_{\ell}^q}(\theta^q) \frac{\partial \theta_{\ell}}{\partial \theta_k}(\phi^q) + \frac{1}{\ppi_n(q)} \sum_{\alpha = 1}^{n-1} \frac{\partial \phi_{\beta}}{\partial \theta_{\ell}^q}(\theta^q) \frac{\partial \theta_{\ell}}{\partial \phi_{\alpha}}(\phi^q)\right).
\end{split}
\end{equation*}
The last expression can be simplified using the identities
\[
\frac{\partial \phi_{\beta}}{\partial \theta_{\ell}^q}(\theta^q) \frac{\partial \theta_{\ell}}{\partial \theta_k}(\phi^q) = \delta_{\beta k}, \quad \frac{\partial \phi_{\beta}}{\partial \theta_{\ell}^q}(\theta^q) \frac{\partial \theta_{\ell}}{\partial \phi_{\alpha}}(\phi^q) = \delta_{\alpha \beta},
\]
and this gives the claim.

Finally, expanding and simplifying \eqref{eqn:computation.claim}, we obtain the desired identity \eqref{eqn:productuv}.
\end{proof}
With these two lemmas the proof is complete.
\end{proof}

\subsection{Application to mathematical finance} \label{sec:application.fgp} 
In this subsection we explain how the new information geometry can be applied to finance. For financial background and further details we refer the reader to \cite{PW14}. Consider sequential investment in a stock market with $n$ stocks. At time $t = 0, 1, 2, \ldots$, let $\mu(t) \in \Delta_n$ be the distribution of capital in the market. Explicitly, let $X_i(t) > 0$ be the market capitalization of stock $i$. Then
\begin{equation} \label{eqn:market.weight}
\mu_i(t) = \frac{X_i(t)}{X_1(t) + \cdots + X_n(t)}
\end{equation}
is the market weight of stock $i$.

Let $\ppi: \Delta_n \rightarrow \Delta_n$ be a portfolio map. It defines a self-financing investment strategy such that if the current state of the market is $\mu(t) = p$, we distribute our capital among the stocks according to the vector $\ppi(p)$. For example, if $\ppi(p) = \left(0.4, 0.5, 0.1\right)$, we invest $40\%$ in stock $1$, $50\%$ in stock $2$, and the rest in stock $3$. After each period, we perform the necessary trading such that the proportions of capitals invested are rebalanced to $\ppi(\mu(t + 1))$. If one chooses the identity map $\ppi(p) \equiv p$, the resulting portfolio is called the market portfolio which we denote by $\mu$. The market portfolio plays the role of benchmark. 

Now let $\ppi$ be the portfolio map generated by an exponentially concave function $\varphi$ (Definition \ref{def:fgp}), and consider the value of the resulting portfolio beginning with $\$1$ at time $0$. Also consider the value of the market portfolio starting with $\$1$. Consider the ratio
\[
V(t) := \frac{\text{value of the portfolio $\ppi$ at time $t$}}{\text{value of the portfolio $\mu$ at time $t$}}
\]
called the relative value of the portfolio $\ppi$. Under suitable conditions (see \cite{PW13, PW14}) it can be shown that $V(0) = 1$ and
\[
\frac{V(t + 1)}{V(t)} = \sum_{i = 1}^n \ppi_i(\mu(t)) \frac{\mu_i(t + 1)}{\mu_i(t)}.
\]

\begin{proposition} [Fernholz's decomposition]  \cite{F99, PW14}
For all time $t$ we have
\[
\log V(t) = (\varphi(\mu(t)) - \varphi(\mu(0))) + \sum_{s = 0}^{t - 1} T\left(\mu(s + 1) \mid \mu(s)\right).
\]
\end{proposition}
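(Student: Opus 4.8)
The plan is to combine the stated multiplicative one-step update of the relative value with the functionally-generated-portfolio form of the L-divergence in \eqref{eqn:L.divergence.fgp}, and then telescope. First I would use $V(0) = 1$ together with $V(s+1)/V(s) = \sum_{i=1}^n \ppi_i(\mu(s)) \mu_i(s+1)/\mu_i(s)$ to write
\[
\log V(t) = \sum_{s = 0}^{t-1} \log \frac{V(s+1)}{V(s)} = \sum_{s = 0}^{t-1} \log\left(\sum_{i = 1}^n \ppi_i(\mu(s)) \frac{\mu_i(s+1)}{\mu_i(s)}\right).
\]
Each summand is finite and well defined because, under Assumption \ref{ass:regularity}, $\ppi$ maps $\Delta_n$ into $\Delta_n$ and all market weights $\mu_i(s)$ are strictly positive, so the inner sum is a strictly positive number.

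Next I would apply \eqref{eqn:L.divergence.fgp} with $p = \mu(s)$ and $q = \mu(s+1)$ to each term, rewriting
\[
\log\left(\sum_{i = 1}^n \ppi_i(\mu(s)) \frac{\mu_i(s+1)}{\mu_i(s)}\right) = T\left(\mu(s+1) \mid \mu(s)\right) + \bigl(\varphi(\mu(s+1)) - \varphi(\mu(s))\bigr).
\]
Summing this identity over $s = 0, 1, \ldots, t-1$, the increments $\varphi(\mu(s+1)) - \varphi(\mu(s))$ telescope to $\varphi(\mu(t)) - \varphi(\mu(0))$, and we are left with
\[
\log V(t) = \bigl(\varphi(\mu(t)) - \varphi(\mu(0))\bigr) + \sum_{s = 0}^{t-1} T\left(\mu(s+1) \mid \mu(s)\right),
\]
which is the claimed decomposition. (Equivalently, one can present this as an induction on $t$: the base case $t = 0$ is $\log V(0) = 0$ with an empty sum, and the inductive step uses $\log V(t+1) = \log V(t) + \log\bigl(V(t+1)/V(t)\bigr)$ together with the same application of \eqref{eqn:L.divergence.fgp}.)

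I do not expect any genuine obstacle: the statement is essentially a bookkeeping consequence of \eqref{eqn:L.divergence.fgp}, which is already established, and of the one-period update rule quoted from \cite{PW13, PW14}. The only point worth a remark is well-definedness of the logarithms, i.e.\ the strict positivity of each factor $\sum_i \ppi_i(\mu(s)) \mu_i(s+1)/\mu_i(s)$, which follows immediately from $\ppi(\mu(s)) \in \Delta_n$ and $\mu(s), \mu(s+1) \in \Delta_n$.
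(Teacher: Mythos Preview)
Your argument is correct and is exactly the standard telescoping proof of Fernholz's decomposition. Note, however, that the paper itself does not give a proof of this proposition: it is stated with a citation to \cite{F99, PW14} and treated as a known result, so there is no paper proof to compare against.
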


In this sense, the cumulative L-divergence measures the volatility harvested by the portfolio $\ppi$. See \cite{W15} for optimization of this quantity.

\begin{figure}[t!]
\centering
\makebox[\textwidth][c]{\includegraphics[scale=0.4]{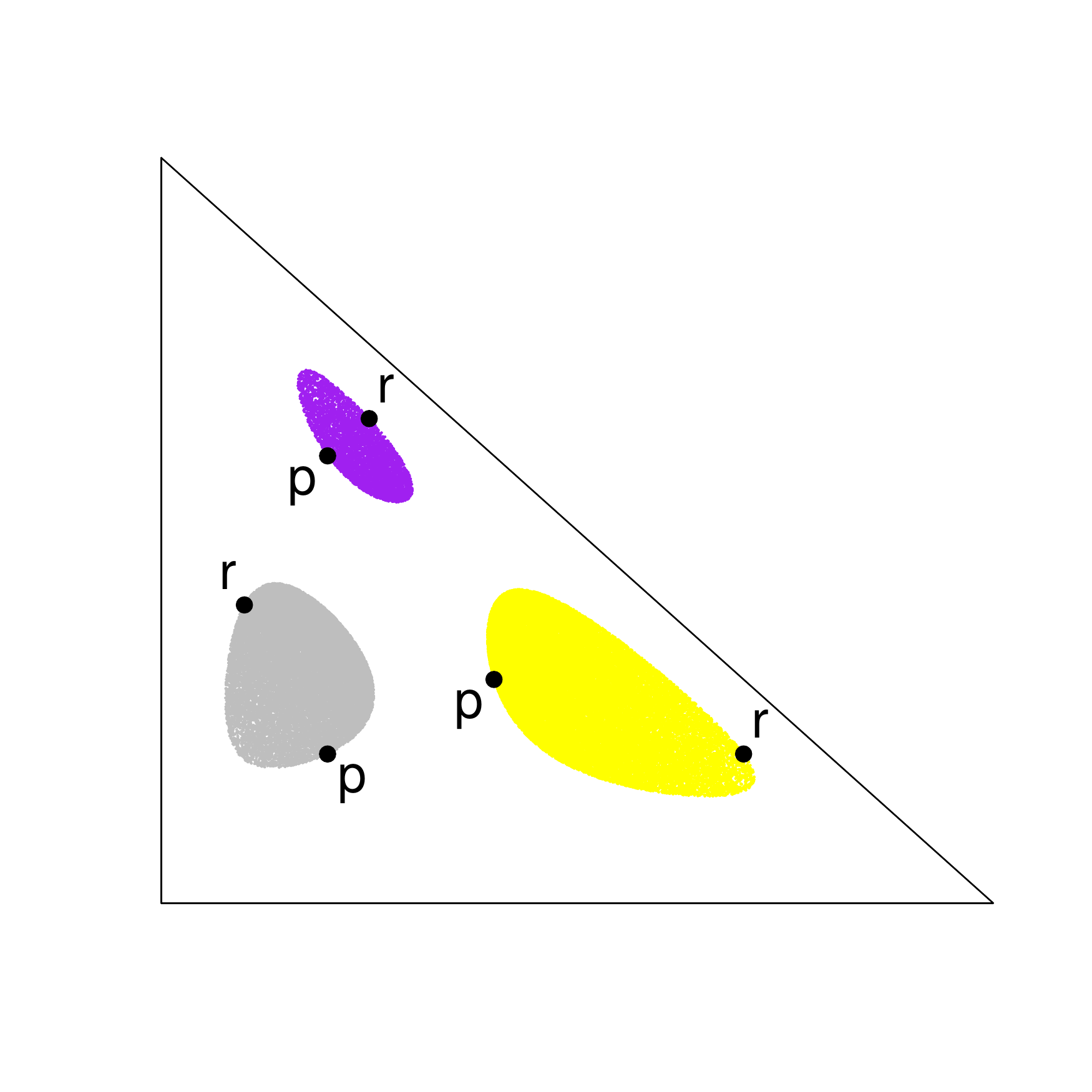}}
\vspace{-1cm}
\caption{Plots of the region $\{q \in \Delta_n: T\left(q \mid p\right) + T\left(r \mid q\right) \leq T\left(r \mid p\right)\}$ for the equal-weighted portfolio $\ppi \equiv \left(\frac{1}{3}, \frac{1}{3}, \frac{1}{3}\right)$, for several values of $p$ and $r$. Each point $q$ on the boundary gives a `right-angled geodesic triangle' in the sense of Theorem \ref{thm:pyth}.} \label{fig:region}
\end{figure}

The above discussion assumes that the portfolio rebalances every period (say every week). In practice, due to transaction costs and other considerations, we may want to rebalance at other frequencies. Let $0 = t_0 < t_1 < t_2$ be three time points and consider two ways of implementing the portfolio $\ppi$: (i) rebalance at times $t_0$ and $t_1$ (ii) rebalance at time $t_0$ only. By Fernholz's decomposition, the relative values of the two implementations at time $t_2$ are
\begin{eqnarray*}
\log V^{(1)}(t_2) &=& (\varphi(\mu(t_2)) - \varphi(\mu(t_0))) + T\left(\mu(t_1) \mid \mu(t_0)\right) + T\left(\mu(t_2) \mid \mu(t_1)\right), \\
\log V^{(2)}(t_2) &=& (\varphi(\mu(t_2)) - \varphi(\mu(t_0))) + T\left(\mu(t_2) \mid \mu(t_0)\right). 
\end{eqnarray*}
Letting $\mu(t_0) = p$, $\mu(t_1) = q$ and $\mu(t_2) = r$, the difference between the two values is
\[
\log V^{(1)}(t_2) - \log V^{(2)}(t_2) = T\left(q \mid p\right) + T\left(r \mid q\right) - T\left(r \mid p\right).
\]

By the generalized Pythagorean theorem, the sign of the difference is determined by the angle between the dual geodesic from $q$ to $p$ and the primal geodesic from $q$ to $r$. In Figure \ref{fig:region} we illustrate the result for the equal-weighted portfolio of $3$ stocks. In the figure, rebalancing at time $t_1$ creates extra profit if and only if $q$ lies outside the region. This shows convincingly that rebalancing more frequently is not always better, even in the absence of transaction costs. More importantly, our framework provides a geometric and model-independent way of saying that the sequence $(p, q, r)$ is `more volatile' than the subsequence $(p, r)$.

\section{Displacement interpolation} \label{sec:interpolation}
In this final section we consider displacement interpolation for the optimal transport problem formulated in Section \ref{sec:the.transport.problem}. We refer the reader to \cite[Chapter 5]{V03} and \cite[Chapter 7]{V08} for introductions to displacement interpolation. 

\subsection{Time dependent transport problem}
Let $P^{(0)}$ and $P^{(1)}$ be Borel probability measures on ${\mathbb{R}}^{n-1}$. Consider the transport problem with cost $c$ given by \eqref{eqn:psi.intro}. Suppose the transport problem is solved in terms of the exponentially concave function $\varphi$ on $\Delta_n$. Letting $f = \varphi + \psi$, the (Monge) optimal transport map is given by the $c$-supergradient of $f$. In particular, $P^{(1)}$ is the pushforward of $P^{(0)}$ under $F := \nabla^c f$:
\[
P^{(1)} = F_{\#} P^{(0)}.
\]

The idea of displacement interpolation is to introduce an additional time structure. We want to define an `action' ${\mathcal{A}}(\cdot)$ on curves such that the cost function is given by
\[
c(\theta, \phi) = \min_{\gamma} {\mathcal{A}}(\gamma),
\]
where the minimum is taken over smooth curves $\gamma: [0, 1] \rightarrow {\mathbb{R}}^{n-1}$ satisfying $\gamma(0) = \theta$ and $\gamma(1) = \phi$. For each pair $(\theta, \phi)$, a minimizing curve $\gamma$ gives a time-dependent map transporting $\theta$ to $\phi$ along a continuous path. Let $F^{(t)}: {\mathbb{R}}^{n-1} \rightarrow {\mathbb{R}}^{n-1}$ be defined by $F^{(t)}(\theta) = \gamma(t)$, where $\gamma$ is the minimizing curve for the pair $(\theta, F(\theta))$. We want to define ${\mathcal{A}}$ in such a way that $F^{(t)}$ is an optimal transport map for the probability measures $P^{(0)}$ and $P^{(t)}$ where
\[
P^{(t)} = \left(F^{(t)}\right)_{\#} P_0.
\]

For the classical Euclidean case with cost $|x - y|^2$ the action is ${\mathcal{A}}(\gamma) = \int_0^1 |\dot{\gamma}(t)|^2 dt$ and the optimal transport map has the form $F(x) = x - \nabla h(x)$ where $h$ is an ordinary concave function. The displacement interpolations are linear interpolations:
\[
F^{(t)} = (1 - t) \mathrm{Id} + t F
\]
(See \cite[Theorem 5.5, Theorem 5.6]{V03}.) In particular, the individual trajectories (minimizing curves) are Euclidean straight lines which can be regarded as the geodesics of a flat geometry. In this section we formulate and prove an analogous statement for our transport problem.

\subsection{Lagrangian action and portfolio interpolation}
We begin by defining an appropriate action. Let $\gamma: [0, 1] \rightarrow {\mathbb{R}}^{n-1}$ be a smooth curve with $\gamma(0) = \theta$. For each $t$, define $q(t) \in \Delta_n$ such that its exponential coordinate is $\theta - \gamma(t)$, i.e.,
\begin{equation} \label{eqn:q.map}  
\frac{q_i(t)}{q_n(t)} = e^{\theta_i - \gamma_i(t)}, \quad 1 \leq i \leq n - 1.
\end{equation}
Equivalently, we have $q_i(t) = e^{\theta_i - \gamma_i(t) - \psi(\theta - \gamma(t))}$, for $1 \leq i \leq n - 1$. Intuitively, we think of $q(t)$ as the portfolio at time $t$ (in the sense of interpolation). Note that $q(0) = \left(\frac{1}{n}, \ldots, \frac{1}{n}\right)$.

We define the Lagrangian action by 
\begin{equation}  \label{eqn:action}
{\mathcal{A}}(\gamma) = \int_0^1 -\log\left(\frac{1}{n} + \dot{q}_n(t)\right) dt.
\end{equation}
We take $-\log (\cdot) = \infty$ if the argument is not in $(0, \infty)$.  An alternative representation of the action is
\begin{equation} \label{eqn:action2}
{\mathcal{A}}(\gamma) = \int_0^1 -\log\left(\frac{1}{n} + \frac{d}{dt} e^{-\psi(\gamma(0) - \gamma(t))} \right) dt.
\end{equation}

\begin{lemma}
For any $\theta, \phi \in {\mathbb{R}}^{n-1}$ we have
\begin{equation} \label{eqn:cost.as.min}
c(\theta, \phi) = \psi(\theta - \phi) = \min \left\{ {\mathcal{A}}(\gamma): \gamma(0) = \theta, \gamma(1) = \phi\right\}.
\end{equation}
The action is minimized by the curve
\begin{equation} \label{eqn:minimizing.curve}
\gamma_i(t) = \theta_i - \log \frac{(1 - t) \frac{1}{n} + t q_i(1)}{(1 - t) \frac{1}{n} + t q_n(1)}, \quad 1 \leq i \leq n - 1.
\end{equation}
In particular, for this minimizing curve we have
\begin{equation} \label{eqn:portfolio.interpolation}
q(t) = (1 - t)\left(\frac{1}{n}, \ldots, \frac{1}{n}\right) + t q(1).
\end{equation}
\end{lemma}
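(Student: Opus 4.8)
The plan is to verify directly that the proposed curve \eqref{eqn:minimizing.curve} is admissible and achieves the value $\psi(\theta-\phi)$, and then to show that no competing curve can do better. First I would check the endpoint and interpolation claims: from the definition \eqref{eqn:q.map} of $q(t)$ and the formula \eqref{eqn:minimizing.curve}, a short computation gives $q_i(t) = (1-t)\frac1n + t\, q_i(1)$ for $1 \le i \le n-1$, hence \eqref{eqn:portfolio.interpolation} holds (the $n$-th coordinate follows since the $q_i$ sum to $1$). In particular $q(1)$ has exponential coordinate $\theta - \phi$, so $q_i(1) = e^{\theta_i - \phi_i - \psi(\theta - \phi)}$, and $q(0) = (\frac1n, \ldots, \frac1n)$ as required; the curve satisfies $\gamma(0) = \theta$, $\gamma(1) = \phi$.

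Next I would evaluate the action on this curve. Along \eqref{eqn:portfolio.interpolation} we have $q_n(t) = (1-t)\frac1n + t\, q_n(1)$, so $\dot q_n(t) = q_n(1) - \frac1n$ is constant, and therefore
\begin{equation*}
\frac1n + \dot q_n(t) = q_n(1) = e^{-\psi(\theta - \phi)},
\end{equation*}
which is a positive constant independent of $t$. Plugging into \eqref{eqn:action} gives ${\mathcal{A}}(\gamma) = \int_0^1 \psi(\theta - \phi)\,dt = \psi(\theta - \phi) = c(\theta,\phi)$. This establishes the ``$\le$'' direction of \eqref{eqn:cost.as.min}.

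For the reverse inequality, the key observation is that the integrand is convex in the relevant quantity. Writing $m(t) := \frac1n + \dot q_n(t) = \frac1n + \frac{d}{dt} e^{-\psi(\gamma(0)-\gamma(t))}$, any admissible $\gamma$ has $\int_0^1 \dot q_n(t)\,dt = q_n(1) - q_n(0) = e^{-\psi(\theta-\phi)} - \frac1n$, where $q_n(1) = e^{-\psi(\theta-\phi)}$ is forced by the constraint $\gamma(1) = \phi$. Hence $\int_0^1 m(t)\,dt = e^{-\psi(\theta-\phi)}$, a fixed number. Since $-\log$ is convex on $(0,\infty)$ (and we set it to $+\infty$ off $(0,\infty)$, so nonadmissible $m$ only increases the action), Jensen's inequality gives
\begin{equation*}
{\mathcal{A}}(\gamma) = \int_0^1 -\log m(t)\,dt \ge -\log\!\left(\int_0^1 m(t)\,dt\right) = -\log e^{-\psi(\theta-\phi)} = \psi(\theta-\phi),
\end{equation*}
with equality precisely when $m(t)$ is constant in $t$, which is exactly the situation for the curve \eqref{eqn:minimizing.curve}. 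This proves \eqref{eqn:cost.as.min} and identifies the minimizer.

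I expect the only genuinely delicate point to be the bookkeeping verifying that \eqref{eqn:minimizing.curve} indeed yields \eqref{eqn:portfolio.interpolation} and that $q_n(1) = e^{-\psi(\theta-\phi)}$ — everything else (admissibility, the value along the curve, and the Jensen lower bound) is routine once the convexity of $-\log$ and the affine constraint on $\int_0^1 m(t)\,dt$ are in place. One should also note in passing that the equivalence of \eqref{eqn:action} and \eqref{eqn:action2} is immediate from $q_n(t) = e^{-\psi(\theta - \gamma(t))}$ with $\theta = \gamma(0)$, so the two displayed forms of ${\mathcal{A}}$ are literally the same integral.
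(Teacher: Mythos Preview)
Your proof is correct and follows essentially the same approach as the paper: both arguments apply Jensen's inequality for the convex function $-\log$ to the integral $\int_0^1 -\log\bigl(\tfrac{1}{n}+\dot q_n(t)\bigr)\,dt$, using that the boundary conditions fix $\int_0^1 \dot q_n(t)\,dt = q_n(1)-\tfrac{1}{n}$, and then observe that equality holds precisely when $\dot q_n$ is constant, which is the case for the proposed curve. Your write-up is slightly more explicit in verifying the endpoint conditions and the interpolation formula, but the substance is identical.
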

\begin{proof}
Fix a smooth curve $\gamma: [0, 1] \rightarrow {\mathbb{R}}^{n-1}$ from $\theta$ to $\phi$. Since $-\log$ is convex, by Jensen's inequality we have
\begin{equation} \label{eqn:jensen}
\begin{split}
\int_0^1 &-\log \left( \frac{1}{n} + \dot{q}_n(t)\right) dt \geq -\log \left(\frac{1}{n} + \int_0^1 \dot{q}_n(t) dt \right) \\
&= -\log \left(\frac{1}{n} + q_n(1) - q_n(0)\right) = -\log q_n(1) = \psi(\theta - \phi).
\end{split}
\end{equation}
For the curve defined by \eqref{eqn:minimizing.curve}, $\dot{q}(t) = q(1) - \frac{1}{n}$ is constant and so equality holds in \eqref{eqn:jensen}. Finally \eqref{eqn:portfolio.interpolation} follows by a direct calculation.
\end{proof}

\subsection{Displacement interpolation} \label{sec:interpolation.main}
We work under the following setting. Let $P^{(0)}$ and $P^{(1)}$ be Borel probability measures on ${\mathbb{R}}^{n-1}$. Let $\varphi: \Delta_n \rightarrow {\mathbb{R}}$ be an exponentially concave function, satisfying Assumption \ref{ass:regularity}, such that $F^{(1)} := \nabla^c f$ is an optimal transport map (here $f$ is the $c$-concave function $\varphi + \psi$). Let $\ppi^{(1)}: \Delta_n \rightarrow \Delta_n$ be the portfolio map generated by $\varphi^{(1)} := \varphi$.

Consider the flow $(t, \theta) \mapsto \phi^{(t)}(\theta)$ defined by the minimizing curves \eqref{eqn:minimizing.curve}, i.e.,
\begin{equation} \label{eqn:flow}
\phi_i^{(t)}(\theta) = \theta_i - \log \frac{\ppi^{(t)}_i(\theta)}{\ppi^{(t)}_n(\theta)}, \quad 1 \leq i \leq n - 1, \quad t \in [0, 1],
\end{equation}
where each $\ppi^{(t)} : \Delta_n \rightarrow \Delta_n$ is the portfolio map defined by
\begin{equation} \label{eqn:pi.flow}
\ppi^{(t)} := (1 - t) \left(\frac{1}{n}, \ldots, \frac{1}{n}\right) + t \ppi^{(1)}, \quad t \in [0, 1].
\end{equation}
The following is the main result of this section. It is interesting to note that the displacement interpolation can be interpreted naturally as the linear interpolation between the equal-weighted portfolio and the terminal portfolio.

\begin{theorem}[Displacement interpolation] \label{thm:displacement.interpolation} {\ } 
Consider the setting of Section \ref{sec:interpolation.main}.
\begin{enumerate}
\item[(i)] For each $t \in [0, 1]$, the portfolio map $\ppi^{(t)}$ is generated by the exponentially concave function $\varphi^{(t)}$ on $\Delta_n$ defined by
\begin{equation}
\varphi^{(t)}(p) = (1 - t) \sum_{i = 1}^n \frac{1}{n} \log p_i + t \varphi(p), \quad p \in \Delta_n.
\end{equation}
\item[(ii)] For each $t \in [0, 1]$, let $f^{(t)} = \varphi^{(t)} + \psi$ and let $F^{(t)} = \nabla^c f^{(t)}$. If $\theta$ is distributed according to $P^{(0)}$, then $\theta^{(t)}$ is distributed according to $P^{(t)}$ where
\[
P^{(t)} = \left(F^{(t)}\right)_{\#} P^{(0)}.
\]
Moreover, $F^{(t)}$ is an optimal transport map for the transport problem for $(P^{(0)}, P^{(t)})$.
\item[(iii)] Endow $\Delta_n$ with the geometric structure induced by the L-divergence of $\varphi$. We further assume that the $c$-gradient $F^{(1)} = \nabla^c f: {\mathbb{R}}^{n-1} \rightarrow {\mathbb{R}}^{n-1}$ is surjective. For each $\theta \in {\mathbb{R}}^{n-1}$ fixed, consider the curve $t \mapsto \phi^{(t)}(\theta)$ in dual coordinates. Then the trace of the curve is the dual geodesic joining $\theta$ and $\phi^{(1)}(\theta)$.
\end{enumerate}
\end{theorem}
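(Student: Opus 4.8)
The plan is to prove the three parts in order, leveraging the convexity structure of functionally generated portfolios established earlier. For part (i), I would invoke Example~\ref{ex:fgp.example}(iv): the set of functionally generated portfolios is convex, and the log generating function of a convex combination is the corresponding convex combination of the log generating functions. Since $\left(\tfrac{1}{n},\ldots,\tfrac{1}{n}\right)$ is the equal-weighted portfolio generated by the cross-entropy $\sum_i \tfrac{1}{n}\log p_i$ (Example~\ref{ex:fgp.example}(ii)) and $\ppi^{(1)}$ is generated by $\varphi$, the portfolio $\ppi^{(t)} = (1-t)\left(\tfrac{1}{n},\ldots,\tfrac{1}{n}\right) + t\ppi^{(1)}$ is generated by $\varphi^{(t)} = (1-t)\sum_i \tfrac{1}{n}\log p_i + t\varphi$. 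One should check that $\varphi^{(t)}$ still satisfies Assumption~\ref{ass:regularity}: smoothness is clear, and strict concavity of $\Phi^{(t)} = e^{\varphi^{(t)}} = \left(\prod_i p_i^{1/n}\right)^{1-t}\Phi^t$ follows since it is a geometric mean of two strictly concave positive functions (so its Hessian is strictly negative definite; this is a short direct computation or a reference to the log-concavity argument in \cite{W14}).

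For part (ii), the key observation is that the minimizing curve \eqref{eqn:minimizing.curve} for the pair $(\theta, F^{(1)}(\theta))$ has the explicit form $\gamma_i(t) = \theta_i - \log\frac{\ppi_i^{(t)}(\theta)}{\ppi_n^{(t)}(\theta)}$ because the portfolio interpolation \eqref{eqn:portfolio.interpolation} says precisely that $q(t)$, the portfolio appearing in the definition of the action, equals $\ppi^{(t)}(\theta)$ at the endpoint data. Thus $\phi^{(t)}(\theta) = \gamma(t) = \nabla^c f^{(t)}(\theta) = F^{(t)}(\theta)$ by Theorem~\ref{thm:duality}(i) applied to $\varphi^{(t)}$ (whose $c$-supergradient is given by the portfolio transport map \eqref{eqn:portfolio.transport} for $\ppi^{(t)}$). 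The statement $P^{(t)} = (F^{(t)})_{\#} P^{(0)}$ is then just the definition of pushforward. Optimality of $F^{(t)}$ for $(P^{(0)}, P^{(t)})$ follows from Proposition~\ref{prop:fgp.mcm}(iii): since $\ppi^{(t)}$ is functionally generated, the graph of \eqref{eqn:flow} is $c$-cyclical monotone, hence the induced coupling is optimal. Alternatively, one invokes the intermediate-time optimality built into the Lagrangian formulation: the restriction of an action-minimizing curve to $[0,t]$ is itself action-minimizing, so the concatenation of the cost along $[0,t]$ matches $c(\theta,\phi^{(t)}(\theta))$ after the obvious time rescaling.

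For part (iii), I would compare the curve $t \mapsto \phi^{(t)}(\theta)$ with the explicit dual geodesic \eqref{eqn:dual.geodesic.curve}. Recall from Theorem~\ref{thm:geodesics}(ii) that dual geodesics are, in dual Euclidean coordinates $p^*$, time-changed Euclidean straight lines; equivalently, in dual exponential coordinates $\phi$ they solve \eqref{eqn:dual.geodesic.equation}. The natural route is to show that the dual Euclidean coordinate $p^{*(t)}$ of the point with dual coordinate $\phi^{(t)}(\theta)$ traces a Euclidean line segment. By Definition~\ref{def:coordinate}, $p^{*(t)} = {\bf p}(-\phi^{(t)}(\theta))$, and from \eqref{eqn:flow} one computes $\frac{p_i^{*(t)}}{p_n^{*(t)}} = \frac{\ppi_i^{(t)}(\theta)}{\ppi_n^{(t)}(\theta)} e^{-\theta_i}$, which together with the normalization and the linearity $\ppi^{(t)} = (1-t)\bar\ppi + t\ppi^{(1)}$ shows $p_i^{*(t)}$ is an affine function of $t$ up to a common ($i$-independent) positive factor — exactly the structure of equations \eqref{eqn:Euclidean.line}–\eqref{eqn:dual.geodesic.curve}. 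Hence the trace coincides with the dual geodesic from $\theta$ to $\phi^{(1)}(\theta)$. The surjectivity hypothesis on $F^{(1)}$ ensures that the dual coordinate $\phi^{(1)}(\theta)$ actually corresponds to a point of $\Delta_n$ and that the geodesic is well-defined on all of $[0,1]$.

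The main obstacle I anticipate is part (iii): one must be careful that $\phi^{(t)}(\theta)$ defined by \eqref{eqn:flow} is genuinely the dual coordinate of a point in $\Delta_n$ for every $t\in[0,1]$ (this is where the range ${\mathcal{Y}}'$ of $\nabla^c f$ and the surjectivity assumption enter), and that the time parameter $t$ in the interpolation matches, up to a monotone reparameterization, the time parameter in the dual geodesic equation \eqref{eqn:dual.geodesic.equation} rather than merely sharing the same trace. Verifying the affine-in-$t$ structure of $p^{*(t)}$ and then appealing to the uniqueness of geodesics (as in the proof of Theorem~\ref{thm:geodesics}) handles the trace claim cleanly; the reparameterization is then automatic because both curves are monotone time-changes of the same Euclidean segment.
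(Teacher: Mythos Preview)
Your proposal is correct and follows essentially the same approach as the paper: parts (i) and (ii) invoke exactly the same results (Example~\ref{ex:fgp.example}(iv) and Proposition~\ref{prop:fgp.mcm}), and for (iii) the paper rewrites $e^{-\phi_i^{(t)}(\theta)}$ as a convex combination $(1-h(t))e^{-\theta_i} + h(t)e^{-\phi_i^{(1)}(\theta)}$ to match the explicit dual geodesic formula~\eqref{eqn:dual.geodesic.curve} directly, which is the same computation as your passage through the dual Euclidean coordinate $p^{*(t)}$. One small clarification: the surjectivity hypothesis is used so that $\mathcal{Y}' = \mathbb{R}^{n-1}$, ensuring that \emph{every} $\phi^{(t)}(\theta)$ (not just $\phi^{(1)}(\theta)$, which is in $\mathcal{Y}'$ by definition) lies in the range of the dual coordinate system.
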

\begin{proof}
(i) Follows directly from Example \ref{ex:fgp.example}(iv).

(ii) It is clear that $(\theta, F^{(t)}(\theta))$ is a coupling of $(P^{(0)}, P^{(t)})$. By Proposition \ref{prop:fgp.mcm}, the graph of the map $F^{(t)}$ is $c$-cyclical monotone. This proves that $F^{(t)}$ is an optimal transport map.

(iii) We write \eqref{eqn:flow} in the form
\begin{equation*}
\begin{split}
e^{-\phi_i^{(t)}} &= e^{-\theta_i} \frac{(1 - t)\frac{1}{n} + t\ppi_i(\theta)}{(1 - t)\frac{1}{n} + t\ppi_n(\theta)} \\
  &= \frac{(1 - t) \frac{1}{n}}{(1 - t)\frac{1}{n} + t \ppi_n(\theta)} e^{-\theta_i} + \frac{t\ppi_n(\theta)}{(1 - t)\frac{1}{n} + t \ppi_n(\theta)} e^{-\theta_i + \log \frac{\ppi_i(\theta)}{\ppi_n(\theta)}} \\
  &=: (1 - h(t)) e^{-\theta_i} + h(t) e^{-\phi^{(1)}_i(\theta)}.
\end{split}
\end{equation*}
By \eqref{eqn:dual.geodesic.curve} we see that $t \mapsto \phi^{(t)}$ is a time change of a dual geodesic. The surjectivity assumption guarantees that the curve lies within ${\mathcal{Y}}'$, the range of the dual coordinate system.
\end{proof}

\subsection{Another interpolation}
From the financial perspective there is another natural interpolation, namely the linear interpolation between the market portfolio $\mu$ (Example \ref{ex:fgp.example}(i)) and the portfolio $\ppi$:
\begin{equation} \label{eqn:market.interpolation}
\ppi^{(t)} = (1 - t) \ppi + t \mu.
\end{equation}
The corresponding log generating function is $\varphi^{(t)} = (1 - t) \varphi$. From the transport perspective, the market portfolio corresponds to the trivial transport map $F(\theta) \equiv 0$ (recall in Proposition \ref{prop:fgp.mcm}(iii) that the portfolio has exponential coordinate given by $\theta - F(\theta)$). By the argument of Theorem \ref{thm:displacement.interpolation} we have the following result.

\begin{proposition}
Consider the geometric structure induced by $\varphi$ and assume that the range of the dual coordinate system is ${\mathbb{R}}^{n-1}$. Consider the flow $(t, \theta) \mapsto \phi^{(t)}(\theta)$ in \eqref{eqn:flow} where $\ppi^{(t)}$ is given by the interpolation \eqref{eqn:market.interpolation}. Then for each $\theta$, in dual coordinates, the trace of the curve $t \mapsto \phi^{(t)}(\theta)$ is a time change of the dual geodesic from $\phi^{(0)}(\theta)$ to $0$.
\end{proposition}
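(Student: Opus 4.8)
The plan is to mimic, almost verbatim, the computation in part (iii) of the proof of Theorem \ref{thm:displacement.interpolation}, replacing the interpolation \eqref{eqn:pi.flow} (between the equal-weighted portfolio and $\ppi^{(1)}$) by the interpolation \eqref{eqn:market.interpolation} (between $\ppi$ and the market portfolio $\mu$). First I would fix $\theta\in{\mathbb{R}}^{n-1}$ and write out the flow \eqref{eqn:flow} explicitly for $\ppi^{(t)} = (1-t)\ppi + t\mu$. Since $\mu(p)\equiv p$ has exponential coordinate $\theta$ itself (equivalently $F(\theta)\equiv 0$), and $\ppi$ has exponential coordinate $\theta - F(\theta) = \phi^{(0)}(\theta)$ where $F = \nabla^c f$, one has $\ppi_i^{(t)}(\theta) = (1-t)\ppi_i(\theta) + t p_i(\theta)$. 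Substituting into $e^{-\phi_i^{(t)}} = e^{-\theta_i}\,\ppi_i^{(t)}(\theta)/\ppi_n^{(t)}(\theta)$ and splitting the numerator, I expect to obtain exactly the convex-combination form
\begin{equation*}
e^{-\phi_i^{(t)}(\theta)} = (1 - h(t))\, e^{-\phi_i^{(0)}(\theta)} + h(t)\, e^{-\theta_i},
\end{equation*}
where $h(t) = t p_n(\theta) / \big((1-t)\ppi_n(\theta) + t p_n(\theta)\big)$ is a smooth increasing reparametrization of $[0,1]$ onto itself with $h(0)=0$, $h(1)=1$. Note that here $e^{-\theta_i}$ plays the role of the terminal endpoint, consistent with $\phi^{(1)}(\theta) = \theta$ (i.e. the dual coordinate of the market-portfolio endpoint is $\theta$ itself), and $\phi^{(0)}(\theta)$ is the initial endpoint; since $\phi^{(1)}(\theta)=\theta$ corresponds in the statement to the point "$0$" in the sense that its $c$-supergradient image is $0$, this matches the claim that the dual geodesic runs from $\phi^{(0)}(\theta)$ to $0$.

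Next I would invoke the characterization \eqref{eqn:dual.geodesic.curve} of dual geodesics in dual Euclidean coordinates: a curve whose dual coordinates $\phi(t)$ satisfy $e^{-\phi_k(t)} = (1-h(t)) e^{-\phi_k^q} + h(t) e^{-\phi_k^p}$ for some increasing $h$ with $h(0)=0$, $h(1)=1$ is, up to time change, the dual geodesic joining the two endpoints. This is precisely the form obtained above, so the trace of $t\mapsto\phi^{(t)}(\theta)$ is the dual geodesic from $\phi^{(0)}(\theta)$ to $0$. The surjectivity (range $={\mathbb{R}}^{n-1}$) hypothesis on the dual coordinate system is used exactly as in Theorem \ref{thm:displacement.interpolation}(iii): it guarantees that the interpolating curve stays inside ${\mathcal{Y}}'$, so that "$0$" and every $\phi^{(t)}(\theta)$ are legitimate dual coordinates of points of $\Delta_n$ and the dual geodesic equation \eqref{eqn:dual.geodesic.equation} applies along the whole curve.

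The routine obstacle — really the only computational content — is verifying the algebraic identity that recasts $e^{-\phi_i^{(t)}}$ as the asserted convex combination and checking that the resulting $h(t)$ is strictly increasing from $0$ to $1$; this is a one-line manipulation essentially identical to the displayed computation in the proof of Theorem \ref{thm:displacement.interpolation}(iii), and $h' > 0$ follows since $\ppi_n(\theta), p_n(\theta) > 0$. The one genuine point requiring a sentence of care is the bookkeeping of endpoints: making sure that "the market portfolio corresponds to $F(\theta)\equiv 0$" is correctly translated into "$\phi^{(1)}(\theta) = \theta$, whose image under $\nabla^c f^{(1)}$-type identification is the origin of ${\mathcal{Y}}$", so that the statement's phrase "dual geodesic from $\phi^{(0)}(\theta)$ to $0$" is unambiguous. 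Everything else is a direct appeal to results already proved: \eqref{eqn:dual.geodesic.curve}, Theorem \ref{thm:geodesics}(ii), and Example \ref{ex:fgp.example}(i),(iv).
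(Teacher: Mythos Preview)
Your approach is exactly what the paper intends; the paper itself gives no separate proof and simply writes ``By the argument of Theorem \ref{thm:displacement.interpolation}''. There is, however, an algebraic slip in your displayed convex combination that then forces the confused ``bookkeeping'' paragraph. From $p_i/p_n=e^{\theta_i}$ one has $p_i e^{-\theta_i}=p_n$, so the computation actually gives
\[
e^{-\phi_i^{(t)}(\theta)}
= \frac{(1-t)\ppi_n(\theta)\,e^{-\phi_i^{(0)}(\theta)} + t\,p_n\cdot e^{0}}{(1-t)\ppi_n(\theta)+t\,p_n}
= (1-h(t))\,e^{-\phi_i^{(0)}(\theta)} + h(t)\cdot e^{0},
\]
with $h(t)=t\,p_n/\big((1-t)\ppi_n(\theta)+t\,p_n\big)$. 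Hence $\phi^{(1)}(\theta)=0$ \emph{directly}, matching the statement via \eqref{eqn:dual.geodesic.curve} with no reinterpretation needed; the sentence ``$\phi^{(1)}(\theta)=\theta$, whose image\ldots is the origin'' is an artifact of the miscalculation and should be dropped. (A second harmless slip: the exponential coordinate of $\ppi(\theta)$ is $\theta-F(\theta)$, which is not equal to $\phi^{(0)}(\theta)=F(\theta)$.) With these corrections your argument is complete and coincides with the paper's.
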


\appendix

\section{Technical proofs}\label{sec:appendix}

\begin{proof}[Proof of Theorem \ref{thm:duality}]
In this proof we treat $\theta$ and $\phi$ as independent variables.

We prove (i) and (ii) together. We begin by observing that
\begin{equation} \label{eqn:weight.as.derivative}
\frac{\partial}{\partial \theta_i} f(\theta) = \ppi_i(\theta), \quad 1 \leq i \leq n - 1.
\end{equation}
To see this, write $p_i = e^{\theta_i - \psi(\theta)}$. Switching coordinates and using the chain rule, we have
\begin{equation*}
\begin{split}
\frac{\partial}{\partial \theta_i} f(\theta) &= \sum_{j = 1}^n \frac{\partial \varphi}{\partial p_j}(p)  (-p_ip_j) + \frac{\partial \varphi}{\partial p_i}(p)p_i + p_i \\
&= p_i \left(1 + \frac{\partial \varphi}{\partial p_i}(p) - \sum_{j = 1}^n p_j \frac{\partial \varphi}{\partial p_j}(p)\right).
\end{split}
\end{equation*}
From \eqref{eqn:fgp.weights} and a bit of computation, we see that this equals $\ppi_i(\theta)$.

Consider the $c$-transform of $f$ given by
\begin{equation} \label{eqn:c.transform.f.1}
f^*(\phi) = \inf_{\theta \in {\mathcal{X}}} \left(\psi(\theta - \phi) - f(\theta)\right).
\end{equation}

Differentiating $\psi(\theta - \phi) - f(\theta)$ and using \eqref{eqn:weight.as.derivative}, we see that $\theta \in {\mathcal{X}}$ attains the infimum in \eqref{eqn:c.transform.f.1} if and only if
\[
\frac{e^{\theta_i - \phi_i}}{\sum_{j = 1}^n e^{\theta_j - \phi_j}} = \ppi_i(\theta), \quad i = 1, \ldots, n - 1.
\]
Rearranging, we have
\begin{equation} \label{eqn:theta.dual}
\phi_i = \theta_i - \log \frac{\ppi_i(\theta)}{\ppi_n(\theta)}, \quad i = 1, \ldots, n - 1.
\end{equation}
This proves that equality holds in
\begin{equation} \label{eqn:fenchel.ineq}
f(\theta) + f^*(\phi) \leq \psi(\theta - \phi)
\end{equation}
if and only if $\theta$ and $\phi$ satisfies the relation \eqref{eqn:theta.dual}. In particular, for $\theta \in {\mathcal{X}}$ the $c$-supergradient $\nabla^c f(\theta)$ is given by \eqref{eqn:theta.dual}.

Next we we prove that the minimizer in \eqref{eqn:c.transform.f.1}, if exists, is unique. Consider instead maximization of the quantity
\[
e^{f(\theta) - \psi(\theta - \phi)} = e^{\varphi(\theta) + \psi(\theta) - \psi(\theta - \phi)} = \Phi(\theta) \frac{e^{\psi(\theta)}}{e^{\psi(\theta - \phi)}}.
\]
Expanding and switching to Euclidean coordinates, this equals
\begin{equation} \label{eqn:quotient}
\Phi(p) \frac{\sum_{i = 1}^n e^{\theta_i}}{\sum_{i = 1}^n e^{\theta_i - \phi_i}} = \Phi(p) \frac{1}{\sum_{i = 1}^n a_i p_i},
\end{equation}
where $a_i = e^{-\phi_i} > 0$. Being the quotient of a strictly concave function and an affine function, the right hand side of \eqref{eqn:quotient} is strictly quasi-concave, i.e., its superlevel sets are strictly convex (see \cite[Example 3.38]{BV04}). This shows that the minimizer $\theta$ in \eqref{eqn:c.transform.f.1} is unique if it exists.

Let $\phi \in {\mathcal{Y}}'$. Then there exists unique $\theta \in {\mathcal{X}}$ such that equality holds in \eqref{eqn:fenchel.ineq} and $\phi = \nabla^c f(\theta)$. In particular, the $c$-supergradient $\partial^c f^*(\phi)$ is $\theta$ and $\nabla^c f^* (\nabla^c f(\theta)) = \theta$. This completes the proof of (i) and (ii).

Next we prove that $\nabla^c f^* = (\nabla^c f)^{-1}$ is smooth, so that $\nabla^c f: {\mathcal{X}} \rightarrow {\mathcal{Y}}'$ is a diffeomorphism. Since $\nabla^c f: {\mathcal{X}} \rightarrow {\mathcal{Y}}$ is smooth and injective, by the inverse function theorem it suffices to show that the Jacobian of $\nabla^c f$ is invertible on ${\mathcal{X}}$. Invertibility of the Jacobian follows from \eqref{eqn:g.matrix} and the fact that $\left(g_{ij}(\xi)\right)$ is strictly positive definite.

Finally, by Fenchel's identity we can express $f^*$ in the form
\[
f^*(\phi) = c(\nabla^c f^*(\phi), \phi) - f(\nabla^c f^*(\phi)).
\]
Since $\nabla^c f^*$ and the cost function $c$ are smooth, we see that $f^*$ is smooth as well.
\end{proof}

\begin{proof}[Proof of Lemma \ref{lem:gradient}]
(i) To prove the first formula in \eqref{eqn:Riemannian.gradient.primal}, we compute, using \eqref{eqn:g.primal.inverse} and \eqref{eqn:T.partial.i},
\begin{equation*}
\begin{split}
& \left(\grad \ T\left( \cdot \mid \theta^p\right)(\theta^q)\right)_i\\
 &=  \sum_{j = 1}^{n - 1} g^{ij}(\theta^q) \frac{\partial}{\partial \theta_j^q} T\left(\cdot \mid \theta^p\right)(\theta^q) \\
  &= \sum_{j = 1}^{n-1} \left(\frac{1}{\ppi_j(\theta^q)} \frac{\partial \theta_i}{\partial \phi_j}(\theta^q) + \frac{1}{\ppi_n(\theta^q)} \sum_{k = 1}^{n-1} \frac{\partial \theta_i}{\partial \phi_k}(\theta^q)\right) \left( \Pi_j(\theta^q, \theta^p) - \ppi_j(\theta^q)\right) \\
  &= \sum_{j = 1}^{n - 1} \left( \frac{\Pi_j(\theta^q, \theta^p)}{\ppi_j(\theta^q)} - \frac{\Pi_n(\theta^q, \theta^p)}{\ppi_n(\theta^q)}\right) \frac{\partial \theta_i}{\partial \phi_j}(\theta^q).
\end{split}
\end{equation*}

For the second formula, we first prove a

\medskip

\noindent
{\it Claim.} We have
\begin{equation} \label{eqn:T.derivative.alternative}
\frac{\partial}{\partial \theta_j^q} T \left( \theta^r \mid \theta^q\right) = \sum_{\ell = 1}^{n-1} \frac{\partial \phi_{\ell}}{\partial \theta_j^q}(\theta^q)(\ppi_{\ell}(\theta^q) - \Pi_{\ell}(\theta^r, \theta^q)).
\end{equation}
To see this, we use \eqref{eqn:T.partial.i.prime}, \eqref{eqn:pi.derivative}, and compute as follows:
\begin{equation*}
\begin{split}
& \frac{\partial}{\partial \theta_j^q} T\left(\theta^r \mid \theta^q\right) \\
&= -\Pi_j(\theta^r, \theta^q) + \ppi_j(\theta^q) + \sum_{\ell = 1}^n \frac{1}{\ppi_{\ell}(\theta^q)} \frac{\partial \ppi_{\ell}}{\partial \theta_j^q}(\theta^q) \Pi_{\ell}(\theta^r, \theta^q) \\
&= -\Pi_j(\theta^r, \theta^q) + \ppi_j(\theta^q) + \\
&\quad \quad \sum_{\ell = 1}^n \left(\delta_{\ell j} - \ppi_j(\theta^q) - \left(\frac{\partial \phi_{\ell}}{\partial \theta_j^q}(\theta^q) - \sum_{m = 1}^{n-1} \ppi_m(\theta^q) \frac{\partial \phi_m}{\partial \theta_j^q}(\theta^q)\right)\Pi_{\ell}(\theta^r, \theta^q)\right)\\
&= \sum_{\ell = 1}^{n-1} \frac{\partial \phi_{\ell}}{\partial \theta_j^q}(\theta^q)(\ppi_{\ell}(\theta^q) - \Pi_{\ell}(\theta^r, \theta^q)).
\end{split}
\end{equation*}

Now we compute, using Theorem \ref{thm:Riemannian.metric} and the symmetry of $g^{ij}$,
\begin{equation*}
\begin{split}
& \left( \grad \ T\left(r \mid \cdot\right)(q)\right)_i \\
&= \sum_{j = 1}^{n - 1} g^{ij}(\theta^q) \frac{\partial}{\partial \theta_j^q} T \left( \theta^r \mid \cdot\right)(\theta^q) \\
&= \sum_{j = 1}^{n-1} \left(\frac{1}{\ppi_i(\theta^q)} \frac{\partial \theta_j}{\partial \phi_i}(\phi^q) + \frac{1}{\ppi_n(\theta^q)} \sum_{k = 1}^{n-1} \frac{\partial \theta_j}{\partial \phi_k}(\phi^q)\right)  \cdot  \\
&\quad \quad \sum_{\ell = 1}^{n-1} \frac{\partial \phi_{\ell}}{\partial \theta_j^q}(\theta^q)(\ppi_{\ell}(\theta^q) - \Pi_{\ell}(\theta^r, \theta^q)) \\
&= \sum_{\ell = 1}^{n-1} (\ppi_{\ell}(\theta^q) - \Pi_{\ell}(\theta^r, \theta^q))  \cdot \\
&\quad \quad \sum_{j = 1}^{n-1} \left( \frac{1}{\ppi_i(\theta^q)} \frac{\partial \phi_{\ell}}{\partial \theta_j^q}(\phi^q) \frac{\partial \theta_j}{\partial \phi_i}(\phi^q) + \frac{1}{\ppi_n(\theta^q)} \sum_{k = 1}^{n-1} \frac{\partial \phi_{\ell}}{\partial \theta_j^q}(\phi^q) \frac{\partial \theta_j}{\partial \phi_k}(\phi^q) \right) \\
&= \sum_{\ell = 1}^{n-1} (\ppi_{\ell}(\theta^q) - \Pi_{\ell}(\theta^r, \theta^q))\left(\frac{1}{\ppi_i(\theta^q)} \delta_{\ell i} + \frac{1}{\ppi_n(\theta^q)}\right) \\
&= -\frac{\Pi_i(\theta^r, \theta^q)}{\ppi_i(\theta^q)} + \frac{\Pi_n(\theta^r, \theta^q)}{\ppi_n(\theta^q)}.
\end{split}
\end{equation*}
In the second last equality we used \eqref{eqn:jacobian.identity}. The proof of (ii) is similar.
\end{proof}

\section*{Acknowledgements}
The authors would like to thank Prof.~Robin Graham for helpful discussions. We also thank the anonymous referee for suggestions which improved the paper.

\bibliographystyle{imsart-nameyear}
\bibliography{rebalance}

%\begin{thebibliography}{9}
%\bibitem{r1}
%\textsc{Billingsley, P.} (1999). \textit{Convergence of
%Probability Measures}, 2nd ed.
%Wiley, New York.
%\MR{1700749}
%\end{thebibliography}

\end{document}